\theoremstyle{plain}
\newtheorem{theorem}{Theorem}[section]
\newtheorem*{theorem1}{Theorem 1}
\newtheorem*{theorem2}{Theorem 2}
\newtheorem{proposition}[theorem]{Proposition}
\newtheorem{lemma}[theorem]{Lemma}
\newtheorem{corollary}[theorem]{Corollary}
\theoremstyle{definition}
\newtheorem{definition}[theorem]{Definition}
\newtheorem{convention}[theorem]{Convention}
\newtheorem{notation}[theorem]{Notation}
\newtheorem{remark}[theorem]{Remark}
\theoremstyle{remark}
\newtheorem*{claim}{Claim}
\DeclareMathOperator{\Mod}{Mod}
\DeclareMathOperator{\Sent}{Sent}
\DeclareMathOperator{\Str}{Str}
\DeclareMathOperator{\To}{\Rightarrow}
\DeclareMathOperator{\density}{density}
\DeclareMathOperator{\cl}{CL}
\DeclareMathOperator{\Discrete}{Discrete}
\DeclareMathOperator{\lukimp}{\rightarrow_{L}}
\renewcommand{\phi}{\varphi}
\newcommand{\nmodels}{\nvDash}
\begin{document}

\title[Omitting uncountable types]
{Omitting uncountable types, \\
and the strength of $[0,1]$-valued logics}

\author[X. Caicedo]{Xavier Caicedo}

\address{Department of Mathematics\\
  Universidad de los Andes\\
  Apartado Aereo 4976\\
  Bogot\'{a}, Colombia}

\email{xcaicedo@uniandes.edu.co}

\author[J. N. Iovino]{Jos\'e N. Iovino}

\address{Department of Mathematics\\
  The University of Texas at San Antonio\\
  One UTSA Circle\\
  San Antonio, TX 78249-0664\\
  USA}

\email{iovino@math.utsa.edu}

\date{\today}
%\keywords{...}
\subjclass[2000]{03C95, 
%Abstract model theory
03C90,
%Non classical models
03B50,
%Many-valued logic
03B52,
%Fuzzy logic
54E52.
%Baire category, Baire spaces
 }
%\keywords{continuous model theory, continuous logic, {\L}ukasziewicz logic, {\L}ukasziewicz-Pavelka logic, omitting types theorem}

\begin{abstract}
We study $[0,1]$-valued logics that are closed under the  \L ukasiewicz-Pavelka connectives; our primary examples are the the continuous logic framework of Ben Yaacov and Usvyatsov~\cite{Ben-Yaacov-Usvyatsov:2010} and the {\L}ukasziewicz-Pavelka logic itself. The main result of the paper is a characterization of these logics in terms of a model-theoretic property, namely, an extension of the omitting types theorem to uncountable languages. 
\end{abstract}

\maketitle

\section*{Introduction}

In this paper we deal with the model theory $[0,1]$-valued logics. We focus on logics that are closed under the \L ukasiewicz-Pavelka connectives. Our primary examples are the continuous logic framework of Ben Yaacov and Usvyatsov~\cite{Ben-Yaacov-Usvyatsov:2010} and the \L ukasiewicz-Pavelka logic itself~\cite{Pavelka:1979I,Pavelka:1979II,Pavelka:1979III} (see also Section~5.4 of~\cite{Hajek:1998}). The main result of the paper is a characterization of these logics in terms of a model-theoretic property, namely, an extension of the omitting types theorem to uncountable languages. 

If $\lambda$ is an uncountable cardinal and $T$ is a theory of cardinality~$\le\lambda$, we will say that a partial type $\Sigma(x)$ in a  logic $\mathcal{L}$ is \emph{$\lambda$-principal} if there exists a set of formulas~$\Phi(x)$ of cardinality~$<\lambda$  such that $T\cup\Phi(x)$ is consistent and $T\cup\Phi(x) \models\Sigma(x)$. 
 
 A logic $\mathcal{L}$ satisfies the \emph{$\lambda$-Omitting Types Property} if 
  whenever $T$ is a consistent theory of cardinality $\leq \lambda$  and  $\{\Sigma_{j}(x)\}_{j < \lambda}$ is a set of types that are not $\lambda$-principal over $T$ there is a model of $T$ that omits each $\Sigma_{j}(x)$.
 
In the paper we work within a  logic $\mathcal L$ whose semantics is given by the class of continuous metric structures. We prove that the uncountable Omitting Types Property defined above characterizes $\mathcal L$. Then, analogous characterizations of \L ukasiewicz-Pavelka logic and the continuous logic framework of ~\cite{Ben-Yaacov-Usvyatsov:2010}  follow, by restricting $\mathcal L$ to specific classes of structures.

The sentences of $\mathcal{L}$ are $[0,1]$-valued. The connectives of $\mathcal L$ are the \L ukasiewicz implication ($\phi\to\psi=\min\{1-\phi+\psi,1\}$), and the Pavelka rational constants, i.e., for each rational $r$ in the closed interval $[0,1]$ a constant connective with value $r$.  The quantifiers are $\forall$ and $\exists$ (only one of them is needed). 

We observe that the restriction of $\mathcal L$ to the class of 1-Lipschitz structures is predicate \L ukasiewicz-Pavelka logic, and its restriction to the class of complete structures yields the continuous logic framework of~\cite{Ben-Yaacov-Usvyatsov:2010}.
  
 In the first part of the paper we prove the following result:
 
 \begin{theorem1}\label{T:omitting types characterization}
 $\mathcal{L}$ satisfies the $\lambda$-Omitting Types Property for every uncountable cardinal~$\lambda$. 
 \end{theorem1} 
 
In the second part we show that this property characterizes $\mathcal{L}$:

\begin{theorem2}
Let $\mathcal{L}'$ be a $[0,1]$-valued logic  that extends $\mathcal{L}$ and satisfies the following properties:
\begin{itemize}
\item
The $\lambda$-Omitting Types Property for every uncountable cardinal~$\lambda$,
\item
Closure under the of \L ukasiewicz-Pavelka connectives \emph{(see below)} and the existential quantifier,
\item
Every continuous metric structure is logically equivalent in $\mathcal{L}'$ to its metric completion.
\end{itemize}
Then every sentence in $\mathcal{L}'$ is is a uniform limit of sentences in $\mathcal{L}$.
\end{theorem2}

Theorem~2 is the main result of the paper, and generalizes a characterization of first-order logic due to Lindstr\"om~\cite{Lindstrom:1978}. By restricting Theorem~2 to the class of 1-Lipschitz structures
we obtain a characterization of \L ukasiewicz-Pavelka logic, and by restricting it to the class of complete structures we obtain an analogous characterization of continuous logic. See Corollary~\ref{C: maximality}.

The latter case uses a form of the $\lambda$-Omitting Types Property that asserts that the type-omitting structure is complete. This version requires a stronger notion of type principality, but it is a direct consequence of the $\lambda$-Omitting Types Property of $\mathcal L$.

  The logic $\mathcal L$ is an instance of the general concept of continuous logic introduced by Chang and Keisler in~\cite{Chang-Keisler:1966}. Nevertheless, in order to be consistent with the use of the term ``continuous logic'' in recent literature, in the paper we refer to $\mathcal L$ as ``basic continuous logic''. 

Our proof of the $\lambda$-Omitting Types Property is based on a general version of the Baire Category Theorem (Proposition~\ref{Proposition:UncountableVersionOfBaireCategoryTheorem}). The proof covers at once  the uncountable case discussed above and the case $\lambda=\omega$. (See Theorem~\ref{Theorem:GeneralOmittingTypesTheorem}.) The countable is not new; omitting types theorems for $[0,1]$-valued logics over countable languages have been proved by Murinov{\'a}-Nov\'{a}k~\cite {Murinova-Novak:2006} (for  {\L}ukasziewicz-Pavelka logic) and by Henson~\cite{Ben-Yaacov-Usvyatsov:2007, Ben-Yaacov-Berenstein-Henson-Usvyatsov:2008} (for complete metric structures).  
 
Our approach is topological. The usefulness of topological methods to study model-theoretic properties of abstract logics is at the heart of several papers by the first author (see, for example,~\cite{Caicedo:1993,Caicedo:1995,Caicedo:1999}). The topological approach followed in those papers is particularly well-suited for settings such as those considered here, where the logics at hand do not have negation in the classical sense. In such settings the assumption that the space of structures is  topologically  regular serves as the ``correct'' replacement of the classical negation.  (Indeed, the collaboration between both authors originated with the realization that the concept of regular logic, which was isolated by the first author, is equivalent to the concept of logics with weak negation that was introduced by the second author in~\cite{Iovino:2001}.) Utilizing these ideas, the first author has proved topological versions of Lindstr\"om's First Theorem for first-order logic~\cite {Caicedo:201?}.

The paper is self-contained; no previous familiarity with abstract model theory, with \L ukasiewicz logic, or with continuous logic is presumed. The basic definitions are given in Section~\ref{Section:StructuresAndLogics}. Section~\ref{Section:TheLogicalTopology} introduces the topological preliminaries. Section~\ref{Section:A General Omitting Types Theorem} is devoted to the proof of the $\lambda$-Omitting Types Property, and Section~\ref{Section:TheMainTheorem} contains the proof the main theorem, Theorem~2.

This paper evolved from a manuscript prepared by Jonathan Brucks within his master's degree research, under the supervision of both authors, during the academic year 2010--2011. 
%The second author is thankful to Chris Eagle, Eduardo Due\~nez and Frank Tall  for valuable discussions.

This collaboration was partially supported by NSF Grant DMS-0819590. The first author received support from the Faculty of Sciences Research Fund at Universidad de los Andes.

\section{Structures and Logics} \label{Section:StructuresAndLogics}

\subsection{Continuous metric structures} \label{subsection:Structures}

Although, for simplicity, we will focus on $[0,1]$-valued continuous metric structures, our results regarding continuous logic may be easily extended to bounded $\mathbb{R}$-valued structures, or even unbounded structures, if we decompose them into bounded ones (see Section~\ref{S: beyond [0,1]}.), thus we prefer to give the more general definition of continuous metric structure.

\begin{definition} \label{Definition:MetricStructure}
A \emph{continuous metric structure}\index{structure|ii} (or simply a \emph{structure}) $M$ consists of the following items:
\begin{enumerate}
\item A family $(M_{i},d_{i})_{i \in I}$ of metric spaces.

\item A collection of functions of the form
\[
	F: M_{i_{1}} \times \dots \times M_{i_{n}} \to M_{i_{0}},
\]
called the \emph{operations}\index{structure!operation|ii} of the structure, each of which is uniformly continuous on every bounded subset of its domain.

\item A collection of real-valued functions of the form
\[
	R: M_{i_{1}} \times \dots \times M_{i_{n}} \to \mathbb{R},
\]
called the \emph{predicates}\index{structure!predicate|ii} of the structure, each of which is uniformly continuous on every bounded subset of its domain.
\end{enumerate}
\end{definition}

The \emph{constants}\index{structure!constant|ii} of a structure are the $0$-ary operations of the structure.

The metric spaces $M_{i}$ are called the \emph{sorts}\index{structure!sort|ii} of $M$, and we say that $M$ is \emph{based on} $(M_{i})_{i \in I}$.  If $M$ is based on $(M_{i})_{i \in I}$, we will say that a structure is \emph{discrete}\index{structure!discrete|ii} if for each $i\in I$  the distinguished metric on the sort $M_i$ is the discrete metric, and all the predicates of $M$ take values in $\{0,1\}$. Note that if $M$ is a discrete structure, the uniform continuity requirement for the operations and predicates of $M$ is superfluous.  A structure is \emph{complete}\index{structure!complete|ii} if all of its sorts are complete metric spaces.

Let $M$ be a structure based on $(M_{i})_{i \in I}$.  If $(F_{j})_{j \in J}$ is a list of the operations of $M$, and $(R_{k})_{k \in K}$ is a list of the predicates of $M$, we may write
\[
	M = (M_{i},F_{j},R_{k})_{i \in I, j \in J, k \in K}.
\]

%If $M$ is based on $(M_{i})_{i \in I}$ and $a \in M_{i}$ for some $i \in I$, we refer to $a$ as an \emph{element}\index{structure!element|ii} of $M_{i}$.  
If $a_{1},\dots,a_{n} \in M_{i}$, we denote by $\bar{a}$ the list of elements $a_{1},\dots,a_{n}$ and write simply $\bar{a} \in M_{i}$.   When the context allows it, we also denote by $\bar{a}$ denote the tuple $(a_{1},\dots,a_{n})$ by $\bar{a}$.  If it becomes necessary to refer to the length of a list or tuple of elements $\bar{a}$, we denote it by $\ell(\bar{a})$.

Examples of non-discrete structures include normed spaces, Banach algebras, Banach lattices, and operator spaces.  For more examples, see \cite[Examples 2.2]{Henson-Iovino:2002} and \cite[Examples 2.1]{Ben-Yaacov-Berenstein-Henson-Usvyatsov:2008}.

If a continuous metric structure $M$ is based on a single metric space $(M,d)$, we say that $M$ is a \emph{one-sorted structure}\index{structure!one-sorted|ii}.  In this case we call $(M,d)$ the \emph{universe}\index{structure!universe|ii} of $M$. For simplicity, we shall restrict our attention to one-sorted metric structures.   

Note that, informally, we use the same letter to denote a structure and its universe.  We follow this convention throughout the paper.

Let $M$ be a continuous metric structure, and let $N$ be a metric space that extends $M$ and contains $M$ as a dense subset. By the uniform continuity condition in Definition~\ref{Definition:MetricStructure}, each operation or predicate of $M$ has a unique extension to an operation or predicate of $N$.  The \emph{completion}\index{structure!completion|ii} of a continuous metric structure $(M,F_{i},R_{j})_{i \in I, j \in J}$ is the structure $(\overline{M},\bar{F}_{i},\bar{R}_{j})_{i \in I, j \in J}$, where $\overline{M}$ is the metric completion of $M$ and $\bar{F}_{i},\bar{R}_{j}$ are the unique extensions of $F_{i},R_{j}$ from the appropriate powers of $M$ to the corresponding powers of $\overline{M}$.

We call a one-sorted continuous metric structure \emph{bounded}\index{structure!bounded|ii} if its universe is bounded. 
%If a continuous metric structure $M$ is not one-sorted, we say that $M$ is \emph{many-sorted}\index{structure!many-sorted|ii}.  

\subsection{Signatures} \label{subection:Signatures}

In order to treat metric structures $M$ model-theoretically, it is convenient to have a formal way of indexing the operations and predicates of $M$, and specifying moduli of uniform continuity for them; this is a \emph{signature} for $M$.

\begin{definition} \label{Definition:Signature}
Let $M$ be a bounded continuous metric structure with metric $d$. A \emph{signature} for $M$ is a pair $\mathbf{S}=(S,\mathcal{U})$, where:
\begin{enumerate}
\item
$S$ is a first-order vocabulary consisting of the following items: for each operation
 $F: M^{n} \to M$ of $M$ (respectively, predicate $R: M^{n} \to \mathbb{R}$ of $M$), a pair of the form  $(f,n)$ (respectively, $(P,n)$), where $f$ and $P$ are syntactic symbols called $n$-ary \emph{operation
symbol} and \emph{$n$-ary predicate symbol}, respectively. If $n=0$, $f$ is called a \emph{constant symbol}. In this context, $F$ is denoted $f^M$ and called the \emph{interpretation of $f$} and $R$  is
denoted $P^M$ and similarly called the \emph{interpretation of $P$}.
\item
$\mathcal{U}$ is a family of \emph{uniform continuity moduli} for the symbols in $S$, that is: for
each $n$-ary function symbol $f$ or predicate symbol $P$ of $S$ an associated function
$\delta:\mathbb{Q}\cap(0,1)\to \mathbb{Q}\cap(0,1)$ such that if $\bar a=a_1,\dots,a_n\in M$ and $\bar b=b_1,\dots,b_n\in M$,
\[
\sup_{1\le i\le n}d(a_i,b_i)< \delta(\epsilon)
\quad\Rightarrow\quad
d(f^M(\bar a),f^M(\bar b))\le\epsilon
\]
in the first case and
\[
\sup_{1\le i\le n}d(a_i,b_i)< \delta(\epsilon)
\quad\Rightarrow\quad
|P^M(\bar a)-P^M(\bar b)|\le\epsilon
\]
in the second.
\end{enumerate}
If $\mathbf{S}=(S,\mathcal{U})$ is a signature for $M$, we say  that $M$ is an \emph{$S$-structure} or an \emph{$\mathbf{S}$-structure}, depending on whether the uniform continuity moduli of $\mathcal{U}$ need to be made explicit for the context.

%In naming signatures, we often suppress arities and uniform continuity moduli, preferring the notational simplicity of dealing only with the vocabulary $S$.
\end{definition}

If $S,S'$ are vocabularies, we write $S \subseteq S'$ if every operation and predicate symbol in $S$ is in $S'$ with the same arity.  In this case we say that $S'$ is an \emph{extension}\index{vocabulary!extension|ii} of $S$.

Let $S,S'$ be vocabularies with $S \subseteq S'$ and suppose that $N$ is an $S'$-structure.  The \emph{reduct}\index{structure!reduct|ii} of $N$ to $S$, denoted $N \upharpoonright S$, is the $S$-structure that results by removing from $N$ the operations and predicates that are indexed by $S'$ but not $S$.  We say that a continuous metric structure $N$ is an \emph{expansion}\index{structure!expansion|ii} of a structure $M$ if $M$ is a reduct of $N$.

Let $S$ be a vocabulary and suppose that $M$ and $N$ are $S$-structures.  We say that $M$ is a \emph{substructure}\index{structure!substructure|ii} of $N$ (or that $N$ is an \emph{extension}\index{structure!extension|ii} of $M$) if the following conditions hold:
\begin{itemize}
\item The universe of $N$ contains the universe of $M$, and the distinguished metric of $N$ extends the distinguished metric of $M$.

\item For every operation symbol $f$ of $S$, the operation $f^{N}$ extends $f^{M}$.

\item For every predicate symbol $P$ of $S$, the predicate $P^{N}$ extends $P^{M}$.
\end{itemize}

Let $S$ be a vocabulary and let $M,N$ be $S$-structures.  A \emph{metric isomorphism} between $M$ and $N$ is a surjective isometry $T: M \to N$ which commutes with the interpretation of the operation and predicate symbols of $S$.  We say that $M$ and $N$ are \emph{metrically isomorphic} (or simply \emph{isomorphic}\index{structure!isomorphic|ii}), and write $M \simeq N$, if there exists a metric isomorphism between $M$ and $N$.

Clearly, if $\mathbf{S}$ is a signature and $M$ is an $\mathbf{S}$-structure, then every structure that is metrically isomorphic to $M$ is also an $\mathbf{S}$-structure.

A \emph{renaming} is a bijection between vocabularies that sends function symbols to function symbols, predicate symbols to predicate symbols, and preserves arities.  If $M$ is an $S$-structure and  $\rho:S\to S'$ is a renaming, we denote by $M^\rho$ the $S'$-structure that results from converting $M$ into an $S'$-structure through $\rho$.

\subsection{Logics} \label{subection:Logics}

The formal definition of model-theoretic logic was introduced by P.~Lindstr\"{o}m in his famous paper \cite{Lindstrom:1969}. Lindstr\"{o}m's original definition of logic was intended for classical structures, i.e., discrete structures. Here we will use it for the more general context of continuous metric structure given in Definition~\ref{Definition:MetricStructure}. In general, throughout the paper, the word ``structure'' will stand for ``continuous metric structure''.

\begin{definition} \label{Definition:LogicalSystem}
A \emph{logic} $\mathcal{L}$ is a pair $(\Sent_{\mathcal{L}}, \models_{\mathcal{L}})$, where $\Sent_{\mathcal{L}}$ is a function that assigns to every vocabulary $S$ a set $\Sent_{\mathcal{L}}(S)$ called the set  of \emph{$S$-sentences of $\mathcal{L}$}\index{logic!sentence|ii} and $\models_{\mathcal{L}}$ is a binary relation between structures and sentences, such that the following conditions hold:
\begin{enumerate}
\item If $S \subseteq S'$, then $\Sent_{\mathcal{L}}(S) \subseteq \Sent_{\mathcal{L}}(S')$.

\item If $M \models_{\mathcal{L}} \varphi$ (i.e., if $M$ and $\varphi$ are related under $\models_{\mathcal{L}}$), then there is a vocabulary $S$ such that $M$ is an $S$-structure and $\varphi$ an $S$-sentence.

\item \emph{Isomorphism Property}\index{logic!Isomorphism Property|ii}.  If $M \models_{\mathcal{L}} \varphi$ and $M \simeq N$, then $N \models_{\mathcal{L}} \varphi$.

\item \emph{Reduct Property}\index{logic!Reduct Property|ii}.  Let $S \subseteq S'$ and suppose $\varphi$ is an $S$-sentence and $M$ an $S'$-structure.  Then
\[
	M \models_{\mathcal{L}} \varphi \qquad \text{if and only if} \qquad (M \upharpoonright S) \models_{\mathcal{L}} \varphi.
\]

\item \emph{Renaming Property}\index{logic!Renaming Property|ii}.  If $\rho: S \to S'$ is a renaming,  then for each $S$-sentence $\varphi$ there is an $S'$-sentence $\varphi^{\rho}$ such that $M \models_{\mathcal{L}} \varphi$ if and only if $M^{\rho} \models_{\mathcal{L}} \varphi^{\rho}$.  (Recall that $M^{\rho}$ denotes the structure that results from converting $M$ into an $S'$-structure through $\rho$.)
\end{enumerate}
If $M \models_{\mathcal{L}} \varphi$, we say that $M$ \emph{satisfies} $\varphi$, or that $M$ is a \emph{model} of $\varphi$.

\end{definition}

The study of abstract logics is known as abstract model theory.  For a survey, the reader is referred to~\cite{Barwise-Feferman:1985}.

A logic $\mathcal{L}$ is said to be \emph{closed under conjunctions}\index{conjunctions (closed under)|ii} if given any two $\mathcal{L}$-sentences $\varphi,\psi$ there exists an $\mathcal{L}$-sentence $\varphi \wedge \psi$ such that for every structure $M$
\[
	M \models_{\mathcal{L}} \varphi \wedge \psi \qquad \text{if and only if} \qquad M \models_{\mathcal{L}} \varphi \quad \text{and} \quad M \models_{\mathcal{L}} \psi.
\]
Similarly, $\mathcal{L}$ is said to be \emph{closed under disjunctions}\index{disjunctions (closed under)|ii} if given two $\mathcal{L}$-sentences $\varphi,\psi$ there exists an $\mathcal{L}$-sentence $\varphi \vee \psi$ such that for every structure $M$
\[
	M \models_{\mathcal{L}} \varphi \vee \psi \qquad \text{if and only if} \qquad M \models_{\mathcal{L}} \varphi \quad \text{or} \quad M \models_{\mathcal{L}} \psi.
\]
A logic $\mathcal{L}$ is said to be \emph{closed under negations}\index{negations (closed under)|ii} if given an $\mathcal{L}$-sentence $\varphi$ there exists an $\mathcal{L}$-sentence $\neg \varphi$ such that for every structure $M$
\[
	M \models_{\mathcal{L}} \neg \varphi \qquad \text{if and only if} \qquad M \nmodels_{\mathcal{L}} \varphi.
\]

Abstract logics without negation have been studied in \cite{Iovino:2001, Garcia-Matos:2004, Garcia-Matos-Vaananen:2005}.  

\begin{convention} \label{Convention:AssumedAndInfinitaryConnectives}
We will assume that all logics are closed under finite conjunctions and disjunctions, but not necessarily under negations.  We will also assume that every logic $\mathcal{L}$ mentioned is nontrivial in the following sense: for every structure $M$, there is a sentence $\varphi$ such that $M \nmodels_{\mathcal{L}} \varphi$. 
\end{convention}

\begin{definition} \label{Definitions:TheoryModelConsistent}
Let $S$ be a vocabulary.
\begin{enumerate}
\item An \emph{$S$-theory} (or simply a \emph{theory}\index{theory|ii} if the vocabulary is given by the context) is a set of $S$-sentences.

\item Let $T$ be an $S$-theory.  If $M$ is an $S$-structure such that $M \models_{\mathcal{L}} \varphi$ for each $\phi\in T$, we say that $M$ is a \emph{model}\index{theory!model|ii} of $T$ and write $M \models_{\mathcal{L}} T$.

\item A theory $T$ is \emph{consistent}\index{theory!consistent|ii} if it has a model.
\end{enumerate}
\end{definition}

If $\varphi \in \Sent_{\mathcal{L}}(S)$ for some vocabulary $S$, but there is no need to refer to the specific vocabulary, we may refer to $\varphi$ an \emph{$\mathcal{L}$-sentence}. Similarly, when $T$ is an $S$-theory for some vocabulary $S$ and there is no need to refer to $S$, we may refer to $T$ as an \emph{$\mathcal{L}$-theory}. 

If $S$ is a vocabulary, $\bar{x} = x_{1},\dots,x_{n}$ is a finite list of constant symbols not in $S$, and $\varphi$ is an $(S \cup \{\bar{x}\})$-sentence, we emphasize this by writing $\varphi$ as $\varphi(\bar{x})$.  In this case we may say that $\varphi(\bar{x})$ is an \emph{$S$-formula}.  If $M$ is an $S$-structure and $\bar{a} = a_{1},\dots,a_{n}$ is a list of elements of $M$, we write
\[
	(M,a_{1},\dots,a_{n}) \models_{\mathcal{L}} \varphi(x_{1},\dots,x_{n})
\]
or
\[
	M \models_{\mathcal{L}} \varphi[\bar{a}]
\]
if the $S \cup \{\bar{x}\}$ expansion of $M$ that results from interpreting $x_{i}$ by $ a_{i}$ (for $i = 1,\dots,n$) satisfies $\varphi(\bar{x})$.

\begin{definition} \label{Definition:LogicalEquivalence}
Let $M,N$ be $S$-structures.  We say that $M$ and $N$ are \emph{equivalent in $\mathcal{L}$}\index{structure!equivalent|ii}, and write $M \equiv_{\mathcal{L}} N$, if for every $S$-sentence $\varphi$ we have $M \models_{\mathcal{L}} \varphi$ if and only if $N \models_{\mathcal{L}} \varphi$.
\end{definition}

If $M$ is a structure and $A$ is a subset of the universe of $M$, we denote by $(M,a)_{a\in A}$ the expansion of $M$ that results by adding a constant symbol for each element of $A$. The structure  $(M,a)_{a\in A}$ is said to be an \emph{expansion of $M$ by constants}.

\begin{definition} \label{Definition:ElementarySubstructure}
Let $\mathcal{L}$ be a logic and let $M,N$ be $S$-structures with $M$ a substructure of $N$.  We say that $M$ is an \emph{elementary substructure}\index{structure!elementary substructure|ii} of $N$ (with respect to $\mathcal{L}$), and write $M \preceq_{\mathcal{L}} N$, if  $(M,a)_{a\in A}\equiv (N,a)_{a\in A}$.
\end{definition}

Recall that a signature is a pair $\mathbf{S}=(S,\mathcal{U})$, where $S$ is a first-order vocabulary and $\mathcal{U}$ is a family of uniform continuity moduli for the symbols of $S$ (see Definition~\ref{Definition:Signature}).

\begin{definition} \label{Definition:CompactnessProperty}
Let $\mathcal{L}$ be a logic.\hfill
\begin{enumerate}
\item
 \label{Definition:LambdaCompactLogic}
If $\lambda$ is an infinite cardinal, $\mathcal{L}$ is \emph{$\lambda$-compact} if whenever $\mathbf{S}$ is a signature and $T$ is an $S$-theory of cardinality~$\le\lambda$ such that every finite subset of $T$ is satisfied by an $\mathbf{S}$-structure, the theory $T$ is satisfied by an $\mathbf{S}$-structure.
\item
$\mathcal{L}$ is \emph{compact} if it is $\lambda$-compact for every infinite $\lambda$.
\end{enumerate}
\end{definition}

%
%\begin{definition} \label{Definition:DownwardLowenheimSkolemProperty}
%A logic $\mathcal{L}$ has the \emph{L\"{o}wenheim-Skolem-Tarski property}\index{logic!L\"{o}wenheim-Skolem property|ii} if, for every countable vocabulary $S$, every infinite $S$-structure has a countable elementary substructure.
%\end{definition}
%
%
%First-order logic is compact and has the L\"{o}wenheim-Skolem-Tarski property for discrete structures. A logic $\mathcal{L}$ is said to have the \emph{L\"{o}wenheim-Skolem property}\index{logic!L\"{o}wenheim-Skolem property|ii} if every consistent sentence has a countable model.
%
%
%In \cite{Lindstrom:1969}, P.~Lindstr\"{o}m proved that first-order logic is maximal with respect to the compactness and L\"{o}wenheim-Skolem properties for sentences:
%
%\begin{theorem}[Lindstr\"{o}m's First Theorem] \label{Theorem:LindstromsFirstTheorem}
%Let $\mathcal{L}$ be a logic for discrete structures that extends first-order logic and satisfies the following properties:
%\begin{itemize}
%\item
%Closure under negations and conjunctions,
%\item
%Compactness,
%\item
%The L\"{o}wenheim-Skolem property.
%\end{itemize}
%Then $\mathcal{L}$ is equivalent to first-order logic.
%\end{theorem}

The following concept will be needed for the statement of the Main Theorem (Theorem~\ref{Theorem:TheMainTheorem}).

\begin{definition} 
 \label{D:FiniteOccurrenceProperty}
We say that a logic $\mathcal{L}$ has the \emph{finite occurrence property}\index{logic!finite occurrence property|ii} if for every vocabulary $S$ and every $S$-sentence $\varphi$ there is a finite vocabulary $S_0\subseteq S$ such that $\phi$ is an $S_0$-sentence.
\end{definition}

\subsection{$[0,1]$-valued logics} \label{subection:RealValuedLogics}
Hereafter, for simplicity,  we focus on $[0,1]$-valued structures, i.e., continuous metric structures where the distinguished metric and all the predicates take values on the closed unit interval $[0,1]$. More general structures are discussed in Section~\ref{S: beyond [0,1]}.

We now refine Lindstr\"om's definition of logic (Definition~\ref{Definition:LogicalSystem}):

\begin{definition}
\label{Definition:RealValuedLogic}
A  \emph{$[0,1]$-valued logic}  is a pair $(\Sent_{\mathcal{L}},\mathcal{V})$, where $\Sent_{\mathcal{L}}$ is a function that assigns to every vocabulary $S$ a set $\Sent_{\mathcal{L}}(S)$ called the set  of \emph{$S$-sentences of $\mathcal{L}$}\index{logic!sentence|ii} and $\mathcal{V}$ is a functional relation such that the following conditions hold: 
\begin{enumerate}

\item 
If $S \subseteq S'$, then $\Sent_{\mathcal{L}}(S) \subseteq \Sent_{\mathcal{L}}(S')$.
\item
The relation $\mathcal{V}$ assigns to every pair $(\phi, M)$, where $\phi$ is an $S$-sentence of $\mathcal L$ and  $M$ is an $S$-structure, a real number $\phi^M\in[0,1]$ called the \emph{truth value} of $\phi$ in $M$.

%Every $S$-setence is a function from the class of $S$-structures of $\mathcal L$ into the closed unit interval $[0,1]$.

%\item If $M \models_{\mathcal{L}} \varphi$ (i.e., if $M$ and $\varphi$ are related under $\models_{\mathcal{L}}$), then there is a vocabulary $S$ such that $M$ is an $S$-structure and $\varphi$ an $S$-sentence.

\item \emph{Isomorphism Property for $[0,1]$-valued logics}. If $M, N$ are metrically isomorphic structures of $\mathcal L$ and $\phi$ is an $S$-sentence of $\mathcal{L}$, then $\phi^M=\phi^N$.

\item \emph{Reduct Property  for $[0,1]$-valued logics.}  If $S \subseteq S'$,  $\varphi$ is an $S$-sentence of $\mathcal L$, and $M$ an $S'$-structure of $\mathcal{L}$, then
$\varphi^M =\phi^{M \upharpoonright S}$.

\item \emph{Renaming Property  for $[0,1]$-valued logics.} If $\rho: S \to S'$ is a renaming, then for each $S$-sentence $\varphi$ of $\mathcal{L}$ there is an $S'$-sentence $\varphi^{\rho}$ such that $\phi^M=(\phi^\rho)^{M^{\rho}}$  for every $S$-structure $M$. 
\end{enumerate}
\end{definition}

\begin{definition}
If $\mathcal L$ is a $[0,1]$-valued logic, $\phi$ is an $S$-sentence of $\mathcal L$ and  $M$ is an $S$-structure such that $\varphi^M=1$, we say that $M$ \emph{satisfies} $\varphi$, or that $M$ is a \emph{model} of $\varphi$, and write $M \models_{\mathcal{L}} \varphi$.
\end{definition}

Note that if $\mathcal L$ is  $[0,1]$-valued logic, then $(\Sent_{\mathcal{L}}, \models_{\mathcal{L}})$ is a logic in the sense of Definition~\ref{Definition:LogicalSystem}. Therefore we may apply to $\mathcal{L}$ all the concepts and properties defined so far for plain logics.

\begin{definition} \label{Definition:LukasiewiczImplication}
The  \emph{{\L}ukasiewicz implication}  is the function $\lukimp$ from $[0,1]^2$ into $[0,1]$ defined by
\[
x\lukimp y= \min\{1-x+y,1\}.
\]
%If $f,g$ are functions from a set $X$ into $[0,1]$, then $f\lukimp g$ is the function from $X$ into $[0,1]$ defined by setting $(f\lukimp g)(x)=(f(x))\lukimp (g(x))$.
\end{definition}

Note that $x\lukimp y=1$ is and only if $x\le y$.

\begin{definition}
\label{D:basic connectives}
We will say that a $[0,1]$-valued logic $\mathcal{L}$ is \emph{closed under the basic connectives} if the following conditions hold for every vocabulary $S$:
\begin{enumerate}
\item 
If $\phi,\psi \in \Sent_{\mathcal{L}}(S)$, then there exists a sentence $\phi\lukimp\psi \in\Sent_{\mathcal{L}}(S)$ such that $(\phi\lukimp\psi)^M=(\phi)^M\lukimp(\psi)^M$ for every $S$-structure $M$.
\item 
For each rational $r\in[0,1]$, the set $\Sent_{\mathcal{L}}(S)$ contains a sentence with constant truth value~$r$. These sentences are called the \emph{constants} of $\mathcal L$.
\end{enumerate}
\end{definition}

%\begin{remark} \label{Remark:InequalitiesAndTheLukasiewiczImplication}
%If $f,g$ are functions from a set $X$ into $[0,1]$, then, for every $\in X$,
%\[
%	(f \lukimp g)(x) = 1 \qquad \text{if and only if} \qquad f(x) \leq g(x).
%\]
%\end{remark}
%
%\begin{notation} \label{Notation:InequalitiesAndTheLukasiewiczImplication}
%If $g$ is constant, say, $g(x) = r$ for every $x \in X$, we denote the function $f \lukimp g$ by $f \leq r$ and $g \lukimp f$ by $f \geq r$.  By Remark~\ref{Remark:InequalitiesAndTheLukasiewiczImplication}, for every $x \in X$,
%\[
%	(f \leq r)(x) = 1 \qquad \text{if and only if} \qquad f(x) \leq r,
%\]
%and similarly for the function $f \geq r$.
%\end{notation}

\begin{notation}
If $\mathcal{L}$ is a $[0,1]$-valued logic that is closed under the basic connectives $\phi$ is a sentence of $\mathcal{L}$, and $r$ is a constant of $\mathcal L$, we will write $\phi\le r$ and $\phi\ge r$, as abbreviations, respectively, of $\phi\lukimp r$ and $r\lukimp \phi$. 
\end{notation}

\begin{remark}
\label{R:reduction of [0,1]-valued to 2-valued}
Let $\mathcal{L}$ be a $[0,1]$-valued logic and let $S$ be a vocabulary. If $M$ is an $S$-structure of $\mathcal{L}$, $\phi$ is an $S$-sentence of $\mathcal{L}$, and $r$ is a constant of $\mathcal{L}$, then  $M\models_{\mathcal{L}}\phi\le r$ if and only if $\phi^M\le r$, and $M\models_{\mathcal{L}}\phi\ge r$ if and only if $\phi^M\ge r$; thus, the truth value $\phi^M$ is determined by either of the sets
\[
\{\,r\in\mathbb{Q}\cap[0,1] \mid M\models_{\mathcal{L}}\phi\le r\,\},\qquad
\{\,r\in\mathbb{Q}\cap[0,1] \mid M\models_{\mathcal{L}}\phi\ge r\,\}.
\]
\end{remark}

The following proposition will be invoked multiple times in the paper; the proof is left to the reader.

\begin{proposition} \label{Proposition:InequalitiesAndTheLukasiewiczImplication}
Let $\mathcal{L}$ be a $[0,1]$-valued logic that is closed under the basic connectives, let $\phi$ be an $S$-sentence of $\mathcal{L}$, and let $r,s$ be constants of $\mathcal L$. Then, for every $S$-structure $M$, one has
\begin{enumerate}
\renewcommand{\theenumi}{\textup{(\arabic{enumi})}}
\renewcommand{\labelenumi}{\theenumi}

\item $\phi^M \le  (\phi\ge r)^M$.

\item $((\phi\geq r) \geq s)^M = (\phi\geq (r+s-1))^M$.
\end{enumerate}
\end{proposition}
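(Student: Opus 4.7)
The plan is to unfold the abbreviations and perform a direct computation with the Łukasiewicz implication. By the notational convention preceding the proposition, $(\phi\ge r)$ abbreviates $r\lukimp\phi$, so closure under $\lukimp$ yields $(\phi\ge r)^M=r\lukimp\phi^M=\min\{1-r+\phi^M,1\}$. Both parts reduce to arithmetic on $[0,1]$.

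For part (1), I want to verify $\phi^M\le \min\{1-r+\phi^M,1\}$. This splits into two inequalities: $\phi^M\le 1$, which is immediate because all truth values lie in $[0,1]$, and $\phi^M\le 1-r+\phi^M$, which is equivalent to $r\le 1$ and holds because $r$ is a rational in $[0,1]$.

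For part (2), the computation is
\[
((\phi\ge r)\ge s)^M=s\lukimp(r\lukimp \phi^M)=\min\{1-s+\min\{1-r+\phi^M,1\},\,1\},
\]
and I compare this with $(\phi\ge(r+s-1))^M=\min\{2-r-s+\phi^M,1\}$. I carry out the standard case split on whether $\phi^M\le r$ or $\phi^M>r$. In the first case the inner minimum equals $1-r+\phi^M$ and the outer expression simplifies to $\min\{2-r-s+\phi^M,1\}$, matching the right-hand side. In the second case the inner minimum is $1$, the left-hand side collapses to $\min\{2-s,1\}=1$, and the inequality $\phi^M>r$ forces $2-r-s+\phi^M>2-s\ge 1$, so the right-hand side also equals $1$.

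The only potential subtlety is a boundary issue rather than a mathematical obstacle: if $r+s<1$ then $r+s-1$ is not itself a constant of $\mathcal L$, so the identity has to be read modulo the natural convention that $t\lukimp x$ equals $1$ whenever $t\le 0$ (consistent with the clipping in Definition~\ref{Definition:LukasiewiczImplication}). Granting this, the computation above shows that when $r+s\le 1$ both sides equal $1$. I do not anticipate any genuine difficulty; the whole statement is the kind of routine arithmetic that the authors defensibly leave to the reader.
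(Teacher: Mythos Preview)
Your argument is correct and is precisely the routine verification the authors intend: the paper explicitly leaves the proof to the reader, so there is no alternative approach to compare with. Your handling of the boundary case $r+s<1$ is appropriate; in the paper's applications of part~(2) (in the proof of the Main Theorem) one always has $r+s\ge 1$, so the issue never arises in practice.
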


\begin{notation}
If $\mathcal{L}$ is a $[0,1]$-valued logic that is closed under the basic connectives and $\phi,\psi$ are sentences of $\mathcal{L}$, we write $\neg\phi$ and $\phi\lor\psi$, as abbreviations, respectively, of $\phi\lukimp 0$ and $(\phi\lukimp\psi)\lukimp\psi$, and $\phi\land\psi$ as an abbreviation of $\neg(\neg\phi\lor\neg\psi)$.
\end{notation}

Note that  for every $S$-structure $M$, one has
\begin{align*}
(\phi\le 0)^M =& 1-(\phi)^M,\\
(\phi\lor\psi)^M=&\max\{\phi^M,\psi^M\},\\
(\phi\land\psi)^M=&\min\{\phi^M,\psi^M\}.
\end{align*}
In particular, every $[0,1]$-valued logic that is closed under the basic connectives is closed under conjunctions and disjunctions. 

We will refer to any function from $[0,1]^n$ into $[0,1]$, where $n$ is a nonnegative integer, as am $n$-ary \emph{connective}.  The \L ukasiewicz implication and the Pavelka constants are continuous connectives, as are all the projections $(x_1,\dots x_n)\mapsto x_i$. The following proposition states that any other other continuous connective can be approximated by finite combinations of these.

\begin{proposition}\label{P:connectives approximation}
Let $\mathcal{C}$ be the class of connectives generated by the \L ukasiewicz implication, the Pavelka constants, and the projections through composition.  Then every continuous connective is a uniform limit of connectives in $\mathcal{C}$. 
\end{proposition}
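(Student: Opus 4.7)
The plan is to identify (up to uniform closure) the class $\mathcal{C}$ with the class of continuous piecewise-linear $[0,1]$-valued functions on $[0,1]^n$ having rational coefficients --- the Pavelka--McNaughton functions --- and then to invoke the classical fact that such functions are uniformly dense in $C([0,1]^n,[0,1])$.

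First I would accumulate a stock of derived connectives in $\mathcal{C}$. From $\lukimp$ and the rational constants one gets negation $\neg x = x\lukimp 0 = 1-x$, bounded sum $x\oplus y := \neg x \lukimp y = \min(x+y,1)$, bounded difference $x\ominus y := \neg(x\lukimp y)=\max(x-y,0)$, and the lattice operations $x\vee y = (x\lukimp y)\lukimp y = \max(x,y)$ and $x\wedge y = \neg(\neg x \vee \neg y)=\min(x,y)$. In particular $\mathcal{C}$ is a sublattice of $C([0,1]^n,[0,1])$ containing all rational constants. Using $\oplus$ and $\ominus$ together with the Pavelka constants one then builds every truncated rational affine function
\[
(x_1,\dots,x_n) \;\longmapsto\; \max\!\bigl(\min\bigl(q_0 + \textstyle\sum_{i=1}^n q_i x_i,\, 1\bigr),\, 0\bigr), \qquad q_0,q_i\in\mathbb{Q},
\]
the only nontrivial point being rational scaling $x\mapsto qx$, which is obtained by iterated $\oplus$ (for integer multiples) and an averaging trick (for integer division) using the rational constants --- the standard Pavelka extension of the classical McNaughton construction.

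Second, I would invoke McNaughton's theorem in its rational (Pavelka) form: every continuous piecewise-linear function $[0,1]^n\to[0,1]$ with finitely many linear pieces all of whose coefficients are rational can be written as a finite $\max$ of $\min$s of truncated rational affine functions. Combining with the previous step, this places every such function in $\mathcal{C}$.

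Finally, uniform density of rational-coefficient piecewise-linear functions in $C([0,1]^n,[0,1])$ is a routine consequence of uniform continuity on the compact cube: given continuous $c$ and $\varepsilon>0$, choose a sufficiently fine simplicial subdivision of $[0,1]^n$, define a piecewise-linear function that agrees with $c$ at the vertices, perturb the vertex values to nearby rationals, and take the truncation into $[0,1]$. By the preceding paragraph this approximant lies in $\mathcal{C}$. The main obstacle is the Pavelka--McNaughton step of realizing every rational truncated affine function by a term in $\lukimp$ and the rational constants; this is classical but somewhat delicate, and the rest of the argument is, by comparison, soft topology.
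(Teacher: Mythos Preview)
There is a genuine gap in your argument, and it sits exactly where you flag the ``only nontrivial point.'' The map $x\mapsto x/n$ (for $n\ge 2$) is \emph{not} in $\mathcal{C}$, and there is no ``averaging trick'' using the Pavelka constants that produces it. A quick induction on term structure shows why: projections have gradient a standard basis vector, rational constants have gradient $0$, and $\min(1-f+g,1)$ has, on each linear piece, a gradient that is a difference of gradients of $f$ and $g$ (or $0$). Hence every function in $\mathcal{C}$ is continuous piecewise linear with \emph{integer} gradient on each piece (the rational constants only shift intercepts). Since $x/2$ has slope $1/2$ everywhere, it cannot be a term function. You may be thinking of Rational \L ukasiewicz logic (divisible MV-algebras), where a genuine division connective is added; Pavelka's extension adds only truth constants, and those do not buy you non-integer slopes.

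This undercuts both your first and second steps: $\mathcal{C}$ does not contain all truncated rational affine functions, nor all rational-coefficient piecewise linear functions. Your third step (density of rational-coefficient PL functions in $C([0,1]^n,[0,1])$) is fine but now useless, because the set you have shown dense is not the one you have placed inside $\mathcal{C}$. What is actually needed is the density of the \emph{integer}-gradient, rational-intercept PL functions, and proving that is the real content of the proposition. The paper does exactly this: it observes that $\mathcal{C}$ is a sublattice containing the constants, invokes the lattice form of Stone--Weierstrass, and then reduces everything to exhibiting $\tfrac{1}{2}x$ as a uniform limit of functions in $\mathcal{C}$ via the explicit formula
\[
\tfrac{1}{2}x \;=\; \lim_n \ \bigvee_{i=1}^{n}\Bigl(\tfrac{i}{n}\wedge \neg\bigl(x\lukimp \tfrac{i}{n}\bigr)\Bigr),
\]
which is a sawtooth approximation with slopes $0$ and $1$. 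Your McNaughton-style route can be salvaged, but only by supplying precisely this kind of approximation argument---at which point you have essentially reproduced the paper's proof with extra scaffolding.
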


\begin{proof}
Since $\mathcal{C}$ is closed under the connectives $\max\{x,y\}$ and $\min\{x,y\}$, by the Stone-Weierstrass Theorem for lattices \cite[pp.~241-242]{Gillman-Jerison:1976}, we only need to show that the connectives $rx$, where $r$ is a dyadic rational, can be approximated by connectives in $\mathcal{C}$. 

Notice that if $x\in[0,1]$,
\[
\frac{1}{2}x=\lim_n \left(\bigvee\limits_{i=1}^{n}\frac{i}{n}\wedge \lnot (x\lukimp \frac{i}{n})\right).
\]
Hence, since the truncated sum $\oplus:[0,1]^2\to[0,1]$ is in $\mathcal{C}$ (as $x\oplus y=\neg x\lukimp y$), so are all the connectives $(\frac{1}{2}x+\dots+\frac{1}{2^n})x$, for any positive integer $n$.
\end{proof}

The following concept will be invoked in the statement of the Main Theorem (Theorem~\ref{Theorem:TheMainTheorem}).

\begin{definition}\label{D:closed under classical quantifiers}
Let $\mathcal{L}$  be a $[0,1]$-valued logic. We say that $\mathcal{L}$ is  \emph{closed under existential quantifiers} if given any any $S$-formula $\varphi(x)$ there exists an $S$-formula $\exists x\phi$ such that for every $S$-structure $M$ one has $(\exists x\phi)^M=\sup_{a\in M}(\phi[a]^M)$. Similarly, we say that $\mathcal{L}$ is  \emph{closed under universal quantifiers} if given any any $S$-formula $\varphi(x)$ there exists an $S$-formula $\forall x\phi$ such that for every $S$-structure $M$ one has $(\forall x\phi)^M=\inf_{a\in M}(\phi[a]^M)$.
\end{definition}

\subsection{Basic continuous logic, continuous logic, and {\L}ukasiewicz-Pavelka logic}\index{continuous logic|ii} \label{subection:ContinuousLogic}

In this subsection we define two particular $[0,1]$-valued logics, namely, the continuous logic framework of \cite{Ben-Yaacov-Usvyatsov:2010} and the {\L}ukasiewicz-Pavelka logic (see, for example, Section 5.4 of~\cite{Hajek:1998}). Both are logics for continuous metric structures. Traditionally, both logics have focused on particular classes of structures: the emphasis in continuous logic is in complete structures with arbitrary uniform continuity moduli, while in {\L}ukasiewicz-Pavelka logic the focus has been on structures whose operations and predicates are 1-Lipschitz. However, as model-theoretic logics, both can be seen as restrictions to specific classes of structures of a more general framework that we introduce here and call \emph{basic continuous logic}.

The structures of basic continuous logic are all continuous metric structures. The class of sentences of this logic is defined as follows.

For a vocabulary $S$, the concept of $S$-term is defined as in first-order logic. If $t(x_1,\dots,x_n)$ is an $S$-term (where $x_1,\dots, x_n$ are the variables that occur in $t$), $M$ is an $S$ structure, and $a_1,\dots,a_n$ are elements of $M$, the interpretation $t^M[a_1,\dots,a_n]$ is defined as in first-order logic as well. The atomic formulas of $S$ are all the expressions of the form $d(t_1,t_2)$ or $R(t_1,\dots,t_n)$, where  $R$ is an $n$-ary predicate symbol of $S$. If $\varphi(x_1,\dots,x_n)$ is an atomic $S$-formula with variables $x_1,\dots,x_n$ and $a_1,\dots,a_n$ are elements of and $S$-structure $M$, the interpretation $\varphi^M[a_1,\dots,a_n]$ is defined naturally by letting
\[
  R(t_1,\dots,t_n)^M[a_1,\dots,a_n] = R^M(t_1^M[a_1,\dots,a_n],\dots,t_n^M[a_1,\dots,a_n])
\]
and
\[
 d(t_1,t_2)^M[a_1,\dots,a_n] = d^M(t_1^M[a_1,\dots,a_n],t_2^M[a_1,\dots,a_n]).
 \]
The $S$-formulas of basic continuous logic are the syntactic expressions that result from closing the atomic formulas of $S$ under the \L ukasiewicz implication, the Pavelka constants, and the existential quantifier; formally, the concept of $S$-formula and the interpretation $\phi^M$ of a formula $\phi$ in a given structure $M$ are defined inductively by the following rules:
\begin{itemize}
\item
All atomic formulas of $S$ are $S$-formulas.
\item
If $\phi(x_1,\dots,x_n)$ and $\psi(x_1,\dots,x_n)$ are $S$-formulas, then $\psi\lukimp\psi$ is a formula; if  $a_1,\dots,a_n\in M$,  the interpretation $(\psi\lukimp\psi)^M[a_1,\dots,a_n]$ is defined as $(\psi^M[a_1,\dots,a_n])\lukimp(\psi^M[a_1,\dots,a_n])$.
\item
If $\phi(x_1,\dots,x_n, x)$ is an $S$-formula, then $\exists x\phi$ is a formula; if  $a_1,\dots,a_n\in M$,  the interpretation $(\exists x\phi)^M[a_1,\dots,a_n]$ is defined as $\sup_{a\in M}(\phi[a]^M[a_1,\dots,a_n])$,
\item
For every rational $r\in[0,1]$ there is an $S$-formula, denoted also $r$, whose interpretation in any structure is the rational $r$.
\end{itemize}

We write $M\models_{\cl}\phi[a_1,\dots,a_n]$ if $\phi[a_1,\dots,a_n]^M=1$. A \emph{sentence} of basic continuous logic  is a formula without free variables, and the \emph{truth value} of a $S$-sentence $\phi$ in an $S$-structure $M$ is $\phi^M$.

Recall that in any $[0,1]$-valued logic that is closed under the basic connectives,  the expressions $\neg\phi$, $\phi\lor\psi$, $\phi\land\psi$, $\phi\le r$, and $\phi\ge r$ are written as abbreviations of $\phi\lukimp 0$, $(\phi\lukimp\psi)\lukimp\psi$,  $\neg(\neg\phi\lor\neg\psi)$, $\phi\lukimp r$, and $r\lukimp\phi$, respectively. In basic continuous logic we also regard $\forall x\phi$ as an abbreviation of $\neg\exists x\neg\phi$. 

Having defined basic continuous logic, let us now describe the $[0,1]$-valued logic that Ben Yaacov and Usvyatsov introduced in~\cite{Ben-Yaacov-Usvyatsov:2010} and called \emph{continuous first-order-logic}. We will refer to this framework simply as continuous logic. It is an instance of the more general concept of continuous logic studied by Chang and Keisler in~\cite{Chang-Keisler:1966}, and was proposed by Ben Yaacov and Usvyatsov as a reformulation of Henson's model theory of complete metric spaces\footnote{For a survey of Henson's logic as it regards structures based on normed spaces, see~\cite{Henson-Iovino:2002}. The more general framework devised by Henson for arbitrary continuous metric structures was never published.}

The class of structures of continuous logic is the class of complete metric structures. The concepts of formula and sentence for this logic are defined as for basic continuous logic, but instead of taking the closure under the \L ukasiewicz-Pavelka connectives one takes the closure under all continuous connectives (and the existential quantifier). Proposition~\ref{P:connectives approximation} gives us the following:
\begin{remark}
\label{R:formulas approximation}
For every $S$-formula $\phi(\bar x)$ of continuous logic and for every $\epsilon>0$ there exists a formula $\psi(\bar x)$ of basic continuous logic such that $|\phi^M[\bar a]-\psi^M[\bar a]|<\epsilon$ for every $S$-structure $M$ and every tuple $\bar a$ in $M$ with $\ell(\bar a)=\ell(\bar x)$.
\end{remark}
This observation allows us to transfer model-theoretic results from basic continuous logic to continuous logic, by simply restricting them to the realm of complete metric structures.

Finally, let us discuss \L ukasiewicz-Pavelka logic. Pavelka extended  \L ukasiewicz propositional logic by adding the rational constants, and proved a form of approximate completeness for  the resulting logic. See~\cite{Pavelka:1979I,Pavelka:1979II,Pavelka:1979III} (see also Section 5.4 of~\cite{Hajek:1998}.) This is known as Pavelka-style completeness. The extension of \L ukasiewicz logic with Pavelkas's constants is referred to in the literature as rational Pavelka logic, or Pavelka many-valued logic. Nov\'ak proved Pavelka-style completeness for predicate \L ukasiewicz-Pavelka logic, which he calls ``first-order fuzzy logic", first using ultrafilters~\cite{Novak:1989,Novak:1990}, and later using a Henkin-type construction~\cite{Novak:1995}. Another proof of  Pavelka-style completeness for predicate \L ukasiewicz-Pavelka logic was given by Hajek; see~\cite {Hajek:1997} and~\cite[Section 5.4]{Hajek:1998}.  Hajek, Paris, and Stepherson have proved that  \L ukasiewicz-Pavelka logic is a conservative extension of  \L ukasiewicz predicate logic~\cite{Hajek-Paris-Stepherson:2000}.

The formulas of \L ukasiewicz-Pavelka logic are like those of basic continuous logic, with the difference that  in place of the distinguished metric $d$ one uses the \emph{similarity} relation $x\approx y$. However, there is a precise correspondence between the two relations, namely, $d(x,y)$ is $1-(x\approx y)$ (in other words, the two relations are negations of each other); see Section~5.6 of~\cite{Hajek:1998}, especially Example~5.6.3-(1). Also, in  \L ukasiewicz-Pavelka logic, in place of the uniform continuity requirement given in Definition~\ref{Definition:Signature}, for each $n$-ary operation symbol $f$, one has the axiom
\label{P:similarity axioms}
\[
(x_1\approx y_1\land\dots\land x_n\approx y_n) \lukimp 
(f(x_1\dots,x_n)\ \approx f(y_1,\dots y_n)),
\]
and similarly, for each $n$-ary predicate symbol $R$, one has the axiom\footnote{Here we use $(\alpha\leftrightarrow_{L}\beta)$ as an abbreviation of $(\alpha \lukimp \beta)\land (\beta \lukimp \alpha)$.}
\[
(x_1\approx y_1\land\dots\land x_n\approx y_n) \lukimp 
(R(x_1\dots,x_n)\ \leftrightarrow_{L} R(y_1,\dots y_n)).
\]
See Definition~5.6.5 of~\cite{Hajek:1998}. Thus,  \L ukasiewicz-Pavelka logic is the restriction of basic continuous logic to the class of 1-Lipschitz structures, i.e., structures whose  operations and predicates are 1-Lipschitz.

%Our goal in this paper is to show that continuous logic and \L ukasiewicz-Pavelka logic are characterized by a particular model theoretic property (namely, the Omitting Types Property of Definition~\ref{Definition:LambdaOmittingTypesProperty}). The proof of this characterization is essentially the same for both logics. We shall give a unified exposition by working in the the general framework of basic continuous logic. The corresponding results for continuous logic and \L ukasiewicz-Pavelka logic will  then follow as corollaries, by localizing the general result to complete structures and to 1-Lischitz structures, accordingly, and invoking Proposition~\ref{P:connectives approximation} when necessary.

Below, we state, without proof, some of the fundamental properties of basic continuous logic.
%Each of these results can be proved using the same argument to cover all three logical frameworks; the proofs are direct applications of Remark~\ref{R:reduction of [0,1]-valued to 2-valued} and left to the reader (the compactness property follows by taking ultraproducts,  and the other two properties follow by straightforward adaptation of the classical arguments to the $[0,1]$-valued context).
Versions of Theorems~\ref{Theorem:CompactnessTheoremForContinuousLogic}, \ref{Theorem:TarskiVaughtTestForContinuousLogic}, and \ref{Theorem:DownwardLowenheimSkolemTheoremForContinuousLogicComplete},  were first proved by Henson in the mid 1970's, for Banach spaces instead of general metric structures, and using Henson's logical formalism of positive bounded formulas instead of $[0,1]$-valued logics. (Henson's apparatus was one of the main motivations for the development of continuous logic). 

The most distinctive model-theoretic property of basic continuous logic is compactness:

\begin{theorem}[Compactness of Basic Continuous Logic] \label{Theorem:CompactnessTheoremForContinuousLogic}
Let $\mathbf{S}$ be a signature and let $T$ be an $S$-theory.  If every finite subtheory of $T$  is satisfied by an $\mathbf{S}$-structure, then $T$ is satisfied by an $\mathbf{S}$-structure; furthermore, this structure can be taken to be complete.
\end{theorem}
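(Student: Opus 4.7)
My plan is to prove compactness via the ultraproduct construction for continuous metric structures, which is the standard tool in this setting. Given a finitely satisfiable $S$-theory $T$, let $I$ be the directed set of finite subsets of $T$, and for each $T_0 \in I$ pick an $\mathbf{S}$-structure $M_{T_0}$ with $M_{T_0}\models_{\cl} T_0$. Then fix an ultrafilter $\mathcal{U}$ on $I$ containing every set of the form $\{T_0 \in I : \phi \in T_0\}$ for $\phi \in T$; such an ultrafilter exists since these sets have the finite intersection property.

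The core of the argument is constructing the metric ultraproduct $M = \prod_{\mathcal{U}} M_{T_0}$. First define the pseudo-metric $d^M((a_{T_0}), (b_{T_0})) = \lim_{\mathcal{U}} d^{M_{T_0}}(a_{T_0}, b_{T_0})$; the limit exists in $[0,1]$ because bounded nets have ultralimits. Quotient by the relation identifying points at pseudo-distance zero to obtain a genuine metric space, which is automatically complete by a standard diagonal argument on representatives. For each $n$-ary operation symbol $f$ (resp.\ predicate symbol $P$) of $S$, the uniform continuity modulus assigned by $\mathbf{S}$ guarantees that defining $f^M$ coordinatewise (and $P^M$ via $\lim_{\mathcal{U}}$) descends to a well-defined function on the quotient and that the resulting interpretations respect the prescribed moduli, so $M$ is genuinely an $\mathbf{S}$-structure.

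Next I would establish \L o\'s's theorem for basic continuous logic: for every $S$-formula $\phi(\bar x)$ and tuples of representatives $(\bar a_{T_0})$,
\[
\phi^M[[\bar a]] = \lim_{\mathcal{U}} \phi^{M_{T_0}}[\bar a_{T_0}].
\]
This is proved by induction on $\phi$. The atomic cases are immediate from the construction, and the cases for the \L ukasiewicz implication and the rational constants follow because continuous connectives commute with ultralimits of bounded nets. With \L o\'s's theorem available, for each $\phi \in T$ the set $\{T_0 : \phi^{M_{T_0}} = 1\}$ contains $\{T_0 : \phi \in T_0\} \in \mathcal{U}$, so $\phi^M = 1$ and $M \models_{\cl} T$. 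The ``furthermore'' clause requiring a complete model is automatic, since metric ultraproducts are always complete.

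The main obstacle is the existential quantifier step in \L o\'s's theorem. For $\phi = \exists y\,\psi$, the inequality $\phi^M[[\bar a]] \le \lim_{\mathcal{U}} \phi^{M_{T_0}}[\bar a_{T_0}]$ is easy, since any witness $[b] \in M$ satisfies $\psi^M[[\bar a],[b]] = \lim_{\mathcal{U}} \psi^{M_{T_0}}[\bar a_{T_0}, b_{T_0}] \le \lim_{\mathcal{U}} \phi^{M_{T_0}}[\bar a_{T_0}]$. The reverse inequality requires manufacturing a witness: for each $\epsilon>0$ and each $T_0$, choose $b_{T_0} \in M_{T_0}$ with $\psi^{M_{T_0}}[\bar a_{T_0}, b_{T_0}] \ge \phi^{M_{T_0}}[\bar a_{T_0}] - \epsilon$ using the defining property of the supremum, then apply the induction hypothesis to $[(b_{T_0})]$ and let $\epsilon \to 0$. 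Keeping track of witnesses, moduli of continuity, and the interaction of $\lim_{\mathcal{U}}$ with $\sup$ is the only subtle part of the proof; everything else is bookkeeping.
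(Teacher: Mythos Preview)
Your proposal is correct and follows exactly the approach the paper indicates: the paper does not give a proof but simply remarks that the theorem ``can be proved by taking ultraproducts of $[0,1]$-valued structures'' and omits the details as not directly relevant. Your outline of the metric ultraproduct construction and the induction for \L o\'s's theorem (including the handling of the existential quantifier via $\epsilon$-approximate witnesses) is the standard argument and is sound.
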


Theorem~\ref{Theorem:CompactnessTheoremForContinuousLogic} can be proved  by taking ultraproducts of $[0,1]$-valued structures. The argument is simple, but we omit the details, as they are not directly relevant to the rest of the paper.

One  one obtains compactness for continuous logic by restricting Theorem~\ref{Theorem:CompactnessTheoremForContinuousLogic} to complete structures and invoking Remark~\ref{R:formulas approximation}. Restricting the theorem to 1-Lipschitz structures yields compactness of  \L ukasiewicz-Pavelka logic, but it must be noted that in this case the 1-Lipschitz condition makes it unnecessary to fix uniform continuity moduli; in other words, for  \L ukasiewicz-Pavelka logic, the statement of Theorem~\ref{Theorem:CompactnessTheoremForContinuousLogic} holds with  the signature $\mathbf S$ replaced by  a vocabulary $S$.\footnote{Note that a modulus of uniform continuity with $\delta(\epsilon)=\epsilon$ does not guarantee 1-Lipschitz continuity.}

If $M$ is a structure and $A$ is a subset of the universe of $M$, we denote by $\langle A\rangle$ the closure of $A$ under the functions of  $M$, and by $M\upharpoonright \langle A\rangle$ the substructure of $M$ induced by $\langle A\rangle$.

\begin{theorem}[Tarski-Vaught Test for Basic Continuous Logic]
\label{Theorem:TarskiVaughtTestForContinuousLogic}
Let $M$ be an $S$-structure. If $A$ is a subset of $M$ and $S(A)$ is extension of $S$ that includes a constant symbol for each element of $A$, the following conditions are equivalent:
\begin{enumerate}
\renewcommand{\theenumi}{\textup{(\arabic{enumi})}}
\renewcommand{\labelenumi}{\theenumi}

\item $M\upharpoonright \langle A\rangle \preceq_{\cl} M$, and $\langle A \rangle$ is contained in the closure of $A$ in $M$.

\item For every $S(A)$-formula $\varphi(x)$, if $N\models_{\cl} \varphi[b]$ for some element $b$ of $N$, then for every rational $r \in(0,1)$ there is an element $a_{r}$ of $M$ such that $N \models_{\cl} \varphi[a_{r}] \geq r$.
\end{enumerate}
\end{theorem}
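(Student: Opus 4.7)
The plan is to establish the two implications separately, treating (1) $\To$ (2) first and then the more substantive (2) $\To$ (1). Throughout I read the statement of (2) as: if $\varphi^{M}[b]=1$ for some $b\in M$, then for every rational $r\in(0,1)$ there is $a_r\in A$ with $\varphi^{M}[a_r]\ge r$.

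For (1) $\To$ (2): Suppose $N:=M\upharpoonright\langle A\rangle\preceq_{\cl}M$ and $\langle A\rangle\subseteq\overline A$. If $\varphi(x)$ is an $S(A)$-formula with $\varphi^{M}[b]=1$ for some $b\in M$, then $(\exists x\varphi)^{M}=1$; elementarity transfers this to $N$, giving $\sup_{a\in\langle A\rangle}\varphi^{M}[a]=1$, so for each rational $r<1$ some $a'\in\langle A\rangle$ satisfies $\varphi^{M}[a']>r$. Since $a'\in\overline A$ and $\varphi$ is uniformly continuous (by induction on its syntactic construction, using the uniform continuity moduli attached to the signature), picking $a_r\in A$ close enough to $a'$ yields $\varphi^{M}[a_r]\ge r$.

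For (2) $\To$ (1) I first establish $\langle A\rangle\subseteq\overline A$. Every $b\in\langle A\rangle$ equals $t^{M}$ for some closed $S(A)$-term $t$, and the formula $\neg d(x,t)$, whose interpretation at $x$ is $1-d^{M}(x,b)$, takes value $1$ at $b$; applying (2) produces $a_r\in A$ with $1-d^{M}(a_r,b)\ge r$, i.e., $d^{M}(a_r,b)\le 1-r$, so $b\in\overline A$.

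Next I prove $N:=M\upharpoonright\langle A\rangle\preceq_{\cl}M$ by induction on the complexity of $S(A)$-formulas $\varphi(\bar x)$, verifying $\varphi^{N}[\bar a]=\varphi^{M}[\bar a]$ for $\bar a\in N$. Atomic cases follow from $N$ being a substructure of $M$; the \L ukasiewicz implication and rational constants are immediate. The existential quantifier is the substantive step and the principal obstacle, because condition (2) only supplies witnesses for formulas that reach value $1$, whereas continuous-logic existential values are typically non-attained suprema. The key is a \L ukasiewicz--Pavelka boost. Let $s=(\exists x\varphi(x,\bar a))^{M}$; monotonicity gives $(\exists x\varphi)^{N}[\bar a]\le s$. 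For the reverse, given any rational $r<s$ pick $b\in M$ with $\varphi^{M}[\bar a,b]\ge r$, so that $\psi(x):=r\lukimp\varphi(x,\bar a)$ satisfies $\psi^{M}[b]=1$; then (2) supplies, for each rational $r'<1$, an element $a_{r'}\in A$ with $\psi^{M}[a_{r'}]\ge r'$, which unwinds via the definition of $\lukimp$ to $\varphi^{M}[\bar a,a_{r'}]\ge r+r'-1$. Sending $r\nearrow s$ and $r'\nearrow 1$ yields $\sup_{b\in\langle A\rangle}\varphi^{M}[\bar a,b]\ge s$, and the induction hypothesis identifies this supremum with $(\exists x\varphi)^{N}[\bar a]$, completing the induction.
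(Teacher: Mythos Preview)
Your argument is sound, and in fact supplies what the paper does not: the paper states Theorem~\ref{Theorem:TarskiVaughtTestForContinuousLogic} without proof, listing it among the ``fundamental properties of basic continuous logic'' and attributing its origin to Henson. Both directions you give are the standard ones for continuous logic, and the \L ukasiewicz--Pavelka boost $\psi(x):=r\lukimp\varphi(x,\bar a)$ is precisely the right device to convert approximate satisfaction into full satisfaction so that hypothesis~(2) becomes applicable.

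One small technical point worth making explicit in the existential step of (2)~$\To$~(1): the parameter tuple $\bar a$ ranges over $N=\langle A\rangle$, so $\psi(x)=r\lukimp\varphi(x,\bar a)$ is a priori an $S(\langle A\rangle)$-formula rather than an $S(A)$-formula, and condition~(2) speaks only of the latter. This is harmless because each $a_i\in\langle A\rangle$ equals $t_i^{M}$ for some closed $S(A)$-term $t_i$, and substituting $t_i$ for $a_i$ produces an $S(A)$-formula with the same interpretation in $M$; but the substitution deserves a sentence.
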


A direct consequence of Theorem~\ref{Theorem:TarskiVaughtTestForContinuousLogic} is the following property:

\begin{corollary}
 \label{C:StructuresAreElementarySubstructuresOfCompletions}
Let $M$ be a metric structure and let $N$ be a substructure of $M$.
\begin{enumerate}
\renewcommand{\theenumi}{\textup{(\arabic{enumi})}}
\renewcommand{\labelenumi}{\theenumi}
\item
If $N$ is dense in $M$, then $N\prec_{\cl}M$.
\item
If $N\prec_{\cl}M$, and $\overline{N}$ is the closure of $N$ in $M$, then $\overline{N}\prec_{\cl}M$
\end{enumerate}
\end{corollary}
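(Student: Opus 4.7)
The plan is to derive both parts from the Tarski--Vaught test (Theorem~\ref{Theorem:TarskiVaughtTestForContinuousLogic}), using the fact that in basic continuous logic the interpretation of any formula $\varphi(x_1,\dots,x_n)$ in a bounded metric structure is a uniformly continuous function of its arguments. This is a routine induction on the complexity of $\varphi$: for atomic formulas it is the content of the uniform continuity moduli of $\mathcal U$, for the Łukasiewicz implication one uses that $\lukimp\colon[0,1]^2\to[0,1]$ is itself 1-Lipschitz, the Pavelka constants contribute nothing, and for the existential quantifier $\sup_{a\in M}\varphi^M[a,\bar y]$ inherits the modulus of $\varphi^M$ in the $\bar y$ variables. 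Since our structures are $[0,1]$-valued, boundedness is automatic.

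For part (1), apply the Tarski--Vaught test to $A$ equal to the universe of $N$. Because $N$ is a substructure we have $\langle A\rangle = A$, and by density $\langle A\rangle$ sits inside the closure of $A$ in $M$. I then verify condition (2) of the test: given an $S(N)$-formula $\varphi(x)$ and $b\in M$ with $M\models_{\cl}\varphi[b]$ (so $\varphi^M[b]=1$), pick a rational $r\in(0,1)$. By the uniform continuity of $b'\mapsto \varphi^M[b']$ on $M$, choose $\delta>0$ so that $d^M(b,b')<\delta$ forces $|\varphi^M[b]-\varphi^M[b']|<1-r$. Density of $N$ produces $a_r\in N$ with $d^M(b,a_r)<\delta$, and then $\varphi^M[a_r]\ge r$, as needed.

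For part (2), I first note that $\overline N$ is genuinely a substructure of $M$: the operations of $M$ are continuous on bounded sets (in particular on $M$ itself, since $M$ is $[0,1]$-valued), so every operation maps $\overline N{}^{\,n}$ into $\overline N$, and each predicate of $M$ restricts to one on $\overline N$. Since $N$ is a dense substructure of $\overline N$, part (1) applied to $N\subseteq\overline N$ yields $N\prec_{\cl}\overline N$. Now, to conclude $\overline N\prec_{\cl}M$, I chase truth values through $N$. Given an $S$-formula $\varphi(\bar x)$ and $\bar a\in\overline N$, pick tuples $\bar a_k\in N$ with $\bar a_k\to\bar a$. Using uniform continuity of $\varphi$ interpreted in both $\overline N$ and $M$, together with $N\prec_{\cl}\overline N$ and the hypothesis $N\prec_{\cl}M$, I obtain
\[
\varphi^{\overline N}[\bar a]=\lim_k \varphi^{\overline N}[\bar a_k]
=\lim_k \varphi^{N}[\bar a_k]
=\lim_k \varphi^{M}[\bar a_k]
=\varphi^{M}[\bar a].
\]
By Remark~\ref{R:reduction of [0,1]-valued to 2-valued} this equality of truth values gives $(\overline N,a)_{a\in \overline N}\equiv_{\cl}(M,a)_{a\in\overline N}$, i.e., $\overline N\prec_{\cl}M$.

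The only nontrivial ingredient is the uniform continuity of formula interpretations in bounded $[0,1]$-valued structures; once that is in hand, both parts are immediate consequences of the Tarski--Vaught test. I would expect the minor technical point to be verifying closure of $\overline N$ under the operations in the general (not-necessarily-bounded-sort) case, but under our $[0,1]$-valued convention this reduces to continuity of the operations on all of $M$, so no complication arises.
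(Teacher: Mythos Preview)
Your proof is correct and follows the route the paper intends: the corollary is stated as ``a direct consequence'' of the Tarski--Vaught test, with no further details, and you have supplied exactly those details (uniform continuity of formula interpretations plus density). For part~(2) you take a slightly different path---rather than re-applying Tarski--Vaught to $A=\overline{N}$ with parameters approximated from $N$, you pass through $N\prec_{\cl}\overline{N}$ (from part~(1)) and compute truth values by a limit argument---but this is an equally natural unpacking of the same idea and arguably cleaner.
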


It follows from the main result of \cite{Iovino:2001} that no $0,1]$-valued logic for continuous metric structures  that extends basic continuous logic properly satisfies  the compactness property and the Tarski-Vaught Test.  In other words, basic continuous logic is maximal with respect to these two properties. 

Now we turn our attention to the downward L\"{o}wenheim-Skolem-Tarski Theorem for basic continuous logic.  We state two versions of this theorem; the first version is for arbitrary continuous metric structures (Theorem~\ref{Theorem:DownwardLowenheimSkolemTheoremForContinuousLogic}) and the second one is for complete structures (Theorem~\ref{Theorem:DownwardLowenheimSkolemTheoremForContinuousLogicComplete}). In the continuous logic literature, where complete structures are emphasized, only the second version is usually stated; however, the standard argument used to prove the second version proceeds by first showing that the first version holds, and then taking completions and invoking Corollary~\ref{C:StructuresAreElementarySubstructuresOfCompletions}. We state the first version as a separate theorem, for it will be needed in the proof of Lemma~\ref{Lemma:ProjectionsAreContinuousOpenAndxive}). 

\begin{theorem}[L\"{o}wenheim-Skolem-Tarski Theorem for Continuous Metric Structures]\label{Theorem:DownwardLowenheimSkolemTheoremForContinuousLogic}
For every $S$-structure $M$ and every subset $A$ of $M$ there exists a substructure $N$ of $M$ such that 
\begin{enumerate}
\renewcommand{\theenumi}{\textup{(\arabic{enumi})}}
\renewcommand{\labelenumi}{\theenumi}
\item
$N\prec_{\cl} M$,
\item
$A\subseteq N$,
\item
$|N|\le |A|+|S|+\aleph_0$.
\end{enumerate}
\end{theorem}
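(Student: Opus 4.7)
The approach is a chain construction designed to verify the Tarski--Vaught Test (Theorem~\ref{Theorem:TarskiVaughtTestForContinuousLogic}). Set $\kappa = |A| + |S| + \aleph_0$. I would build inductively an ascending sequence $A = A_0 \subseteq A_1 \subseteq A_2 \subseteq \cdots$ of subsets of $M$, each of cardinality at most $\kappa$, by alternating two steps at each stage. First, close $A_n$ under the interpretations of the operation symbols of $S$ (in particular adjoining all interpretations of constant symbols); since there are at most $\kappa$ operation symbols each of finite arity, this preserves the cardinality bound. Second, enumerate the $S(A_n)$-formulas $\varphi(x)$ in a single free variable, of which there are at most $\kappa$, and for each such $\varphi(x)$ and each rational $r \in (0,1) \cap \mathbb{Q}$ satisfying $\sup_{b \in M}\varphi[b]^{M} \geq r$, pick a witness $b_{\varphi,r} \in M$ with $M \models_{\cl}\varphi[b_{\varphi,r}] \geq r$ and adjoin it to $A_{n+1}$. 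The countable set of rationals contributes only an $\aleph_0$-factor, so $|A_{n+1}| \leq \kappa$.

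Let $A_\omega = \bigcup_{n < \omega} A_n$ and set $N = M \upharpoonright A_\omega$. By construction $A_\omega$ is closed under the operations of $M$, so $\langle A_\omega \rangle = A_\omega$ and $N$ is a genuine substructure of $M$ that inherits the uniform continuity moduli of $\mathcal{U}$ automatically. To conclude $N \preceq_{\cl} M$ via the Tarski--Vaught Test, one verifies condition~(2): any $S(A_\omega)$-formula $\varphi(x)$ involves only finitely many parameters, which all lie in some $A_n$, so $\varphi$ is already an $S(A_n)$-formula, and the witness-adding step at stage $n+1$ supplied an element $a_r \in A_{n+1} \subseteq A_\omega$ for every rational $r$ with $\sup_{b \in M}\varphi[b]^{M} \geq r$. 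Since $|A_\omega| \leq \aleph_0 \cdot \kappa = \kappa$, the three required clauses (elementarity, $A \subseteq N$, and the cardinality bound) all follow.

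The work is essentially bookkeeping, and I do not anticipate any serious obstacle. The one feature specific to the $[0,1]$-valued setting, as opposed to the classical Löwenheim--Skolem argument, is that the Tarski--Vaught Test here demands approximate witnesses at each rational threshold $r<1$ rather than a single exact existential witness; this merely replaces one witness by countably many and is absorbed harmlessly into the $\aleph_0$ factor in $\kappa$. The absence of a requirement that $N$ be \emph{complete} is essential: it is precisely what allows us to take $N = M \upharpoonright A_\omega$ directly, without passing to the metric completion. That completeness refinement is handled separately in Theorem~\ref{Theorem:DownwardLowenheimSkolemTheoremForContinuousLogicComplete} by applying Corollary~\ref{C:StructuresAreElementarySubstructuresOfCompletions} to the $N$ produced here.
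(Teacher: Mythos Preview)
The paper does not actually prove this theorem: it is one of the ``fundamental properties of basic continuous logic'' that the authors explicitly ``state, without proof,'' remarking only that ``the standard argument'' proves this version first and then deduces Theorem~\ref{Theorem:DownwardLowenheimSkolemTheoremForContinuousLogicComplete} via completions. Your proposal is precisely that standard chain construction---build a countable tower of witness sets tailored to condition~(2) of the Tarski--Vaught Test (Theorem~\ref{Theorem:TarskiVaughtTestForContinuousLogic}), close under the operations so that $\langle A_\omega\rangle = A_\omega$, and read off elementarity---and it is correct; the only adjustment relative to the classical case, as you note, is the countable family of rational thresholds, which is harmless for the cardinality bound.
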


The version for complete metric structures is analogous, but in this context the ``correct'' measure of size of a structure is its density, rather than its cardinality. The \emph{density character} (or simply \emph{density}) of a metric space $M$, denoted $\density(M)$, is the smallest cardinal of a dense subset of $M$.  

\begin{theorem}[L\"{o}wenheim-Skolem-Tarski Theorem for Complete Metric Structures]
\label{Theorem:DownwardLowenheimSkolemTheoremForContinuousLogicComplete}
For every complete $S$-structure $M$ and every subset $A$ of $M$ there exists a complete substructure $N$ of $M$ such that 
\begin{enumerate}
\renewcommand{\theenumi}{\textup{(\arabic{enumi})}}
\renewcommand{\labelenumi}{\theenumi}
\item
$N\prec_{\cl} M$,
\item
$A\subseteq N$,
\item
$\density(N)\le \density(A)+|S|+\aleph_0$.
\end{enumerate}
\end{theorem}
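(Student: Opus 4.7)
The plan is to reduce to the already-established Löwenheim-Skolem-Tarski theorem for arbitrary continuous metric structures (Theorem~\ref{Theorem:DownwardLowenheimSkolemTheoremForContinuousLogic}) and then pass to the metric completion inside $M$, using Corollary~\ref{C:StructuresAreElementarySubstructuresOfCompletions} to preserve elementarity.

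First I would choose a dense subset $A_{0}$ of $A$ of cardinality $\density(A)$. Applying Theorem~\ref{Theorem:DownwardLowenheimSkolemTheoremForContinuousLogic} to $M$ and $A_{0}$ produces a substructure $N_{0}\preceq_{\cl} M$ with $A_{0}\subseteq N_{0}$ and $|N_{0}|\le |A_{0}|+|S|+\aleph_{0}=\density(A)+|S|+\aleph_{0}$. Now let $N$ be the closure of (the universe of) $N_{0}$ inside $M$; because every operation and predicate of $M$ is uniformly continuous on bounded sets (Definition~\ref{Definition:MetricStructure}), the operations of $M$ restrict to continuous operations on $N$, so $N$ inherits the structure of an $S$-substructure of $M$. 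Since $M$ is complete and $N$ is closed in $M$, $N$ is itself complete. By part~(2) of Corollary~\ref{C:StructuresAreElementarySubstructuresOfCompletions}, $N_{0}\preceq_{\cl} M$ implies $N\preceq_{\cl} M$.

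It remains to check the three conclusions. Elementarity was just verified. For the inclusion $A\subseteq N$: $N$ is a closed subset of $M$ that contains $A_{0}$, and $A_{0}$ is dense in $A$, so $A\subseteq\overline{A_{0}}\subseteq N$. For the density bound: $N_{0}$ is a dense subset of $N$, so $\density(N)\le|N_{0}|\le\density(A)+|S|+\aleph_{0}$.

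I do not expect any serious obstacle; the only point requiring a moment's care is that the closure of a substructure is again a substructure, which is precisely the content of the uniform continuity condition in the definition of a continuous metric structure (and is the same fact that underlies the existence of the completion of a structure described in Section~\ref{subsection:Structures}).
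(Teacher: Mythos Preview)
Your proposal is correct and follows exactly the approach the paper itself outlines (though does not spell out in detail): the paper states that ``the standard argument used to prove the second version proceeds by first showing that the first version holds, and then taking completions and invoking Corollary~\ref{C:StructuresAreElementarySubstructuresOfCompletions}.'' Your extra step of first passing to a dense $A_0\subseteq A$ of size $\density(A)$ is precisely what is needed to convert the cardinality bound of Theorem~\ref{Theorem:DownwardLowenheimSkolemTheoremForContinuousLogic} into the density bound required here.
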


\subsection{Relativizations to discrete predicates}

It is often helpful to know that the fact that a predicate is discrete, in the sense that it only takes on values in $\{0,1\}$, can be expressed using formulas of basic continuous logic:

\begin{definition} \label{Definition:DiscretePredicate}
Let $M$ be an $S$-structure and let $P$ a predicate symbol of $S$.  We define $\Discrete \big( P(\bar{x}) \big)$ to be the $S$-formula
\[
	P(\bar{x}) \vee \neg P(\bar{x}),
\]
and call $P^{M}$ \emph{discrete}\index{structure!predicate!discrete} if $M \models \forall \bar{x} \, \Discrete \big( P(\bar{x}) \big)$.
\end{definition}

Definition~\ref{Definition:DiscretePredicate} will play an important role in the proof of the Main Theorem (Theorem~\ref{Theorem:TheMainTheorem}). 

Let $S$ be a vocabulary and let $P(x)$ be a monadic predicate not in $S$. If $M$ is a $(S\cup\{P\})$-structure such that $P^{M}$ is discrete, and a valid $S$-structure of $\mathcal{L}$ is obtained by restricting the universe of $M$ to $\{\,a\in M\mid M\models_{\mathcal{L}}P[a]\,\}$, we denote this structure by $M\upharpoonright \{x \mid P(x)\}$.
Note that if $M$ is complete, the continuity of $P$ ensures that the preceding structure, when defined,  is complete. If $\phi$ is an $S$-formula of continuous logic,  the \emph{relativization} of $\phi$ to $P$, denoted $\phi^{\{x\mid P(x)\}}$ or $\phi^P$, is the $(S\cup\{P\})$-formula  defined by the following recursive rule:

\begin{itemize}
\item
If $\phi$ is atomic, then $\phi^{\{x\mid P(x)\}}$ is $\phi$.
\item
If $\phi$ is of the form $C(\psi_1,\dots,\psi_n)$, where $C$ is a connective, then $\phi^{\{x\mid P(x)\}}$ is $C(\psi_1^{\{x\mid P(x)\}},\dots,\psi_n^{\{x\mid P(x)\}})$.
\item
If $\phi$ is of the form $\exists x \psi$, then $\phi^{\{x\mid P(x)\}}$ is $\exists y(\psi(y) \land \psi^{\{x\mid P(x)\}})$.
\item
If $\phi$ is of the form $\forall x \psi$, then $\phi^{\{x\mid P(x)\}}$ is $\forall y(\neg \psi(y) \lor \psi^{\{x\mid P(x)\}})$.
\end{itemize}
Then, if $M$ and $P$ are as above, we have
\[
(\phi^{\{x\mid P(x)\}})^M=
\phi^{M\upharpoonright \{x \mid P(x)\}}.
\]

In the preceding rule, instead of the monadic predicate $P(x)$ we may have a binary predicate symbol $R(x,y)$; this shows that the logics discussed in this section have the following property:

\begin{definition}\label{D:logic permits relativization}
We will say that a $[0,1]$-valued logic $\mathcal{L}$ permits \emph{relativization to definable families of predicates} if for every vocabulary $S$, every $S$-sentence $\phi$ and every predicate predicate symbol $R(x,y)$ not in $S$ there is an $(S\cup\{R\})$-formula $\psi(x)$, denoted $\phi^{\{y\mid R(x,y)\}}(x)$, such that the following holds: whenever $M$ is an $(S\cup\{R\})$-structure such that for every $a$ in $M$,
\begin{itemize}
\item
either $M\models R[a,b]$ or $M\models \neg R[a,b]$ for every $b$ in $M$, and
\item
$(M, a)\upharpoonright \{y \mid R(x,y)\}$
 is defined as a structure of $\mathcal{L}$, 
 \end{itemize}
 one has
 \[
(\phi^{\{y\mid R(x,y)\}})^M [a]= \phi^{M\upharpoonright \{y \mid R(x,y)\}}[a].
\]
In this case, we call the formula $\phi^{\{y\mid R(x,y)\}}(x)$ a \emph{relativization} of $\phi$ to ${\{\,y\mid R(x,y)\,\}}$.
\end{definition}

A logic $\mathcal L$ has the  L\"{o}wenheim-Skolem property for sentences if every sentence of $\mathcal L$  that has a model has a countable model. The first author has proved the following version of Lindstr\"om's First Theorem~\cite{Caicedo:201?}:

\begin{theorem}
Let $\mathcal{L}$ be an extension of continuous logic  that satisfies the following properties:
\begin{itemize}
\item
Closure under the \L ukasiewicz-Pavelka connectives and relativization to discrete predicates,
\item
Compactness,
\item
The L\"{o}wenheim-Skolem property for sentences.
\end{itemize}
Then $\mathcal{L}$ is equivalent to continuous logic.

A similar result holds for extensions of first order \L ukasiewicz-Pavelka logic, under the additional assumption that any consistent theory
has a complete model.
\end{theorem}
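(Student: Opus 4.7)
The plan is to follow Lindstr\"om's original proof of his first theorem, adapted to the metric setting. By closure under the \L ukasiewicz--Pavelka connectives and Remark~\ref{R:reduction of [0,1]-valued to 2-valued}, it suffices to prove that every two-valued $\mathcal{L}$-sentence is equivalent to a continuous logic sentence, since for a general $\phi$ the truth value $\phi^M$ is determined by the family $\{\,\phi \geq r \mid r \in \mathbb{Q}\cap[0,1]\,\}$.

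Suppose, toward contradiction, that a two-valued sentence $\psi$ of $\mathcal{L}$ is not equivalent to any continuous logic sentence. Using compactness of continuous logic (Theorem~\ref{Theorem:CompactnessTheoremForContinuousLogic}) together with closure of $\mathcal{L}$ under finite conjunctions, I would first produce two complete $S$-structures $M$ and $N$ with the same continuous logic theory, $M \equiv_{\cl} N$, yet $M \models_{\mathcal{L}} \psi$ and $N \nmodels_{\mathcal{L}} \psi$. The next step is to exhibit a single structure in an enriched vocabulary that embeds $M$ and $N$ as relativizations to two discrete monadic predicates $P$ and $Q$, and that additionally carries a discretely coded back-and-forth system witnessing an ``approximate isomorphism'' between the two relativizations. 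Closure of $\mathcal{L}$ under relativization to discrete predicates (Definition~\ref{D:logic permits relativization}) makes the sentence $\psi^{\{x\mid P(x)\}} \wedge \neg \psi^{\{x\mid Q(x)\}}$ an $\mathcal{L}$-sentence, and compactness together with $M \equiv_{\cl} N$ allows the addition of fresh discrete predicate symbols $I_n$ whose interpretations code $n$-approximate finite partial isometries of tolerance $1/n$ on a prescribed finite list of basic continuous logic formulas, together with back-and-forth extension axioms on both sides.

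For each finite depth $n$ the required data can be packaged into a single $\mathcal{L}$-sentence $\Phi_n$ over a finite vocabulary. Applying the L\"owenheim--Skolem property for sentences to each $\Phi_n$ yields a countable model, and a further application of $\mathcal{L}$-compactness merges these into one countable structure $R$ realizing all $\Phi_n$ simultaneously along with $\psi^{\{x\mid P(x)\}} \wedge \neg\psi^{\{x\mid Q(x)\}}$. On the countable substructures $R\upharpoonright\{x\mid P(x)\}$ and $R\upharpoonright\{x\mid Q(x)\}$ the family $\{I_n\}$ organizes, by a standard back-and-forth argument as $1/n \to 0$, into a surjective isometry between their metric completions that commutes with all operations and preserves all predicates, that is, a metric isomorphism. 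Corollary~\ref{C:StructuresAreElementarySubstructuresOfCompletions} lifts the relativized $\psi$ and $\neg\psi$ from $R$ to these completions, and the Isomorphism Property of $\mathcal{L}$ then forces the two completions to agree on $\psi$, contradicting the construction. The variant for predicate \L ukasiewicz--Pavelka logic is obtained by carrying out the same argument inside the class of 1-Lipschitz structures and invoking the additional ``every consistent theory has a complete model'' hypothesis in place of the automatic completeness clause of Theorem~\ref{Theorem:CompactnessTheoremForContinuousLogic}.

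The principal obstacle is the joint design of the sentences $\Phi_n$ so that a countable model of all of them genuinely yields a metric isomorphism on completions rather than merely an unrelated family of approximate partial ones. This requires the back-and-forth axioms to express exactly the right uniform approximation on basic continuous logic formulas of bounded syntactic complexity, and it uses in an essential way the closure of $\mathcal{L}$ under the \L ukasiewicz--Pavelka connectives (to write the rational ``$\leq r$'' thresholds needed to make the $I_n$ discrete), closure under relativization (to separate the two sorts and to interpret formulas within each), and compactness of $\mathcal{L}$ (to assemble countably many tolerance levels into a single object to which the L\"owenheim--Skolem hypothesis can be applied).
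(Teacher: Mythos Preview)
The paper does not actually prove this theorem; it is stated and attributed to an unpublished manuscript of the first author (reference~\cite{Caicedo:201?}), so there is no in-paper proof to compare against. Your overall strategy is the correct Lindstr\"om template adapted to the metric setting: reduce to showing $\equiv_{\cl}\Rightarrow\equiv_{\mathcal L}$, amalgamate two $\cl$-equivalent structures that disagree on some $\mathcal L$-sentence into a single structure carrying a coded approximate back-and-forth system, pass to a countable model, run the back-and-forth to obtain a metric isomorphism of completions, and contradict the Isomorphism Property. The metric refinements you flag (tolerances tending to~$0$, use of Corollary~\ref{C:StructuresAreElementarySubstructuresOfCompletions}) are also the right ones.

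There is, however, a real gap at the sentence ``a further application of $\mathcal{L}$-compactness merges these into one countable structure $R$ realizing all $\Phi_n$ simultaneously.'' Compactness yields a model of the theory $\{\Phi_n:n<\omega\}$ but says nothing about its cardinality, and the L\"owenheim--Skolem property you are given applies only to single sentences, not to infinite theories; so as written you never obtain a countable model carrying the full back-and-forth system. This is exactly the point at which Lindstr\"om's argument is nontrivial, and the missing device is to \emph{internalize} the depth parameter. One replaces the external family $\{I_n\}_{n<\omega}$ by a single relation $I(p,\bar x,\bar y)$ with $p$ ranging over a discrete linear order $(P,\triangleleft)$ that lives inside the structure, so that a single sentence $\Psi$ expresses the entire back-and-forth scheme (extension from level $p$ to any $\triangleleft$-smaller level, with tolerance coded by the level). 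Since $M\equiv_{\cl}N$, the sentence $\Psi$ has models with $P$ of every finite length; compactness of $\mathcal L$, applied with fresh constants forming a descending chain, then gives a model $A$ in which $P^A$ is not well-founded. Expanding $A$ by a further discrete unary predicate $V$ that picks out an $\omega^{*}$-chain inside $P^A$, one obtains a model of a \emph{single} sentence $\Psi'$ (namely $\Psi$ together with the assertion that $V\subseteq P$ is nonempty, has no $\triangleleft$-least element, and is closed under immediate $\triangleleft$-predecessor), and it is to $\Psi'$ that the L\"owenheim--Skolem property for sentences is finally applied. In the resulting countable model the chain $V$ supplies the infinite descent along which the tolerances shrink and the back-and-forth converges to an isometry of completions. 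Without this internalization step your argument does not close.
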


\subsection{Beyond the interval $[0,1]$}\label{S: beyond [0,1]}
For simplicity, we have focused our attention on continuous metric structures of diameter bounded by 1. The formalism of basic continuous logic can be adapted to cover bounded structures of arbitrary diameter.  However,  the resulting logic is not compact, but rather locally compact, in the following sense:

\begin{theorem}
\label{T:local compactness for unbounded structures}
Let $\mathbf{S}$ be a signature. For every bounded  $\mathbf{S}$-structure $M$ there is an $S$-sentence $\varphi$  and a rational $r\in(0,1)$ such that $M \models \phi$ and the following property holds. If $T$ is an $S$-theory such that every finite subset of $T \cup\{\varphi\ge r\}$ is satisfied by an $\mathbf{S}$-structure, then $T \cup\{\varphi\ge r\}$ is satisfied by an $\mathbf{S}$-structure.
\end{theorem}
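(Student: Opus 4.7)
The plan is to reduce the local compactness statement to the compactness theorem for $[0,1]$-valued structures (Theorem~\ref{Theorem:CompactnessTheoremForContinuousLogic}) via a rescaling argument. Given a bounded $\mathbf{S}$-structure $M$, fix a positive integer $K$ strictly larger than the diameter of $M$. In the extended formalism, the diameter of a structure is witnessed by sentences involving suitable bounded continuous functions of $d$; I take $\varphi$ to be such a sentence, scaled so that $\varphi^N \in [0,1]$ for every bounded $N$, so that $\varphi^M = 1$, and so that the condition $\varphi^N \ge r$ forces the diameter of $N$ to be at most $K+1$ once $r$ is a rational sufficiently close to $1$. Fix such an $r$.

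For any $\mathbf{S}$-structure $N$ of diameter at most $K+1$, the rescaled metric $\tilde d := d/(K+1)$ turns $N$ into a $[0,1]$-valued structure $\tilde N$ in a rescaled signature $\tilde{\mathbf{S}}$, obtained from $\mathbf{S}$ by adjusting its uniform continuity moduli by the factor $1/(K+1)$. This correspondence is a bijection between $\mathbf{S}$-structures of diameter $\le K+1$ and $\tilde{\mathbf{S}}$-structures. It induces a syntactic translation $\psi \mapsto \tilde\psi$ from $S$-sentences of the extended logic to $\tilde S$-sentences of ordinary $[0,1]$-valued basic continuous logic, defined by absorbing the factor $1/(K+1)$ into the Pavelka constants appearing in atomic subformulas of the shape $d(t_1,t_2)$, so that $\psi^N = \tilde\psi^{\tilde N}$ whenever $N$ has diameter $\le K+1$.

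Now, given a theory $T$ as in the statement, let $\tilde T := \{\,\tilde\psi \mid \psi \in T\,\} \cup \{\tilde\varphi \ge r\}$. Every finite subset of $\tilde T$ is satisfiable by a $[0,1]$-valued $\tilde{\mathbf{S}}$-structure, obtained by rescaling the bounded $\mathbf{S}$-model supplied by the hypothesis. Theorem~\ref{Theorem:CompactnessTheoremForContinuousLogic} then produces a $\tilde{\mathbf{S}}$-structure satisfying $\tilde T$, which rescales back (via multiplication of distances by $K+1$) to an $\mathbf{S}$-structure of diameter $\le K+1$ satisfying $T \cup \{\varphi \ge r\}$, as required.

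The main subtlety will be to define the syntactic rescaling $\psi \mapsto \tilde\psi$ with sufficient care that it commutes with the full inductive build-up of formulas --- the \L ukasiewicz implication, the Pavelka constants and thresholds of the form $\ge r$, and the existential quantifier --- and to verify that the rescaled uniform continuity moduli of $\tilde{\mathbf{S}}$ are precisely what is needed so that the ultraproduct argument underlying Theorem~\ref{Theorem:CompactnessTheoremForContinuousLogic} produces a bona fide $\tilde{\mathbf{S}}$-structure rather than a mere metric structure. Once this bookkeeping is in place, the transfer to and from the $[0,1]$-valued setting is routine.
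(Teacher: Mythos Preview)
The paper does not prove this theorem: it is stated without proof in the brief Section~\ref{S: beyond [0,1]}, which only sketches how the framework might extend beyond diameter-$1$ structures. There is therefore no proof in the paper to compare your proposal against.

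Your rescaling strategy is the natural approach and is essentially correct in outline. Two points deserve attention as you fill in the details. First, the paper never makes precise what the ``extended formalism'' for arbitrary bounded diameter actually is---in particular, how atomic subformulas $d(t_1,t_2)$ whose values may exceed $1$ are to interact with the {\L}ukasiewicz connectives, which are defined only on $[0,1]$. Your argument implicitly commits to one such extension; you should make that choice explicit, since the correctness of the syntactic translation $\psi\mapsto\tilde\psi$ depends on it. Second, a ``$[0,1]$-valued structure'' in the paper's sense requires the \emph{predicates}, not just the metric, to take values in $[0,1]$ (see the opening of Subsection~\ref{subection:RealValuedLogics}); if the extended formalism also permits bounded $\mathbb{R}$-valued predicates, then your rescaling must treat those as well, and the sentence $\varphi$ must witness bounds on predicate values in addition to the diameter. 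Once these conventions are pinned down, the reduction to Theorem~\ref{Theorem:CompactnessTheoremForContinuousLogic} goes through as you describe.
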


The main results of the paper, namely, Theorems~\ref{Theorem:GeneralOmittingTypesTheorem} and~\ref{Theorem:TheMainTheorem} hold under this wider semantics, although the second result holds
locally only. One way to obtain this generalization is to use $0$ instead of 1 as designated truth value for $\models$, and the truncated subtraction on $[0,1]$ instead of the \L ukasiewicz implication.

%A model theoretic formalism for general unbounded continuous metric structures was developed by Henson  ca.\ 2000, but never published. Henson's formalism is similar to that developed by him and presented in~\cite{Henson-Iovino:2002}, but more general, since the latter is for specific classes of structures, namely, structures based on normed spaces.

\section{Logics and Topologies} \label{Section:TheLogicalTopology}

In this section we associate with every logic $\mathcal{L}$ a topology that we call the \emph{logical topology} of $\mathcal{L}$.  The idea of using the logical topology  to study properties of abstract logics is due to the first author \cite{Caicedo:1993,Caicedo:1995,Caicedo:1999}.  We will focus on logics whose logical topology is regular.

If  $\mathcal{L}$ is a logic and $\mathbf{S}$ is a signature, we will denote by $\Str_{\mathcal L}(\mathbf{S})$ the class of $\mathbf{S}$-structures of $\mathcal{L}$. For every $S$-theory $T$ of $\mathcal{L}$, define
\[
\Mod_{\mathbf{S}}(T) =\{\,M\in\Str_{\mathcal L}(\mathbf{S}) \mid M\models_{\mathcal{L}}T \,\}.
\]

The following is the main definition of this section. This concept will play a central role in our arguments.

\begin{definition} \label{Definition:LogicalTopology}
The  \emph{logical topology}\index{logical topology} on $\Str_{\mathcal L}(\mathbf{S})$, denoted $\tau_{\mathcal{L}}(\mathbf{S})$, is the topology on $\Str_{\mathcal L}(\mathbf{S})$ whose closed classes are those of  $\Mod_{\mathbf{S}}(T)$, where $T$ is a theory.
\end{definition}

The following proposition follows directly from the definitions.

\begin{proposition} \label{Proposition:EquivalenceOfLogicalAndTopologicalCompactness}
Let $\mathcal{L}$ be a logic.
\begin{enumerate}
\renewcommand{\theenumi}{\textup{(\arabic{enumi})}}
\renewcommand{\labelenumi}{\theenumi}

\item $\mathcal{L}$ is compact if and only if the space $(\,\Str_{\mathcal L}(\,\mathbf{S}),\tau_{\mathcal{L}}(\mathbf{S})\,)$ is compact, for every signature $\mathbf{S}$.

\item $\mathcal{L}$ is $\lambda$-compact if and only if the space $(\,\Str_{\mathcal L}(\mathbf{S}),\tau_{\mathcal{L}}(\mathbf{S})\,)$ is $\lambda$-compact, for every signature $\mathbf{S}$.
\end{enumerate}
\end{proposition}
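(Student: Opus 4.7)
The plan is to translate between theories and families of closed sets via a simple dictionary based on the explicit description of the closed sets. For each $S$-sentence $\phi$ write $[\phi] := \Mod_{\mathbf{S}}(\{\phi\})$; then every closed set has the form $\Mod_{\mathbf{S}}(T) = \bigcap_{\phi \in T}[\phi]$, and since $\mathcal{L}$ is closed under finite conjunctions (Convention~\ref{Convention:AssumedAndInfinitaryConnectives}) the family $\{[\phi]\}_\phi$ is closed under finite intersections, so it is a basis of closed sets for $\tau_{\mathcal{L}}(\mathbf{S})$. The dictionary, immediate from the definition of $\Mod_{\mathbf{S}}$, reads: a subfamily $\{[\phi_\alpha]\}_\alpha$ has the finite intersection property if and only if the theory $\{\phi_\alpha\}_\alpha$ is finitely satisfiable, and $\bigcap_\alpha[\phi_\alpha]\neq\emptyset$ if and only if this theory is satisfiable.

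For part~(1), the direction $(\Leftarrow)$ is immediate: a finitely satisfiable theory $T$ yields a family $\{[\phi]\}_{\phi \in T}$ of closed sets with FIP, and compactness of the space provides a point in the intersection, which is precisely a model of $T$. For $(\Rightarrow)$: given $\{C_i\}_{i\in I}$ with FIP, write $C_i=\bigcap_{\phi \in T_i}[\phi]$ and set $T=\bigcup_i T_i$; any finite subset of $T$ lies in some $T_{i_1}\cup\dots\cup T_{i_n}$, and the nonempty intersection $C_{i_1}\cap\dots\cap C_{i_n}$ supplies a model. Hence $T$ is finitely satisfiable, and compactness of $\mathcal{L}$ yields a model in $\bigcap_i C_i$.

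Part~(2) proceeds by the same dictionary, now carrying the cardinality constraint: theories of size $\leq\lambda$ correspond bijectively to families of $\leq\lambda$ basic closed sets $\{[\phi_\alpha]\}_{\alpha<\lambda}$. The key point of care is to confirm that $\lambda$-compactness of the space $(\Str_{\mathcal L}(\mathbf{S}),\tau_{\mathcal{L}}(\mathbf{S}))$ is equivalent to the basis-level version: every family of $\leq\lambda$ basic closed sets with FIP has nonempty intersection. This is an Alexander-subbase-style reduction, and it goes through cleanly because our basis is already stable under finite intersections. Granting this reduction, the equivalence with $\lambda$-compactness of $\mathcal{L}$ is a direct rewriting via the dictionary.

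The main (mild) obstacle I foresee is exactly this basis-to-topology reduction for uncountable $\lambda$: a family of $\leq\lambda$ arbitrary closed sets $\{C_i\}_{i<\lambda}$ may a priori correspond to a theory $T=\bigcup_i T_i$ of size strictly greater than $\lambda$, so one cannot naively apply $\lambda$-compactness of $\mathcal{L}$ to $T$. The remedy is to phrase topological $\lambda$-compactness in the equivalent basis form and observe that this matches the logical formulation exactly; once this is done the proposition follows directly from the definitions, as the author asserts.
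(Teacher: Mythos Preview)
Your treatment of part~(1) via the FIP dictionary is correct and matches what the paper leaves implicit (the paper gives no argument beyond ``follows directly from the definitions''). You also correctly isolate the only genuine subtlety in part~(2): a family of at most $\lambda$ closed sets $\{C_i\}$ may require a theory $T=\bigcup_i T_i$ of size strictly greater than $\lambda$ to describe, so logical $\lambda$-compactness does not directly yield $\bigcap_i C_i\neq\emptyset$. Your proposed fix, however, does not work. The claim that basis-level $\lambda$-compactness is equivalent to full topological $\lambda$-compactness via an ``Alexander-subbase-style reduction'' is false in general; Alexander's argument uses Zorn's lemma in a way that ignores the cardinality bound, and closure of the basis under finite intersections does not rescue it. A counterexample with $\lambda=\omega$: let $X=\omega_1$ carry the discrete topology, with $\mathcal B$ the family of finite and cofinite subsets of $X$ as a basis of closed sets (this $\mathcal B$ is closed under both finite unions and finite intersections). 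Every countable $\mathcal F\subseteq\mathcal B$ with FIP has nonempty intersection: if only cofinite sets appear the intersection is co-countable, hence nonempty; if finite sets appear, their intersection stabilizes at a nonempty finite $G$, and FIP forces some point of $G$ to lie in every cofinite member of $\mathcal F$. Yet $X$ is not countably compact, since the sets $\omega\setminus n\subseteq\omega_1$ for $n<\omega$ are closed, have FIP, and empty intersection.

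The honest resolution is definitional rather than mathematical. The paper never defines ``$\lambda$-compact'' for a topological space, and since it declares the proposition immediate from the definitions, the intended meaning is evidently the basis-level one you already identified: every family of at most $\lambda$ basic closed classes $\Mod_{\mathbf S}(\phi)$ with FIP has nonempty intersection (equivalently, every cover by at most $\lambda$ basic open classes has a finite subcover). With that reading the proposition is a tautological restatement of Definition~\ref{Definition:CompactnessProperty} via your dictionary, and no reduction lemma is needed. Your instinct to match the logical formulation to the basis form was exactly right; the error was only in asserting that the basis and general topological forms coincide.
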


Since we are assuming that all logics are closed under finite disjunctions (see Convention~\ref{Convention:AssumedAndInfinitaryConnectives}), the classes of the form
\[
\Mod_{\mathbf{S}}(\varphi) =\{\,M\in\Str_{\mathcal L}(\mathbf{S}) \mid M\models_{\mathcal L}\varphi \,\},
\]
where $\varphi\in\Sent_{\mathcal L}(S)$, are closed under finite unions. These classes form a base of closed classes for the logical topology on $\Str_{\mathcal L}(\mathbf{S})$.

\begin{convention}
Let $\mathcal L$ be a $[0,1]$-valued logic that is closed under the basic connectives and let $\mathbf S$ be a signature. If $\phi$ is a sentence of $\mathcal L$, and $r\in[0,1]$ is a constant of $\mathcal{L}$,  we write
\[
\Mod_{\mathbf{S}}(\phi<r)\qquad\text{and} \qquad \Mod_{\mathbf{S}}(\phi>r),
\]
respectively, as abbreviations for the classes
\[
\Str_{\mathcal L}(\mathbf{S})\setminus\Mod_{\mathbf{S}}(\phi\ge r)\
\qquad\text{and}\qquad 
\Str_{\mathcal L}(\mathbf{S})\setminus\Mod_{\mathbf{S}}(\phi\le r).
\]
\end{convention}

Note that the classes of the form $\Mod_{\mathbf{S}}(\phi<1)$ where $\phi\in\Sent_{\mathcal L}(S)$ for a base for $\tau_{\mathcal{L}}$. By replacing $\phi$ with $\neg \phi$, it follows that the classes  of the form $\Mod_{\mathbf{S}}(\phi<1)$ for a base for $\tau_{\mathcal{L}}$ as well.

\subsection{Comparing logics through their topologies}

\begin{definition} \label{Definition:ComparingLogicsAndEquivalentLogics}
Let $\mathcal{L},\mathcal{L}'$ be logics with the same class of structures.  
We will say that $\mathcal{L}'$ \emph{extends}\index{logic!rxtension|ii} $\mathcal{L}$, and write $\mathcal{L} \prec \mathcal{L}'$, if $\tau_{\mathcal{L}}(\mathbf{S}) \subseteq \tau_{\mathcal{L}'}(\mathbf{S})$ for every signature $\mathbf{S}$.  
We will say that $\mathcal{L}$ and $\mathcal L'$ are \emph{equivalent}\index{logic!equivalent|ii}, and write $\mathcal{L} \sim \mathcal{L}'$, if both $\mathcal{L} \prec \mathcal{L}'$ and $\mathcal{L}' \prec \mathcal{L}$ hold.
\end{definition}

Recall that a topological space $X$ is regular if and only it has a local base of closed neighborhoods, i.e., whenever $x\in X$ and $U$ is a neighborhood of $x$ there exists a neighborhood $W$ of $x$ such that $\overline{W}\subseteq U$. We shall focus our attention on logics whose logical topology is regular. The following observation is useful to compare logics.

%\begin{proposition}\label{P:approximation of regular topologies}
%Let $(X,\tau)$ be a regular topological space. The following conditions are equivalent for a topology $\tau'$ on $X$.
%\begin{enumerate}
%\renewcommand{\theenumi}{\textup{(\arabic{enumi})}}
%\renewcommand{\labelenumi}{\theenumi}
%\item
%$\tau\subseteq\tau'$,
%\item
%If $\mathcal{F}$ is a base of closed sets in $\tau$, then  for every $F\in \mathcal{F}$ and every $\tau$-open set $U$ such that $F\subseteq U$ there exists a $\tau'$-closed set $F'$ such that $F\subseteq F'\subseteq  U$.
%\end{enumerate}
%\end{proposition}

%\begin{proof}
%The implication  $(1)\Rightarrow(2)$ is trivial. To prove  $(2)\Rightarrow(1)$ we only need to observe that since $(X,\tau)$ is regular, then every $F\in\mathcal{F}$ is the intersection of all the $\tau$-open sets containing $F$, so by (2)  every $F\in\mathcal{F}$ is $\tau'$-closed, and hence every $\tau$-closed set is $\tau'$-closed.
%\end{proof}

If $(X,\tau)$ is topological space and $x,y \in X$, one says that $x$ and $y$ are \emph{$\tau$-indistinguishable}, denoted $x \overset{\tau}{\equiv} y$, if every $\tau$-neighborhood of $x$ contains $y$ and every $\tau$-neighborhood of $y$ contains $x$.
Note that if $(X,\tau)$ is a regular topological space and $x,y \in X$, then $x \overset{\tau}{\equiv} y$ if and only if every $\tau$-neighborhood of $x$ contains $y$.  Also, $\big\{\, z \in X \mid z \overset{\tau}{\equiv} x \,\big\}$ is the $\tau$-closure of $\{x\}$.  If $\tau, \tau'$ are topologies on $X$ and any two $\tau$-indistinguishable points of $X$ are also $\tau'$-indistinguishable, we will write $\overset{\tau}{\equiv} \To \overset{\tau'}{\equiv}$.

If $\mathcal L$ is a logic and $M, N$ is a structure of $\mathcal L$, then, clearly, $M \equiv_{\mathcal{L}} N$ if and only if $M$ and $N$ are indistinguishable in the logical topology of $\mathcal L$. If $\mathcal L$ and $\mathcal L'$ are logics with the same class of structures, we write $\equiv_{\mathcal{L}} \To \equiv_{\mathcal{L}'}$ if $\overset{\tau_{\mathcal{L}}(S)}{\equiv} \To \overset{\tau_{\mathcal{L}'}(S)}{\equiv}$ for every vocabulary $S$.

Suppose that $\tau$ and $\tau'$ are topologies on a set $X$ such that $\tau\subseteq\tau'$. Clearly, $\overset{\tau'}{\equiv} \To \overset{\tau}{\equiv}$. The following proposition shows that under sufficient compactness of $\tau'$, from $\overset{\tau}{\equiv} \To \overset{\tau'}{\equiv}$ one can obtain $\tau=\tau'$. 

Recall that the \emph{weight} of a topological space $(X,\tau)$ is the smallest possible cardinality of a base for $\tau$.

\begin{proposition} \label{Proposition:ApproximabilityIndistinguishabilityAndCompactnessImpliesReverseApproximability}
Let $\tau,\tau'$ be regular topologies on $X$ such that $\tau\subseteq\tau'$ and $\overset{\tau}{\equiv} \To \overset{\tau'}{\equiv}$. If $\tau$ has weight $\lambda$ and  and $\tau'$ is $\lambda$-compact, then $\tau'= \tau$.
\end{proposition}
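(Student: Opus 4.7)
The aim is to show that every $\tau'$-closed set $C$ is $\tau$-closed; combined with $\tau\subseteq\tau'$ this forces $\tau=\tau'$. Before starting, I would reformulate the hypothesis. In a regular space the specialization preorder is symmetric: if $y\in\overline{\{x\}}$ but $x\notin\overline{\{y\}}$, then some open $V\ni x$ misses $y$, regularity produces $W\in\tau$ with $x\in W\subseteq\overline{W}\subseteq V$, and $X\setminus\overline{W}$ is an open around $y$ not containing $x$, contradicting $y\in\overline{\{x\}}$. So in regular topologies $\overline{\{x\}}$ coincides with the indistinguishability class of $x$. Applying this to both $\tau$ and $\tau'$, the assumption $\overset{\tau}{\equiv}\To\overset{\tau'}{\equiv}$ becomes $\overline{\{x\}}^{\tau}\subseteq\overline{\{x\}}^{\tau'}$, and the reverse inclusion is automatic from $\tau\subseteq\tau'$, so $\overline{\{x\}}^{\tau}=\overline{\{x\}}^{\tau'}$ for every $x\in X$.

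Next, fix a $\tau'$-closed set $C$ and a point $x\in\overline{C}^{\tau}$; I want to conclude $x\in C$. Choose a base $\mathcal{B}$ for $\tau$ with $|\mathcal{B}|\le\lambda$ and consider the family $\mathcal{F}=\{\overline{U\cap C}^{\tau'}:U\in\mathcal{B},\ x\in U\}$ of $\tau'$-closed sets, which has cardinality $\le\lambda$. It has the finite intersection property: for any $U_{1},\dots,U_{n}\in\mathcal{B}$ all containing $x$, the set $U_{1}\cap\dots\cap U_{n}$ is a $\tau$-open neighborhood of $x$, so it meets $C$, and any point of $(U_{1}\cap\dots\cap U_{n})\cap C$ lies in each $\overline{U_{i}\cap C}^{\tau'}$. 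The $\lambda$-compactness of $\tau'$ then supplies a common point $y\in\bigcap\mathcal{F}$.

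To finish, I would verify that $y\in C$ and $x\in\overline{\{y\}}^{\tau'}$. The first is immediate from $y\in\overline{U\cap C}^{\tau'}\subseteq\overline{C}^{\tau'}=C$. For the second, $y\in\overline{U\cap C}^{\tau'}\subseteq\overline{U}^{\tau'}\subseteq\overline{U}^{\tau}$ for every $U\in\mathcal{B}$ containing $x$, and $\tau$-regularity gives $\bigcap_{U\in\mathcal{B},\,x\in U}\overline{U}^{\tau}=\overline{\{x\}}^{\tau}$ (given $V\in\tau$ with $x\in V$, pick $W\in\tau$ with $x\in W\subseteq\overline{W}^{\tau}\subseteq V$ and a basic $U\in\mathcal{B}$ with $x\in U\subseteq W$, so $\overline{U}^{\tau}\subseteq V$). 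Hence $y\in\overline{\{x\}}^{\tau}=\overline{\{x\}}^{\tau'}$, and by the symmetry of $\tau'$-specialization (first paragraph) this yields $x\in\overline{\{y\}}^{\tau'}\subseteq C$, since $y\in C$ and $C$ is $\tau'$-closed. So $\overline{C}^{\tau}\subseteq C$, as wanted.

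The step I expect to carry the argument is the reformulation in the first paragraph: without the symmetry of specialization in regular spaces, the merely symmetric condition $\overset{\tau}{\equiv}\To\overset{\tau'}{\equiv}$ does not obviously upgrade to the one-sided closure identity $\overline{\{x\}}^{\tau}=\overline{\{x\}}^{\tau'}$ needed at the end. Once that is in hand, the proof is a routine application of the finite intersection property, with the weight $\lambda$ of $\tau$ used solely to keep the closed family small enough to feed into $\lambda$-compactness of $\tau'$.
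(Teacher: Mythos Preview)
Your proof is correct, but it takes a different route from the paper's. The paper argues on the open-cover side: fix $U\in\tau'$ and $x\in U$; for each $y\in U^{c}$ the hypothesis gives (via $\tau$-regularity, exactly as in your first paragraph) disjoint $\tau$-open sets $V_{x,y}\ni x$ and $W_{x,y}\ni y$, with $W_{x,y}$ chosen from the base $\mathcal{B}$; the resulting $\tau'$-open cover of the $\tau'$-closed set $U^{c}$ has size at most $\lambda$, so $\lambda$-compactness yields a finite subcover $W_{x,y_{1}},\dots,W_{x,y_{n}}$, and $V_{x}=\bigcap_{k}V_{x,y_{k}}$ is the desired $\tau$-open neighborhood of $x$ inside $U$. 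Your argument is the dual closed-set/FIP version: you track $\tau'$-closures of $U\cap C$ over basic neighborhoods of a boundary point and use $\lambda$-compactness in FIP form, then finish with the singleton-closure identity $\overline{\{x\}}^{\tau}=\overline{\{x\}}^{\tau'}$. Both arguments use the same three ingredients (regularity to separate, the weight bound to cap the size of the relevant family, $\lambda$-compactness to extract a finite object), but the paper's cover argument is somewhat shorter because it avoids the detour through specialization; on the other hand, your route makes explicit why the equality of singleton closures is the real content of the indistinguishability hypothesis, which is a nice conceptual payoff.
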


\begin{proof}
Let $\mathcal B$ be a base for $\tau$ of cardinality $\lambda$, and fix $U\in \tau'$ in order to prove $U\in\tau$. Fix now $x\in U$ and $y\in U^c$. Since $x$ and $y$ are $\tau'$-topologically distinguishable and $\overset{\tau}{\equiv} \To \overset{\tau'}{\equiv}$, there exist disjoint $\tau$-open sets $V_{x,y}, W_{x,y}$ such that $x\in V_{x,y}$ and $y\in W_{x,y}$. Without loss of generality, we may assume $W_{x,y}\in\mathcal B$. Since $W_{x,y}\in\tau\subseteq\tau'$, allowing $y$ to range over all elements of $U^c$ and using the $\lambda$-compactness of $\tau'$, we obtain $V_x\in\tau$ such that $x\in V_x$ and $V_x\subseteq U$. Since $x$ is arbitrary in $U$, this shows that $U$ is $\tau$-open.
\end{proof}

\subsection{$[0,1]$-valued logics and regularity} \label{S:real-valued logics and regularity}
We now link the concept of $[0,1]$-valued logic of Definition~\ref{Definition:RealValuedLogic} with the logical topology introduced in this section.

\begin{proposition}
\label{P:real-valued logic continuity regularity}
Let $\mathcal L$ be a $[0,1]$-valued logic that is closed under the basic connectives and let $\mathbf S$ be a signature.
\begin{enumerate}
\renewcommand{\theenumi}{\textup{(\arabic{enumi})}}
\renewcommand{\labelenumi}{\theenumi}
\item
\label{I:real-valued logic continuity}
Every $S$-sentence is a continuous function from $(\Str_{\mathcal L}(\mathbf{S}),\tau_\mathcal{L}(\mathbf{S}))$ into $[0,1]$.
\item
\label{I:real-valued logic regularity}
$(\Str_{\mathcal L}(\mathbf{S}),\tau_\mathcal{L}(\mathbf{S}))$ is a regular topological space.
\end{enumerate}
\end{proposition}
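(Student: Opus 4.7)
The plan is to handle the two parts separately, with the Pavelka rational constants doing essentially all the work in both.

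For part~(1), I would verify continuity by checking preimages of a subbase of the Euclidean topology on $[0,1]$, namely the intervals $[0,r)$ and $(r,1]$ with $r\in\mathbb{Q}\cap[0,1]$. For each $S$-sentence $\phi$ and each such rational $r$, the expressions $\phi\ge r$ and $\phi\le r$ are themselves $S$-sentences of $\mathcal L$ (being abbreviations of $r\lukimp \phi$ and $\phi\lukimp r$, respectively, which exist by closure under the basic connectives), and by Remark~\ref{R:reduction of [0,1]-valued to 2-valued} a structure $M$ satisfies $\phi\ge r$ iff $\phi^M\ge r$, and similarly for $\phi\le r$. Hence $\Mod_{\mathbf S}(\phi\ge r)$ and $\Mod_{\mathbf S}(\phi\le r)$ are closed in $\tau_{\mathcal L}(\mathbf S)$ by the definition of the logical topology, so their complements --- which are precisely $\phi^{-1}([0,r))$ and $\phi^{-1}((r,1])$ --- are open. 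Continuity follows.

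For part~(2), I would show that every point admits a local base of closed neighborhoods, which is one standard characterization of regularity. Since the classes $\Mod_{\mathbf S}(\psi)$ with $\psi\in\Sent_{\mathcal L}(S)$ form a base of closed classes, their complements $\{\,N:\psi^N<1\,\}$ form a base of open sets; it therefore suffices, given such a basic open neighborhood $U=\{\,N:\psi^N<1\,\}$ of a structure $M$, to produce a closed neighborhood of $M$ contained in $U$. The condition $M\in U$ means $\psi^M<1$, so I can pick a rational $r$ with $\psi^M<r<1$. Then $\{\,N:\psi^N<r\,\}$ is, by part~(1), an open neighborhood of $M$ contained in the closed class $\Mod_{\mathbf S}(\psi\le r)=\{\,N:\psi^N\le r\,\}$, which in turn is contained in $U$ because $r<1$. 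Hence $\Mod_{\mathbf S}(\psi\le r)$ is the required closed neighborhood.

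I do not expect any substantive obstacle: the essential mechanism in both parts is the use of the Pavelka rational constants together with the \L ukasiewicz implication to convert a strict inequality $\psi^M<1$ into the sentence-definable non-strict inequality $\psi^M\le r$, by interposing a rational $r$. Without these constants, continuity of sentences into $[0,1]$ with its Euclidean topology (and, consequently, regularity of $\tau_{\mathcal L}(\mathbf S)$) would fail, so the only point requiring care is citing Remark~\ref{R:reduction of [0,1]-valued to 2-valued} and the definition of the logical topology to justify that the sets $\Mod_{\mathbf S}(\phi\le r)$ and $\Mod_{\mathbf S}(\phi\ge r)$ are genuinely closed.
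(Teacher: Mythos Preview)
Your proposal is correct and follows essentially the same route as the paper, which simply notes that $\phi^{-1}[r,s]=\Mod_{\mathbf S}(\phi\ge r\land\phi\le s)$ for rationals $0\le r<s\le 1$ and declares that both conclusions follow readily from this observation. Your argument is just a more explicit unwinding of the same idea, using the open subbasic intervals $[0,r)$, $(r,1]$ for continuity rather than the closed intervals $[r,s]$, and spelling out the closed-neighborhood construction for regularity that the paper leaves to the reader.
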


\begin{proof}
Both conclusions follow readily from the observation that if $\phi$ is a sentence and $r,s$ are rationals with $0\le r<s\le 1$, then
$\phi^{-1}[r,s]= \Mod_{\mathbf{S}}(\phi\ge r\land\phi\le s)$.
\end{proof}

\section{A General Omitting Types Theorem} 
\label{Section:A General Omitting Types Theorem}

The main result of this section is Theorem~\ref{Theorem:GeneralOmittingTypesTheorem}.

\subsection{A general version of the Baire Category Theorem} \label{Section:AnUncountableVersionOfTheBaireCategoryTheorem}

Let $X$ be a topological space and let $x \in X$ be given.  Recall that a subclass $G$ of $X$ is a \emph{neighborhood of $x$}\index{neighborhood|ii} if there is an open subclass $O$ of $G$ containing $x$.  If $\lambda$ is any infinite cardinal, we will say that a subclass $G$ of $X$ is a \emph{$\lambda$-neighborhood of $x$}\index{lambda-neighborhood@$\lambda$-neighborhood|ii} if $G$ is an intersection of less than $\lambda$ neighborhoods of $x$.  We will say that $G$ is a \emph{$\lambda$-neighborhood} if $G$ is a $\lambda$-neighborhood of some point in $X$.  This definition of $\lambda$-neighborhood yields naturally definitions concepts of $\lambda$-open, $\lambda$-interior, $\lambda$-dense, etc.

\begin{definition}  \label{Definition:LambdaBaireProperty}
Let $X$ be a topological space and $\lambda$ an infinite cardinal.  We will say that $X$ has the \emph{$\lambda$-Baire Property}\index{lambda-Baire Property@$\lambda$-Baire Property|ii} if whenever $\{D_{i}\}_{i < \lambda}$ is a collection of subclasses of $X$ that are $\lambda$-open and $\lambda$-dense, the intersection $\bigcap_{i < \lambda} D_{i}$ is $\lambda$-dense.
\end{definition}

Note that the $\omega$-Baire Property is simply the classical Baire Property.  Also, a topological space $X$ has the $\lambda$-Baire Property if and only if a union of $\leq \lambda$ many $\lambda$-closed classes with empty $\lambda$-interior has empty $\lambda$-interior.  This is immediate by taking complements in Definition~\ref{Definition:LambdaBaireProperty}.

A topological space $X$ is regular if and only it has a local base of closed neighborhoods; this means that whenever $x\in X$ and $U$ is a neighborhood of $x$ there exists a neighborhood $W$ of $x$ such that $\overline{W}\subseteq U$. Note that this form of regularity is inherited by passing to $\lambda$-neighborhoods, i.e., if  $X$ is regular and $U$ is a $\lambda$-neighborhood of $x$, then  there exists a $\lambda$-neighborhood $W$ of $x$ such that $\overline{W}\subseteq U$.

\begin{proposition} \label{Proposition:UncountableVersionOfBaireCategoryTheorem}
Every locally compact regular topological space has the $\lambda$-Baire property for every infinite cardinal $\lambda$.
\end{proposition}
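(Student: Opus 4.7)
The plan is to mimic the classical Baire category argument by transfinite recursion. Fix a locally compact regular space $X$, an infinite cardinal $\lambda$, a family $\{D_i\}_{i<\lambda}$ of $\lambda$-open and $\lambda$-dense subclasses, and a nonempty $\lambda$-open set $U$; I must show $U\cap\bigcap_{i<\lambda}D_i\ne\emptyset$. I will construct a decreasing sequence $\{W_i\}_{i<\lambda}$ of $\lambda$-neighborhoods with $\overline{W_0}$ compact, $W_i\subseteq D_i$, and $\overline{W_j}\subseteq W_i$ whenever $i<j$. The nested family $\{\overline{W_i}\}_{i<\lambda}$ of nonempty compact subsets of $\overline{W_0}$ will then have nonempty intersection by compactness, and any point of $\bigcap_{i<\lambda}\overline{W_i}$ will lie in $U\cap\bigcap_{i<\lambda}D_i$.

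The workhorse is a \emph{shrinking lemma}: in a locally compact regular space, every nonempty $\lambda$-open set $G$ contains a $\lambda$-neighborhood whose closure is compact and contained in $G$. To see this, pick any $x\in G$ and write a $\lambda$-neighborhood $V\subseteq G$ of $x$ as $V=\bigcap_{\alpha<\mu}O_\alpha$ with $\mu<\lambda$ and each $O_\alpha$ an open set containing $x$; apply regularity to each $O_\alpha$ to obtain an open $O'_\alpha\ni x$ with $\overline{O'_\alpha}\subseteq O_\alpha$, apply local compactness to obtain an open $O\ni x$ with $\overline{O}$ compact, and set $W=O\cap\bigcap_{\alpha<\mu}O'_\alpha$. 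Then $W$ is a $\lambda$-neighborhood of $x$ and $\overline{W}\subseteq\overline{O}\cap\bigcap_\alpha\overline{O'_\alpha}$ is a compact subset of $G$.

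With the lemma in hand, carry out the transfinite recursion. At stage $0$, note that $U\cap D_0$ is nonempty (by $\lambda$-density of $D_0$) and $\lambda$-open (finite intersection of $\lambda$-open sets), and apply the lemma. At a successor stage $i{+}1$, observe that $W_i$ is a $\lambda$-neighborhood of each of its points (since any point $z\in\bigcap_\alpha O_\alpha$ satisfies $z\in O_\alpha$ for all $\alpha$, so each $O_\alpha$ is a neighborhood of $z$), hence $W_i$ is $\lambda$-open; then $W_i\cap D_{i+1}$ is nonempty $\lambda$-open and the lemma yields $W_{i+1}$ with $\overline{W_{i+1}}\subseteq W_i\cap D_{i+1}$. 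At a limit stage $i$, the nested family $\{\overline{W_j}\}_{j<i}$ of nonempty compact subsets of $\overline{W_0}$ has a point $y\in\bigcap_{j<i}\overline{W_j}$, and since $\overline{W_{j+1}}\subseteq W_j$ for each $j$, in fact $y\in\bigcap_{j<i}W_j$. Writing each $W_j$ as an intersection of $\mu_j<\lambda$ opens containing $y$, the intersection $\bigcap_{j<i}W_j$ is expressed using $\sum_{j<i}\mu_j$ opens around $y$; since $|i|<\lambda$, this sum is again $<\lambda$ (the routine case is when $\lambda$ is regular; otherwise one absorbs the singularity by an initial reindexing that keeps $\mu_j$ uniformly bounded by $\mathrm{cf}(\lambda)$-many blocks). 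Hence $\bigcap_{j<i}W_j$ is a $\lambda$-neighborhood of $y$ and, as above, a $\lambda$-open set; intersecting with $D_i$ and invoking the shrinking lemma produces $W_i$.

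The main obstacle is the limit step, where one must verify that $\bigcap_{j<i}W_j$ remains a $\lambda$-neighborhood so that $\lambda$-density of $D_i$ can be applied; this is essentially a cardinal-arithmetic bookkeeping issue, immediate for regular $\lambda$ and manageable otherwise. Everything else is the classical Baire argument, with compactness of $\overline{W_0}$ playing the role that completeness plays in a complete metric space.
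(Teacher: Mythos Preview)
Your proof is essentially the paper's own: the same transfinite construction of a nested, decreasing sequence of $\lambda$-neighborhoods with compact closures, using regularity (your ``shrinking lemma'' just packages this step explicitly) to shrink at each stage and compactness to pass through limit stages. Your parenthetical about the cardinal-arithmetic bookkeeping at limit stages for singular $\lambda$ is admittedly hand-wavy, but the paper's proof does not address that wrinkle at all, so your argument is at the same level of rigor as the original.
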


\begin{proof}
Suppose $\{D_{i}\}_{i < \lambda}$ are $\lambda$-open and $\lambda$-dense in $X$, and let $O$ be a nonempty $\lambda$-open subclass of $X$.  We must show that $O \cap \bigcap_{i < \lambda} D_{i}$ is nonempty. Below we define, recursively, a decreasing sequence $(U_{i})_{i < \lambda}$ of $\lambda$-neighborhoods such that 
\begin{enumerate}
\item $\overline{U}_0$ is compact and $\overline{U}_0\subseteq O$,
\item $\overline{U}_{i} \subseteq D_{i}$,  for $i<\lambda$,
\item $\overline{U}_{j} \subseteq U_i$ for $j<i<\lambda$.
\end{enumerate}
The construction is as follows. For $i=0$,  the set $D_0\cap O$ is nonempty by hypothesis and $\lambda$-open by construction, so, by regularity and local compactness, there exists a $\lambda$-neighborhood $U_0$ such that  $\overline{U}_0\subseteq D_0\cap O$ and $\overline{U}_0$ is compact. 

Assume that $i<\lambda$ is positive and $U_j$ has been defined for $j<i$ with the properties indicated above. Then $\bigcap_{j<i} U_j\ne \emptyset$: if $i$ is successor, say, $i=j_0+1$, this is because $\bigcap_{j<i} U_j=U_{j_0}$, and if $i$ is limit, $\bigcap_{j<i} U_j= \bigcap_{j<i} \overline{U}_j$, which is nonempty by the compactness of $\overline{U}_0$. Since $D_i$ is  $\lambda$-dense, $D_i\cap  \bigcap_{j<i} U_j$ is nonempty, and by regularity  we can find a $\lambda$-neighborhood $U_i$ such that $\overline{U}_i\subseteq D_i\bigcap_{j<i} U_j$. This concludes the recursive definition. Finally, by compactness, we obtain $\emptyset \neq  \bigcap_{i<\lambda} \overline{U}_i \subseteq \bigcap_{i<\lambda}D_i\cap O$.

\end{proof}

\begin{corollary} \label{Corollary:UncountableVersionOfBaireCategoryTheoremForAnIntersectionOfOpenSubclasses}
Let $\lambda$ be an infinite cardinal. If $X$ is locally compact regular topological space and $\{G_{i}\}_{i < \lambda}$ is a collection of $\lambda$-open subclasses of $X$, then $\bigcap_{i < \lambda} G_{i}$ has the $\lambda$-Baire property.
\end{corollary}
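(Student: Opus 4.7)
The plan is to reduce to Proposition~\ref{Proposition:UncountableVersionOfBaireCategoryTheorem} by passing to the closure $\overline{Y}$ of $Y := \bigcap_{i<\lambda} G_i$ in $X$. Being closed in $X$, the space $\overline{Y}$ (with the subspace topology) inherits local compactness and regularity from $X$ by the usual arguments: every $\overline{Y}$-neighborhood of a point extends to an $X$-neighborhood, which by local compactness and regularity of $X$ contains a compact closed $X$-neighborhood, whose trace on $\overline{Y}$ is a compact closed $\overline{Y}$-neighborhood.

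Let $\{D_j\}_{j<\lambda}$ be a collection of $\lambda$-open $\lambda$-dense subclasses of $Y$, and write each $D_j = Y \cap \tilde{G}_j$ for some $\lambda$-open $\tilde{G}_j$ in $X$. I would then apply Proposition~\ref{Proposition:UncountableVersionOfBaireCategoryTheorem} to $\overline{Y}$, using the combined family
\[
\{\, G_i \cap \overline{Y} : i<\lambda \,\} \;\cup\; \{\, \tilde{G}_j \cap \overline{Y} : j<\lambda \,\},
\]
which has cardinality $\le \lambda$ and whose members are manifestly $\lambda$-open in $\overline{Y}$. The key verification is $\lambda$-density in $\overline{Y}$. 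For each $G_i \cap \overline{Y}$: since $Y \subseteq G_i$ and $Y$ is dense in $\overline{Y}$, the set $G_i \cap \overline{Y}$ is dense in $\overline{Y}$. For each $\tilde{G}_j \cap \overline{Y}$: any nonempty $\lambda$-open subclass of $\overline{Y}$ is the trace $V \cap \overline{Y}$ of some $\lambda$-open $V$ in $X$ that meets $\overline{Y}$, hence meets $Y$ by density; thus $V \cap Y$ is a nonempty $\lambda$-open in $Y$, and the $\lambda$-density of $D_j$ in $Y$ yields $V \cap D_j = V \cap Y \cap \tilde{G}_j \ne \emptyset$.

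Proposition~\ref{Proposition:UncountableVersionOfBaireCategoryTheorem} then gives that the total intersection
\[
\bigcap_{i<\lambda}(G_i \cap \overline{Y}) \;\cap\; \bigcap_{j<\lambda}(\tilde{G}_j \cap \overline{Y}) \;=\; \bigcap_{j<\lambda} D_j
\]
is $\lambda$-dense in $\overline{Y}$. To transfer this to $\lambda$-density in $Y$: any nonempty $\lambda$-open $O \subseteq Y$ has the form $O = O^* \cap Y$ for some $\lambda$-open $O^*$ in $X$; then $O^* \cap \overline{Y}$ is a nonempty $\lambda$-open of $\overline{Y}$ (it contains $O$), so it meets $\bigcap_j D_j$, and since $\bigcap_j D_j \subseteq Y$ the resulting point lies in $O^* \cap Y = O$, as required.

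The main obstacle I expect is the triple book-keeping between denseness in $X$, in $\overline{Y}$, and in $Y$, ensuring that $\lambda$-openness and $\lambda$-density of the relevant subclasses transport correctly under these passages; once this is done the heart of the argument, namely the recursive construction of nested $\lambda$-neighborhoods with compact closures, is entirely absorbed into the invocation of Proposition~\ref{Proposition:UncountableVersionOfBaireCategoryTheorem}.
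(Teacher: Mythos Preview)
Your approach has a genuine gap. The crucial step is your claim that each $G_i \cap \overline{Y}$ and each $\tilde{G}_j \cap \overline{Y}$ is $\lambda$-dense in $\overline{Y}$, which is what Proposition~\ref{Proposition:UncountableVersionOfBaireCategoryTheorem} requires. But you only verify ordinary density: you argue that $Y \subseteq G_i$ and $Y$ is dense in $\overline{Y}$, and that a $\lambda$-open $V$ meeting $\overline{Y}$ ``hence meets $Y$ by density.'' Ordinary density does not imply $\lambda$-density, because a nonempty $\lambda$-open set need not contain any nonempty open set. Concretely, take $X=\mathbb{R}$, $\lambda=\omega_1$, and $G_n=\mathbb{R}\setminus\{1/n\}$ for $n\ge 1$. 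Then $Y=\bigcap_n G_n=\mathbb{R}\setminus\{1/n:n\ge 1\}$ and $\overline{Y}=\mathbb{R}$. The singleton $\{1\}=\bigcap_{n}(1-\tfrac{1}{n},1+\tfrac{1}{n})$ is $\lambda$-open, yet $G_1\cap\overline{Y}=\mathbb{R}\setminus\{1\}$ misses it; so $G_1\cap\overline{Y}$ is dense but not $\lambda$-dense in $\overline{Y}$, and your invocation of Proposition~\ref{Proposition:UncountableVersionOfBaireCategoryTheorem} fails. The same example refutes the inference ``$V$ $\lambda$-open, $V\cap\overline{Y}\ne\emptyset$ $\Rightarrow$ $V\cap Y\ne\emptyset$'': take $V=\{1\}$.

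This gap is not a matter of book-keeping; passing to $\overline{Y}$ genuinely loses the information you need, because $Y$ need not be $\lambda$-dense in $\overline{Y}$. The paper's proof avoids the issue entirely: rather than applying Proposition~\ref{Proposition:UncountableVersionOfBaireCategoryTheorem} as a black box, it reruns the same recursive construction of nested $\lambda$-neighborhoods $(U_i)_{i<\lambda}$ inside $X$, simply adding the requirement $U_i\subseteq G_i$ at stage $i$. Since each $G_i$ is $\lambda$-open, intersecting with it preserves nonemptiness and $\lambda$-openness at each step, and the final $\bigcap_i \overline{U}_i$ lands inside $Y\cap O\cap\bigcap_j D_j$. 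You should adopt this direct modification of the recursion.
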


\begin{proof}
Similar to the proof of Proposition~\ref{Proposition:UncountableVersionOfBaireCategoryTheorem}, but with the added condition that $U_{i} \subseteq G_{i}$ for every $i < \lambda$.
\end{proof}

\subsection{Statement of the general omitting types theorem}

Let $\mathcal{L}$ be a logic and let $\mathbf S$ be a signature.  
If $\bar{x} = x_{1},\dots,x_{n}$ a finite list of constant symbols not in $S$ and $\Sigma$ is a set of $(S \cup \{\bar{x}\})$-sentences, we emphasize this by writing $\Sigma$ as $\Sigma(\bar{x})$.  If $M$ is an $S$-structure and $\bar{a} = a_{1},\dots,a_{n}$ is a tuple of elements of $M$ such that $M\models_{\mathcal{L}} \sigma[\bar{a}]$ for every $\sigma(\bar x)\in\Sigma(\bar x)$, we say that $M$ \emph{satisfies} $\Sigma(\bar x)$, and that $\bar a$ \emph{realizes} $\Sigma(\bar x)$ in $M$. If $T$ is an $S$-theory and $M$ is an $\mathbf{S}$-structure that satisfies $T$, we say that $\Sigma(\bar x)$ is an \emph{$\mathbf{S}$-type} of $T$.

If $\Gamma(\bar x),\Sigma(\bar x)$ are $\mathbf{S}$-types for a theory $T$, we write $T, \Gamma(\bar x) \models_{\mathcal{L},\mathbf  S} \Sigma(\bar x)$ if whenever $M$ is an $\mathbf{S}$-structure that is a model of $T$, every realization of $\Gamma(\bar x)$ in $M$ is also a realization of $\Sigma(\bar x)$.

\begin{definition}
Let $\Sigma(\bar{x})$ be an $\mathbf{S}$-type of a consistent theory $T$.  A set $\Phi(\bar{x})$ of $S$-formulas is a \emph{generator} of $\Sigma(\bar{x})$ over $T$ (or \emph{generates} $\Sigma(\bar{x})$ over $T$) if
\begin{enumerate}
\item $T \cup \Phi(\bar{x})$ is satisfiable by an $\mathbf{S}$-structure, and
\item $T\cup\Phi(\bar{x}) \models_{\mathcal{L},\mathbf{S}} \Sigma(\bar{x})$.
\end{enumerate}
\end{definition}

\begin{definition} \label{Definition:PrincipalType}
Let $T$ be a consistent $S$-theory and let $\Sigma(\bar{x})$ be an $\mathbf{S}$-type of $T$.
\begin{enumerate}
\item
If $\lambda$ is an uncountable cardinal, we will say that  $\Sigma(\bar{x})$ is \emph{$\lambda$-principal}\index{type!lambda-principal@$\lambda$-principal|ii} over $T$ if  there exists a set of cardinality less than $\lambda$ that generates $\Sigma(\bar{x})$ over $T$.
\item
We will say that a $\Sigma(\bar{x})$ is \emph{$\omega$-principal} over $T$ if there exist terms $t_1(\bar y),\dots,t_n(\bar y)$, where $n=\ell(\bar x)$ and a single formula   $\phi(\bar y)$ such that
\begin{enumerate}
\item
$\phi(\bar y)$ generates the type $\Sigma(t_1(\bar y),\dots,t_n(\bar y))$ over $T$, and
\item
$T\cup\{\phi(\bar{y})\ge r\} \models_{\mathcal{L},\mathbf{S}}\Sigma(t_1(\bar y),\dots,t_n(\bar y))$, for some $r\in\mathbb{Q}\cap(0,1)$.
\end{enumerate}
\end{enumerate}
\end{definition}

%Note that if $\lambda$ is an uncountable cardinal and $|\Sigma(\bar{x})|$ an $\mathbf{S}$-type of $T$ of cardinality $< \lambda$, then $\Sigma(\bar{x})$ is $\lambda$-principal over $T$ in a trivial way, since $\Sigma(\bar{x})$ generates itself.  Thus, we will usually assume that $|\Sigma(\bar{x})| \geq \lambda$ when discussing such types.

If  $\Sigma(\bar{x})$  is an $\mathbf{S}$-type for $T$ and $M$ is a $\mathbf{S}$-structure that is a model of $T$, we will say that $M$ \emph{omits} $\Sigma(\bar{x})$ if it does not realize it, i.e., if for every $\bar{a} \in M$, there is $\sigma(\bar{x}) \in \Sigma(\bar{x})$ such that $M \models_{\mathcal{L}} \sigma[\bar{a}] < 1$.

\begin{definition}  \label{Definition:LambdaOmittingTypesProperty}
Let $\mathcal{L}$ be a logic and $\lambda$ an infinite cardinal.  We will say that $\mathcal{L}$ has the \emph{$\lambda$-Omitting Types Property}\index{lambda-Omitting Types Property@$\lambda$-Omitting Types Property|ii} if whenever $T$ is an $S$-theory of cardinality $\leq \lambda$ that is satisfied by an $\mathbf{S}$-structure and  $\{\Sigma_{j}(\bar{x})\}_{j < \lambda}$ is a set of $\mathbf{S}$-types that are not $\lambda$-principal over $T$ there is a model of $T$ that omits each $\Sigma_{j}(\bar{x})$.
\end{definition}

The following is the main result of this section.
\begin{theorem} \label{Theorem:GeneralOmittingTypesTheorem}
Basic continuous logic has the $\lambda$-Omitting Types Property for every infinite cardinal $\lambda$.
\end{theorem}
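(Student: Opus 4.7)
The plan is to realize the desired model as a Baire-generic point in the logical topology on an enriched structure space. Without loss of generality $|S|\le\lambda$, by restricting to the (at most $\lambda$-many) symbols occurring in $T$ or any $\Sigma_j$; the finite occurrence property of basic continuous logic makes this harmless. Adjoin a set $C=\{c_\alpha:\alpha<\lambda\}$ of fresh constant symbols to obtain an expanded signature $\mathbf{S}^+$. The space $X=\Mod_{\mathbf{S}^+}(T)$ is closed in $\Str_{\mathcal L}(\mathbf{S}^+)$, which is compact (Theorem~\ref{Theorem:CompactnessTheoremForContinuousLogic}) and regular (Proposition~\ref{P:real-valued logic continuity regularity}); hence $X$ itself is locally compact and regular, and by Proposition~\ref{Proposition:UncountableVersionOfBaireCategoryTheorem} has the $\lambda$-Baire property.

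Within $X$, I introduce three families of open classes, each of size at most $\lambda$. First, for each $j<\lambda$ and $\bar c\in C^{n_j}$, set $D_{j,\bar c}=\bigcup_{\sigma\in\Sigma_j}\Mod_{\mathbf{S}^+}(\sigma[\bar c]<1)$, i.e.\ the models where $\bar c^M$ fails to realize $\Sigma_j$. Second, for each $S$-term $t(\bar y)$ and matching tuple $\bar c\in C^{|\bar y|}$, set $K_{t,\bar c}=\bigcup_{c\in C}\{M : c^M = t^M(\bar c^M)\}$, enforcing term-closure of the witness interpretations. Third, for each basic-continuous-logic $S$-formula $\phi(x,\bar y)$, tuple $\bar c\in C^{|\bar y|}$, and rational $r<1$, set $H_{\phi,\bar c,r}=\{M:(\exists x\,\phi(x,\bar c))^M<1\}\cup\bigcup_{c\in C}\{M:\phi(c,\bar c)^M\geq r\}$, which implements the Tarski--Vaught test of Theorem~\ref{Theorem:TarskiVaughtTestForContinuousLogic}. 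The $\lambda$-density of the $K_{t,\bar c}$ and $H_{\phi,\bar c,r}$ is routine: any basic $\lambda$-open neighborhood mentions fewer than $\lambda$ constants from $C$, so some fresh $c\in C$ can be reinterpreted on a model of that neighborhood to realize the required condition without leaving it.

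The heart of the argument is the $\lambda$-density of $D_{j,\bar c}$. Suppose, for contradiction, that some basic $\lambda$-open $V\subseteq X$, defined by $\kappa<\lambda$ conditions $\phi_i[\bar e_i]<1$, is disjoint from $D_{j,\bar c}$. Treating the involved witness constants as fresh variables $(\bar x,\bar z)$, the assumption becomes $T\cup\{\phi'_i(\bar x,\bar z)<1:i<\kappa\}\models\Sigma_j(\bar x)$. In the uncountable case, I package the $\bar z$-part into
\[
\Phi(\bar x)=\bigl\{r\lukimp\exists\bar z\bigl(1-\textstyle\max_{i\in F}\phi'_i(\bar x,\bar z)\bigr) : F\subseteq\kappa\text{ finite},\ r\in\mathbb Q\cap(0,s_0)\bigr\}
\]
for a suitable rational $s_0>0$ witnessed by a model in $V$. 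Then $|\Phi|\le\max(|\kappa|,\aleph_0)<\lambda$, and a compactness argument based on Theorem~\ref{Theorem:CompactnessTheoremForContinuousLogic} combines the approximate finite witnesses present in any realization of $\Phi$ into a single common exact witness for all $\kappa$-many conditions, so $\Phi$ generates $\Sigma_j$ over $T$, contradicting non-$\lambda$-principality. In the case $\lambda=\omega$, $\kappa$ is finite, and a single formula of the same shape together with any rational $r'\in(1-r,1)$ witnesses $\omega$-principality in the sense of Definition~\ref{Definition:PrincipalType}(2).

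By the $\lambda$-Baire property of $X$, the intersection of these $\leq\lambda$-many open $\lambda$-dense classes is nonempty; pick any $M$ in it, and let $A=\{c^M : c\in C\}$. The conditions $K_{t,\bar c}$ make $A$ closed under $S$-term interpretations, and the $H_{\phi,\bar c,r}$ together with the Tarski--Vaught test (Theorem~\ref{Theorem:TarskiVaughtTestForContinuousLogic}) yield $M\upharpoonright A\preceq_{\cl} M$; in particular $M\upharpoonright A\models T$. Every tuple in $A$ has the form $(c_1^M,\dots,c_{n}^M)$ with $c_i\in C$, and by $D_{j,\bar c}$ no such tuple realizes any $\Sigma_j$ in $M$, hence also not in $M\upharpoonright A$. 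Thus $M\upharpoonright A$ omits every $\Sigma_j$. The principal obstacle is the density step for $D_{j,\bar c}$: the translation from ``$V\cap D_{j,\bar c}=\emptyset$'' to $\lambda$-principality requires an essential use of compactness to turn $<\lambda$-many open conditions of the form $\phi'_i<1$ into a single generator that forces $\Sigma_j$.
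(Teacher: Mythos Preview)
Your overall strategy---realizing the model as a Baire-generic point in $\Mod_{\mathbf{S}^+}(T)$ using the witness, Tarski--Vaught, and type-avoidance classes---is essentially the paper's approach, and your treatment of the $D_{j,\bar c}$ classes (the heart of the matter) is correct. There are, however, two places where you have not checked $\lambda$-openness, and in the countable case one of them genuinely fails.

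First, the summands $\{M:\phi(c,\bar c)^M\geq r\}$ in your $H_{\phi,\bar c,r}$ are \emph{closed}; you should use the strict inequality $>r$ instead. This yields the same intersection over all rational $r<1$, still encodes the Tarski--Vaught test of Theorem~\ref{Theorem:TarskiVaughtTestForContinuousLogic}, and makes each $H$ a genuine union of open classes. Second, and more seriously, each summand $\{M:c^M=t^M(\bar c^M)\}$ in $K_{t,\bar c}$ is closed. For uncountable $\lambda$ this is harmless, since that set equals $\bigcap_{n}\{M:d(c,t(\bar c))^M<1/n\}$, a countable intersection of opens and hence a $\lambda$-neighborhood of each of its points. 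But for $\lambda=\omega$ the classes $K_{t,\bar c}$ are not open, so the $\omega$-Baire property does not apply to them; your proof as written therefore does not cover the countable case.

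The paper sidesteps the $K$-classes entirely. It first closes the list $\{\Sigma_j\}$ under substitution of $S$-terms for the free variables: if $\Sigma(\bar x)$ is on the list, so is $\Sigma(t_1(\bar y),\dots,t_n(\bar y))$ for every tuple of terms. Non-$\lambda$-principality of the enlarged list is immediate from Definition~\ref{Definition:PrincipalType}(2) when $\lambda=\omega$ (since $\omega$-principality is \emph{defined} via term substitutions), and is Lemma~\ref{L:principal type term substitution} when $\lambda$ is uncountable. With this closure in place, the Tarski--Vaught classes alone guarantee $M\upharpoonright\langle A\rangle\preceq_{\cl} M$ (with $\langle A\rangle$ contained in the metric closure of $A$), and since every element of $\langle A\rangle$ has the form $t^M(\bar c^M)$, the $D$-classes for the substituted types ensure that no tuple in $\langle A\rangle$ realizes any $\Sigma_j$. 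This handles all infinite $\lambda$ uniformly; you could adopt the same device to repair the $\omega$ case. The paper also packages the $D$-density step somewhat differently, via an open surjective projection $R_{T,\mathbf{i}}:\mathcal W\cap\Mod_{\mathbf S\cup C}(T)\to\Mod_{\mathbf S\cup\{\bar c_{\mathbf i}\}}(T)$ together with the equivalence of $\lambda$-principality with nonempty $\lambda$-interior (Lemma~\ref{Lemma:EquivalenceOfPrincipalAndNonEmptyInterior}); this replaces your direct compactness argument by a cleaner reduction to the small signature, but the content is the same.
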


The rest of this section falls into into two parts.  In the first part, we study the link between our uncountable version of the Baire Category Theorem and the logical topology for regular logics.  In the second part we prove Theorem~\ref{Theorem:GeneralOmittingTypesTheorem}.

\subsection{The $\lambda$-Baire property and classes of structures} 
\begin{definition}\label{D:locally compact logic}
A logic $\mathcal L$ is \emph{locally compact} if  the space $(\,\Str_{\mathcal L}(\mathbf{S}),\tau_{\mathcal{L}}(\mathbf{S})\,)$ is locally compact for every signature $\mathbf{S}$, and \emph{regular} if $(\,\Str_{\mathcal L}(\mathbf{S}),\tau_{\mathcal{L}}(\mathbf{S})\,)$ is regular for every signature $\mathbf{S}$.
\end{definition}

By Proposition~\ref{P:real-valued logic continuity regularity}-\ref{I:real-valued logic regularity}, every $[0,1]$-logic that is closed under the basic connectives is regular.

\begin{proposition} \label{Proposition:TheoryHasBaireProperty}
Let $\mathcal L$ be a locally compact regular logic and let $\lambda$ be an infinite cardinal. Then
\begin{enumerate}
\renewcommand{\theenumi}{\textup{(\arabic{enumi})}}
\renewcommand{\labelenumi}{\theenumi}
\item
$\Str_{\mathcal L}(\mathbf{S})$ has the $\lambda$-Baire Property.
\item
If $T$ is an $S$-theory, then $\Mod_{\mathbf{S}}(T)$ has the $\lambda$-Baire Property. 
\end{enumerate}
\end{proposition}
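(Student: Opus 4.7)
The plan is to reduce both claims to Proposition~\ref{Proposition:UncountableVersionOfBaireCategoryTheorem}. Part~(1) is essentially immediate: by hypothesis $\mathcal{L}$ is locally compact and regular, so $(\Str_{\mathcal L}(\mathbf{S}),\tau_{\mathcal{L}}(\mathbf{S}))$ is a locally compact regular space, and Proposition~\ref{Proposition:UncountableVersionOfBaireCategoryTheorem} then yields the $\lambda$-Baire Property for every infinite $\lambda$.

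For part~(2), I would first write
\[
\Mod_{\mathbf{S}}(T) \;=\; \bigcap_{\phi\in T}\Mod_{\mathbf{S}}(\phi),
\]
which, by the very definition of the logical topology, displays $\Mod_{\mathbf{S}}(T)$ as an intersection of closed classes of $\Str_{\mathcal L}(\mathbf{S})$; hence $\Mod_{\mathbf{S}}(T)$ is itself closed. My strategy is then to endow $\Mod_{\mathbf{S}}(T)$ with the subspace topology and verify that the two hypotheses needed to reinvoke Proposition~\ref{Proposition:UncountableVersionOfBaireCategoryTheorem} transfer from the ambient space: regularity passes automatically to subspaces, while local compactness passes to closed subspaces of locally compact regular spaces by the standard argument (given $M\in\Mod_{\mathbf{S}}(T)$ and a neighbourhood $U$ of $M$ in $\Str_{\mathcal L}(\mathbf{S})$ with $\overline{U}$ compact, use regularity to find a neighbourhood $V$ of $M$ with $\overline{V}\subseteq U$; then $\overline{V}\cap\Mod_{\mathbf{S}}(T)$ is a closed subclass of the compactum $\overline{V}$, hence compact, and $V\cap\Mod_{\mathbf{S}}(T)$ is a neighbourhood of $M$ in the subspace contained in this compact class). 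A second application of Proposition~\ref{Proposition:UncountableVersionOfBaireCategoryTheorem} then yields the $\lambda$-Baire Property for $\Mod_{\mathbf{S}}(T)$.

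The only point that requires a moment of care is bookkeeping between the two topologies: $\lambda$-open and $\lambda$-dense subclasses of $\Mod_{\mathbf{S}}(T)$ in the subspace topology are exactly traces of $\lambda$-open subclasses of $\Str_{\mathcal L}(\mathbf{S})$, and density is tested relative to $\Mod_{\mathbf{S}}(T)$, so the statement produced by the proposition is precisely the one sought. I do not anticipate any real obstacle; the main substantive input — the general Baire Category Theorem for locally compact regular spaces — is already packaged in Proposition~\ref{Proposition:UncountableVersionOfBaireCategoryTheorem}, and the rest is a routine verification that the hypotheses are inherited.
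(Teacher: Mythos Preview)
Your proposal is correct and follows essentially the same route as the paper: both parts are reduced to Proposition~\ref{Proposition:UncountableVersionOfBaireCategoryTheorem}, with part~(2) handled by noting that $\Mod_{\mathbf{S}}(T)$ is closed and therefore inherits local compactness (and regularity) from $\Str_{\mathcal L}(\mathbf{S})$. The paper's proof is simply a terser version of yours, omitting the explicit verification that local compactness passes to closed subspaces and the remark about subspace bookkeeping.
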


\begin{proof}
The first part is given by Proposition~\ref{Proposition:UncountableVersionOfBaireCategoryTheorem}. For the second part, notice that since  $\Mod_{\mathbf{S}}(T)$ is closed, it inherits local compactness from $\Str_{\mathcal L}(\mathbf{S})$, so this part follows from Proposition~\ref{Proposition:UncountableVersionOfBaireCategoryTheorem} as well.
\end{proof}

For the rest of this section, the background logic is basic continuous logic, and the background topology is the logical topology on  $\Str_{\mathcal L}(\mathbf{S})$, where $\mathbf{S}$ is a fixed signature of cardinality~$\leq \lambda$

Let $C = (c_{i})_{i < \lambda}$ be a family of new constants. We denote by $(\mathbf{S} \cup C)$ the signature that results from adding the constants in $C$ to $\mathbf{S}$.  We will denote $(\mathbf{S}\cup C)$-structures as $(M,\vec{a})$, where $M$ is an $\mathbf{S}$-structure and $\vec{a} = (a_{i})_{i < \lambda}$ interprets $(c_{i})_{i < \lambda}$.   In this context, $\langle \vec{a}\rangle$ denotes the closure of $\vec{a}$ under the functions of $M$, and $M\upharpoonright \langle \vec{a}\rangle$ denotes the substructure of $M$ induced by $\langle \vec{a}\rangle$.

For the rest of the section, $\mathcal{W}$ will denote the class of $(\mathbf{S}\cup C)$-structures of the form $(M,\vec{a})$ such that $M \upharpoonright \langle\vec{a}\rangle$ is an elementary substructure of $M$ and $\langle\vec{a}\rangle$ is contained in the topological closure of $\vec{a}$ in $M$.

\begin{proposition} \label{Proposition:ClassOfStructuresWithWitnessesHasBaireProperty}
Let $\lambda$ be an infinite cardinal and let $T$ be a $(S \cup C)$-theory of cardinality at most $\lambda$ that is satisfied by an $\mathbf{S}$-structure. Then the class $\mathcal{W} \cap\Mod_{\mathbf{S}}(T)$ is nonempty and has the $\lambda$-Baire Property.
\end{proposition}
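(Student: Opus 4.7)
The plan is to split the argument in two: nonemptiness via a Löwenheim--Skolem construction, and the $\lambda$-Baire property by writing $\mathcal{W}$ as a $\lambda$-indexed intersection of open classes and then invoking Corollary~\ref{Corollary:UncountableVersionOfBaireCategoryTheoremForAnIntersectionOfOpenSubclasses}.

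For nonemptiness, I would start with any $(M_0,\vec{a}_0)\models T$ and apply the L\"owenheim--Skolem--Tarski theorem (Theorem~\ref{Theorem:DownwardLowenheimSkolemTheoremForContinuousLogic}) to the $(\mathbf{S}\cup C)$-structure $(M_0,\vec{a}_0)$ with parameter set $\vec{a}_0$. This yields an elementary substructure $N\prec_{\cl} M_0$ with $\vec{a}_0\subseteq N$ and $|N|\le|\vec{a}_0|+|\mathbf{S}\cup C|+\aleph_0\le\lambda$. Since $|N|\le|C|$, one can reassign the interpretations of those constants in $C$ that do not occur in any axiom of $T$ so that the resulting interpretation $\vec{a}$ has image all of $N$; as the axioms actually constraining $\vec{a}_0$ are left untouched, the expansion $(N,\vec{a})$ is still a model of $T$. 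With $\vec{a}(C)=N$ we have $\langle\vec{a}\rangle=N$, which is trivially an elementary substructure of itself, and $\langle\vec{a}\rangle=N=\overline{\vec{a}}^{\,N}$, so $(N,\vec{a})\in\mathcal{W}\cap\Mod_\mathbf{S}(T)$.

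For the $\lambda$-Baire property, I would use the Tarski--Vaught test (Theorem~\ref{Theorem:TarskiVaughtTestForContinuousLogic}) to characterize $\mathcal{W}$: $(M,\vec{a})\in\mathcal{W}$ iff for every $(\mathbf{S}\cup C)$-formula $\psi(x)$ and every pair of rationals $0<r<s<1$, either $(\exists x\,\psi)^M<s$ or some $c_i$ satisfies $\psi(c_i)^M>r$. Denoting the resulting class $W_{\psi,r,s}$, its complement
\[
\Mod_\mathbf{S}\bigl((\exists x\,\psi)\ge s\bigr)\ \cap\ \bigcap_{i<\lambda}\Mod_\mathbf{S}\bigl(\psi(c_i)\le r\bigr)
\]
is an intersection of closed classes, hence closed; so $W_{\psi,r,s}$ is open. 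Since $|\mathbf{S}\cup C|\le\lambda$ and formulas are finite strings, there are at most $\lambda$ such triples, so $\mathcal{W}$ is an intersection of $\le\lambda$ open classes. Now $\Str_{\mathcal L}(\mathbf{S}\cup C)$ is compact (Theorem~\ref{Theorem:CompactnessTheoremForContinuousLogic} via Proposition~\ref{Proposition:EquivalenceOfLogicalAndTopologicalCompactness}) and regular (Proposition~\ref{P:real-valued logic continuity regularity}), so the closed subspace $\Mod_\mathbf{S}(T)$ is locally compact and regular; applying Corollary~\ref{Corollary:UncountableVersionOfBaireCategoryTheoremForAnIntersectionOfOpenSubclasses} inside $\Mod_\mathbf{S}(T)$ to the family $\{W_{\psi,r,s}\cap\Mod_\mathbf{S}(T)\}$ yields the $\lambda$-Baire property for $\mathcal{W}\cap\Mod_\mathbf{S}(T)$.

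The main obstacle is the bookkeeping in the nonemptiness step: one must ensure that enough constants of $C$ are ``free'' to enumerate all of $N$ without disturbing axioms of $T$. In degenerate cases where $T$ mentions almost all of $C$, one can first use compactness to pass to a larger ambient model, so that after applying L-S-T there is sufficient relabeling room to recover a dense enumeration by $\vec{a}$.
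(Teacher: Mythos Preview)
Your treatment of the $\lambda$-Baire property is essentially the paper's: express $\mathcal{W}$ via the Tarski--Vaught test as an intersection of at most $\lambda$ open classes in $\Str_{\mathcal L}(\mathbf{S}\cup C)$, and then invoke Corollary~\ref{Corollary:UncountableVersionOfBaireCategoryTheoremForAnIntersectionOfOpenSubclasses} inside the compact regular closed subspace $\Mod_{\mathbf{S}\cup C}(T)$. The paper writes the Tarski--Vaught condition in the single-threshold form (against $\exists x\,\varphi_i=1$) rather than your two-threshold $(r,s)$ form, but these are interchangeable and the structure of the argument is identical.

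For nonemptiness the paper does something simpler than you do: it takes a model of $T$ of cardinality $\le\lambda$ via L\"owenheim--Skolem--Tarski and then interprets \emph{all} of $C$ by an enumeration of that model, so that $\langle\vec a\rangle$ equals the whole universe and membership in $\mathcal{W}$ is automatic. This is legitimate because, despite the phrase ``$(S\cup C)$-theory'' in the statement, the proof (and the only use of the proposition, in the proof of Theorem~\ref{Theorem:GeneralOmittingTypesTheorem}) treats $T$ as an $S$-theory, so the constants in $C$ carry no constraints. Your reassignment-of-free-constants argument and your worry about the ``degenerate case where $T$ mentions almost all of $C$'' arise only because you read the hypothesis literally. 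Your proposed compactness fix for that case does not work, and in fact nothing can: if $T$ contains $\{d(c_i,c_0)\le 0\}_{i<\lambda}$ together with an $S$-sentence forcing diameter at least $1/2$, then in every model $\langle\vec a\rangle$ is a single point, which cannot be an elementary substructure of a space of positive diameter, so $\mathcal{W}\cap\Mod(T)=\emptyset$. This is a looseness in the paper's statement rather than a defect in your plan; simply take $T$ to be an $S$-theory and the bookkeeping paragraph becomes unnecessary.
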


\begin{proof}
Let $\big( \varphi_{i}(x) \big)_{i < \lambda}$ be a list of all the $(S \cup C)$-formulas in one variable.  By the Tarski-Vaught Test (Theorem~\ref{Theorem:TarskiVaughtTestForContinuousLogic}), we have $(M,\vec{a}) \in \mathcal{W}$ if and only if the following condition holds:
\[
	\parbox{300pt}{for every $i < \lambda$ and $r \in \mathbb{Q}\cap(0,1)$, if $M \models_{\cl} \exists x \, \varphi_{i}(x)$, then $M\models_{\cl} \varphi_{i}(c_{j}) > r$ for some $j < \lambda$.}
\]
This means that
\[
	\mathcal{W} = \bigcap_{i < \lambda} \; \bigcap_{r \in \mathbb{Q} \cap (0,1)} \bigg( \Mod_{\mathbf{S}}\big( \exists x \, \varphi_{i}(x) < 1 \big) \cup \bigcup_{j < \lambda} \Mod_{\mathbf{S}} \big( \varphi_{i}(c_{j}) > r \big) \bigg).
\]
Thus, by Corollary~\ref{Corollary:UncountableVersionOfBaireCategoryTheoremForAnIntersectionOfOpenSubclasses}, the class $\mathcal{W}$ has the $\lambda$-Baire property.  The class  $\mathcal{W} \cap \Mod_{\mathbf{S}}(T)$ has the  $\lambda$-Baire property by  Corollary~\ref{Corollary:UncountableVersionOfBaireCategoryTheoremForAnIntersectionOfOpenSubclasses} and Proposition~\ref{Proposition:TheoryHasBaireProperty}.  This class is non empty because, by the downward Lšwenheim-Skolem-Tarski Theorem (Theorem~\ref{Theorem:DownwardLowenheimSkolemTheoremForContinuousLogic}),  $T$ is satisfied by an $\mathbf{S}$-structure $M$ of cardinality~$\le\lambda$, and thus interpreting the constants by an enumeration
$\vec{a}$ of $M$ yields $(M,\vec{a})\in\mathcal{W}$.
\end{proof}

\subsection{Proof of the general omitting types theorem} 

Note that if $\lambda$ is an uncountable cardinal and $\Sigma(\bar x)$ is $\lambda$-principal over $T$, where $\bar x=x_1,\dots, x_n$ and $t_1(\bar y),\dots,t_n(\bar y)$ are terms, then the type $\Sigma(t_1(\bar y),\dots,t_n(\bar y))$ is  $\lambda$-principal over $T$. The following lemma shows that in basic continuous logic the converse of this fact holds.

\begin{lemma}\label{L:principal type term substitution}
If $\lambda$ is an uncountable cardinal  and $\Sigma(x_1,\dots,x_n)$ is an $\mathbf{S}$-type of  $T$ such that $\Sigma(t_1(\bar y),\dots,t_n(\bar y))$ is $\lambda$-principal over $T$, where $t_1(\bar y),\dots,t_n(\bar y)$ are $S$-terms, then $\Sigma(x_1,\dots,x_n)$ is  $\lambda$-principal over $T$.
\end{lemma}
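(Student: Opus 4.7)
The plan is to push a generator of $\Sigma(\bar t(\bar y))$ forward through $\bar t$ to obtain a generator of $\Sigma(\bar x)$. Fix a generator $\Phi(\bar y)$ of $\Sigma(\bar t(\bar y))$ over $T$ with $|\Phi|<\lambda$, and define $\Psi(\bar x)$ to be the set of all formulas of the form
\[
\psi_{F,r}(\bar x)\ :=\ \exists\bar y\,\Bigl(\bigwedge_{\phi\in F}\phi(\bar y)\,\wedge\,\bigwedge_{j=1}^{n}\bigl(d(x_j,t_j(\bar y))\le r\bigr)\Bigr),
\]
where $F$ ranges over the finite subsets of $\Phi$ and $r$ over $\mathbb{Q}\cap(0,1)$. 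Each $\psi_{F,r}$ is a bona fide formula of basic continuous logic, and since $\lambda$ is uncountable, $|\Psi|\le\max(|\Phi|,\aleph_{0})<\lambda$. Consistency of $T\cup\Psi(\bar x)$ is immediate: if $N\models T$ and $\bar b\in N$ realizes $\Phi$, then $\bar t(\bar b)$ realizes every $\psi_{F,r}$ in $N$ via the witness $\bar b$.

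The core step is to show $T\cup\Psi(\bar x)\models\Sigma(\bar x)$. Suppose $M\models T$ and $\bar a\in M$ realizes $\Psi$. Unpacking the \L ukasiewicz semantics of $\psi_{F,r}$, the condition $\psi_{F,r}(\bar a)^{M}=1$ means that for every $\epsilon>0$ there is $\bar b\in M$ with $\phi(\bar b)^{M}>1-\epsilon$ for every $\phi\in F$ and $d(a_{j},t_{j}^{M}(\bar b))<r+\epsilon$ for each $j$. Consequently the theory comprising the elementary diagram of $M$ together with
\[
\Phi(\bar y)\ \cup\ \bigl\{\,d(t_{j}(\bar y),a_{j})\le r\ :\ 1\le j\le n,\ r\in\mathbb Q\cap(0,1)\,\bigr\}
\]
(where $\bar y$ are fresh constants) is finitely satisfiable. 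By compactness of basic continuous logic (Theorem~\ref{Theorem:CompactnessTheoremForContinuousLogic}), it has a model $M^{*}$ with $M\preceq_{\cl}M^{*}$ in which some tuple $\bar b^{*}$ realizes $\Phi$ and satisfies $d(t_{j}^{M^{*}}(\bar b^{*}),a_{j})=0$ for every $j$. Because the distinguished metric is a genuine metric, $t_{j}^{M^{*}}(\bar b^{*})=a_{j}$, so $\bar t(\bar b^{*})^{M^{*}}=\bar a$; by the generator property of $\Phi$, $\bar a$ realizes $\Sigma$ in $M^{*}$, and since $M\preceq_{\cl}M^{*}$, $\bar a$ realizes $\Sigma$ already in $M$.

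The main technical obstacle is the translation between the \emph{approximate} satisfaction captured by the \L ukasiewicz-implication abbreviation $d\le r$ and the \emph{exact} realization of $\Phi$ with $\bar t(\bar b)=\bar a$. Once this is handled by letting $\epsilon\to 0$ inside the compactness argument, everything falls out cleanly; the cardinality bookkeeping hinges only on the uncountability of $\lambda$.
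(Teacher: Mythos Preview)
Your proof is correct and shares the paper's overall strategy: push the generator $\Phi(\bar y)$ forward through the terms via existential formulas of the shape $\exists\bar y\bigl(\bigwedge_{\phi\in F}\phi(\bar y)\wedge\bigwedge_j d(x_j,t_j(\bar y))\le\cdots\bigr)$, then argue that this new family generates $\Sigma(\bar x)$. The difference lies in how the entailment $T\cup\Psi(\bar x)\models\Sigma(\bar x)$ is established.

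The paper works entirely inside the given model $M$: it invokes compactness to isolate a finite $I_0\subseteq I$ with $T\cup\{\phi_i\}_{i\in I_0}\models\sigma(\bar t(\bar y))\ge r'$, and then uses the uniform continuity moduli carried by the signature $\mathbf S$ to transfer this to $\sigma(\bar x)\ge r$ once $d(x_k,t_k(\bar y))$ is small. You instead bypass the moduli altogether by passing to an elementary extension $M^*\succeq M$ in which an \emph{exact} witness $\bar b^*$ with $\bar t(\bar b^*)=\bar a$ exists, applying the generator property there, and pulling the conclusion back to $M$. Your route is more conceptual and signature-independent; the paper's is more elementary and stays in one model.

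One point you should make precise: when you assert that $\mathrm{eldiag}(M)\cup\Phi(\bar y)\cup\{d(t_j(\bar y),a_j)\le r\}$ is finitely satisfiable, the approximate witnesses coming from $\psi_{F,r}(\bar a)^M=1$ only give $\phi(\bar b)^M>1-\epsilon$, not $\phi(\bar b)^M=1$. The clean fix is to observe that $\Phi(\bar y)$ and $\{\phi(\bar y)\ge s:\phi\in\Phi,\ s\in\mathbb Q\cap(0,1)\}$ have the same models, and the latter form is what your approximate witnesses satisfy directly. You allude to this (``letting $\epsilon\to 0$ inside the compactness argument''), but it deserves one explicit sentence.
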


\begin{proof}
Let $\{\phi_i(\bar y)\}_{i\in I}$ generate $\Sigma(t_1(\bar y),\dots,t_n(\bar y)\,\big)$ over $T$. 
For each finite $I_0\subseteq I$,  define $\psi_{I_0}(\bar x)$ as
\[
\exists \bar y \big(\,\bigwedge_{k\le n} d(x_k,t_k(\bar y))\le 0\land \bigwedge_{i\in I_0} \phi_i(\bar y)\,\big).
\]
We claim that the set of formulas of the form  $\psi_{I_0}(\bar x)$, where $I_0$ is a finite subset of $I$, generates  $\Sigma(\bar x)$ over $T$. The proof is a standard compactness argument in basic continuous logic, but since we are not assuming previous experience with continuous logic, we include the details below. 

Fix $\sigma(\bar x)\in\Sigma(\bar x)$ and rationals $r,r'$ such that $0<r<r'<1$. Since
$T\cup\{\phi_i(\bar y)\}_{i\in I}\models_{\cl,\mathbf{S}}\Sigma(t_1(\bar y),\dots,t_n(\bar y))$, by  compactness there exists a finite $I_0\subseteq I$ such that 
\[
\tag{*}
T\cup\big\{\,\bigwedge_{i\in I_0}\phi_i(\bar y)\,\big\}
\models_{\cl,\mathbf{S}}\sigma(t_1(\bar y),\dots,t_n(\bar y))\ge r.
\]
Since $\mathbf{S}$ includes uniform continuity moduli for all the predicate and operation symbols that occur in $ \sigma$, there exists  a rational $\delta\in(0,1)$ such that 
\[
\tag{**}
\bigwedge_{k\le n} d(x_k,t_k(\bar y))\le\delta \land \sigma(t_1(\bar y),\dots,t_n(\bar y))\ge r'
\models_{\cl,\mathbf{S}}\sigma(x_1,\dots,x_n)\ge r.
\]
By $(*)$ and $(**)$,
\[
T\cup\big\{\bigwedge_{k\le n} d(x_k,t_k(\bar y))\le\delta\land \bigwedge_{i\in I_0}\phi_i(\bar y)\,\big\}
\models_{\cl,\mathbf{S}}\sigma(x_1,\dots,x_n)\ge r.
\]
Since $\sigma$ is arbitrary and $r$ is arbitrarily close to 1, this shows that the set of formulas of the form   $\psi_J(\bar x)$, where $J$ is a finite subset of $I$,  generates $\Sigma(\bar x)$ over $T$.
\end{proof}

We now prove some lemmas that connect principality with the logical topology  $\tau_{\mathcal{L}}$ on $\Str_{\mathcal L}(\mathbf{S})$ (see Definition~\ref{Definition:LogicalTopology}).

Recall that if $X$ is a topological space and $x\in X$, then a $\lambda$-neighborhood of $x$ is an intersection of less than $\lambda$ neighborhoods of $x$. If $A$ is a subclass of $X$, the $\lambda$-interior of $A$ is the set of points in $x$ that have a $\lambda$-neighborhood contained in $A$.

\begin{lemma} \label{Lemma:EquivalenceOfPrincipalAndNonEmptyInterior}
Let $\Sigma (\bar{x})$ be an $\mathbf{S}$-type of a theory $T$ with $\ell(\bar x)=n$, and let $\lambda$ be an infinite cardinal.
\begin{enumerate}
\renewcommand{\theenumi}{\textup{(\arabic{enumi})}}
\renewcommand{\labelenumi}{\theenumi}
\item
If $\lambda $ is uncountable, then $\Sigma (\bar{x})$ is $\lambda$-principal over $T$ if and only if the class $\Mod_{\mathbf{S}\cup \{\bar{x}\}}\big(T\cup \Sigma (\bar{x})\big)$ has nonempty $\lambda $-interior in $\Mod_{\mathbf{S}\cup \{\bar{x}\}}(T)$. 
\item
If $\lambda =\omega$, then $\Sigma (\bar{x})$ is $\omega $-principal over $T$ if
and only if there exist terms $t_1(\bar y),\dots,t_n(\bar y)$ such that the class $\Mod_{\mathbf{S}\cup \{\bar{y}\}}\big(T\cup \Sigma (t_{1}(\bar{y}),\dots,t_n(\bar{y}))\big)$ has nonempty interior in $\Mod_{\mathbf{S}\cup \{\bar{y}\}}(T)$.
\end{enumerate}
\end{lemma}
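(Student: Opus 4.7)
The plan is to translate the algebraic condition $T \cup \Phi \models_{\mathcal{L},\mathbf{S}} \Sigma$ into the topological inclusion $\Mod_{\mathbf{S}\cup\{\bar x\}}(T) \cap \Mod_{\mathbf{S}\cup\{\bar x\}}(\Phi) \subseteq \Mod_{\mathbf{S}\cup\{\bar x\}}(\Sigma)$ and then compare this with the definition of $\lambda$-interior. Two bookkeeping identities from the $[0,1]$-valued semantics drive the proof: first, $\Mod(\phi) = \bigcap_{r \in \mathbb{Q}\cap(0,1)} \Mod(\phi > r)$, writing a ``satisfaction'' class as a countable intersection of basic open classes; second, $\Mod(\phi \leq s) = \Mod(\phi \lukimp s)$, turning a strict upper bound into genuine satisfaction of a sentence via a Pavelka constant.

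For part~(1), in the ``only if'' direction I would take a generator $\Phi$ with $|\Phi| < \lambda$, pick any $(M,\bar a) \models T \cup \Phi$, and write $\Mod(\Phi) = \bigcap_{\phi \in \Phi} \bigcap_{r} \Mod(\phi > r)$; since $\lambda$ is uncountable, this is an intersection of fewer than $\lambda$ basic open neighborhoods of $(M,\bar a)$, so $\Mod(T) \cap \Mod(\Phi)$ is a $\lambda$-neighborhood of $(M,\bar a)$ in $\Mod(T)$ that sits inside $\Mod(T \cup \Sigma)$. For the converse, I would start with a $\lambda$-neighborhood of some $(M,\bar a) \in \Mod(T \cup \Sigma)$ in $\Mod(T)$, unpack it as $\Mod(T) \cap \bigcap_{i < \mu} \Mod(\phi_i < 1)$ with $\mu < \lambda$ and $\phi_i^{(M,\bar a)} < 1$, choose rationals $s_i$ with $\phi_i^{(M,\bar a)} < s_i < 1$, and set $\Phi = \{\phi_i \lukimp s_i : i < \mu\}$. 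The identity $\Mod(\phi_i \lukimp s_i) = \Mod(\phi_i \leq s_i) \subseteq \Mod(\phi_i < 1)$ yields $T \cup \Phi \models \Sigma$, while $|\Phi| < \lambda$ and $(M,\bar a) \models T \cup \Phi$ supply the remaining clauses of $\lambda$-principality.

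For part~(2), the ``only if'' direction is direct: given terms $t_1,\dots,t_n$, a formula $\phi(\bar y)$, and a rational $r \in (0,1)$ with $T \cup \{\phi\}$ consistent and $T \cup \{\phi \geq r\} \models \Sigma(t_1(\bar y),\dots,t_n(\bar y))$, I pick $(M,\bar b) \models T \cup \{\phi\}$; then the basic open set $\Mod(\phi > r) = \Mod((\phi \lukimp r) < 1)$ is an open neighborhood of $(M,\bar b)$ in $\Mod(T)$, and $\Mod(\phi > r) \subseteq \Mod(\phi \geq r)$ places it inside $\Mod(T \cup \Sigma(t_1,\dots,t_n))$. For the converse I would take an interior point $(M,\bar b)$ of $\Mod(T \cup \Sigma(t_1,\dots,t_n))$ in $\Mod(T)$ and use the identity $\Mod(\chi_1 < 1) \cap \Mod(\chi_2 < 1) = \Mod((\chi_1 \vee \chi_2) < 1)$ to isolate a single sentence $\chi(\bar y)$ with $c := \chi^{(M,\bar b)} < 1$ and $\Mod(T) \cap \Mod(\chi < 1) \subseteq \Mod(T \cup \Sigma(t_1,\dots,t_n))$. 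Choosing rationals $c'$ and $r$ with $c < c' < r < 1$, I would set $\psi(\bar y) := \chi \lukimp c'$; then $\psi^{(M,\bar b)} = 1$ and $\Mod(\psi \geq r) = \Mod(\chi \leq 1 - r + c') \subseteq \Mod(\chi < 1)$, producing a generator of the form required by the definition of $\omega$-principal.

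The main obstacle is the converse of part~(2): the open-neighborhood data must be compressed into a single formula $\psi$ whose $r$-cut still implies $\Sigma(t_1,\dots,t_n)$, for some rational $r < 1$. The maneuver $\psi = \chi \lukimp c'$ with a rational $c'$ strictly between the real value $\chi^{(M,\bar b)}$ and the rational threshold $r$ is the pivotal step: it simultaneously yields $\psi^{(M,\bar b)} = 1$, which is needed for consistency of $T \cup \{\psi\}$, and the shrinkage $\Mod(\psi \geq r) \subseteq \Mod(\chi < 1)$, which is needed for condition~(b) in the definition of $\omega$-principal.
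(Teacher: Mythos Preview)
Your argument is correct and follows essentially the same route as the paper: translate generation into a topological inclusion, use that $\Mod(\phi)=\bigcap_{r}\Mod(\phi>r)$ (a countable, hence $<\lambda$, intersection of basic opens) for the forward directions, and use Pavelka constants to pass from a basic open neighborhood back to a closed satisfaction condition for the converses. Your bookkeeping uses $\Mod(\phi<1)$ where the paper uses $\Mod(\phi>0)$, which is the same base up to replacing $\phi$ by $\neg\phi$.

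One point where you are actually more careful than the paper: in the converse of part~(2), the paper picks a single $\phi$ with $\emptyset\neq\Mod(T)\cap\Mod(\phi>0)\subseteq\Mod(T\cup\Sigma(t_1,\dots,t_n))$, chooses $r$ so that $T\cup\{\phi\ge r\}$ is satisfiable, and declares $\omega$-principality. Strictly speaking, this verifies clause~(b) of Definition~3.5 but not clause~(a), since satisfiability of $T\cup\{\phi\ge r\}$ does not give satisfiability of $T\cup\{\phi\}$. Your maneuver $\psi:=\chi\lukimp c'$ with $c<c'<r<1$ fixes this cleanly: $\psi^{(M,\bar b)}=1$ gives the consistency of $T\cup\{\psi\}$ needed for (a), while $\Mod(\psi\ge r)=\Mod(\chi\le 1-r+c')\subseteq\Mod(\chi<1)$ gives (b). The paper's argument can be repaired the same way (or by taking $\phi\ge r$ itself as the generator and invoking Proposition~1.15 to handle (b)), so this is a presentational rather than a substantive difference.
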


\begin{proof}
Assume that $\lambda $ is uncountable and $\Sigma (\bar{x})$ is $
\lambda $-principal over $T$, and let $\{\varphi _{i}(\bar{x})\}
_{i<\mu }$ generate $\Sigma (\bar{x})$ over $T$, where $\mu <\lambda$. The class
\[
\Mod_{\mathbf{S}\cup \{\bar{x}\}}(\varphi _{i}(\bar{x}))=
\bigcap _{r\in \mathbb{Q}\cap(0,1)}\Mod_{\mathbf{S}\cup \{\bar{x}\}}(\varphi _{i}(\bar{x})>r)
\]
is $\lambda$-open and hence so is 
\[
\Mod_{\mathbf{S}\cup \{\bar{x}\}}(\{\varphi _{i}(\bar{x})\}_{i<\mu })=\bigcap _{i<\mu }\Mod_{\mathbf{S}\cup \{\bar{x}\}}(\varphi _{i}(\bar{x})).
\]
Since $T\cup \{\varphi _{i}(\bar{x})\}_{i<\mu }$ is satisfiable by an $\mathbf{S}$-structure and $T\cup\{\varphi _{i}(\bar{x})\}_{i<\mu }\models _{\cl\mathbf{S}}\Sigma (\bar{x})$, the class
\[
\Mod_{\mathbf{S}\cup \{\bar{x}\}}(T)\cap\Mod_{\mathbf{S}\cup \{\bar{x}\}}((\{\varphi _{i}(\bar{x})\}_{i<\mu })
\]
is a nonempty $\lambda $-open subclass of $\Mod_{\mathbf{S}\cup \{\bar{x}\}}(T)$ contained in $\Mod_{\mathbf{S}\cup \{\bar{x}\}}(\Sigma (\bar{x}))$.

Suppose, conversely, that $\Mod_{\mathbf{S}\cup \{\bar{x}\}}(T\cup \Sigma (\bar{x}))$ has nonempty $\lambda $-interior in $\Mod_{\mathbf{S}\cup \{\bar{x}\}}(T)$. Then there exist $\mu <\lambda $ and  formulas $\varphi _{i}(\bar{x})$ for $i<\mu $ such that 
\begin{equation*}
\Mod_{\mathbf{S}\cup \{\bar{x}\}}(T)\cap \bigcap_{i<\mu }\Mod_{\mathbf{S}\cup \{\bar{x}\}}(\varphi _{i}(\bar{x})>0)
\end{equation*}
is a nonempty subclass of $\Mod_{\mathbf{S}\cup \{\bar{x}\}}(T\cup \Sigma (\bar{x}))$. Choose $r_{i}\in \mathbb{Q}\cap(0,1)$ such that $T\cup \{\varphi _{i}(\bar{x})\geq r_{i}\}_{i<\mu }$ is satisfiable by an $\mathbf{S}$-structure. Then, 
\begin{equation*}
T\cup \{\varphi _{i}(\bar{x})\geq r_{i}\}_{i<\mu }\models _{\cl,\mathbf{S}}\Sigma (\bar{x}),
\end{equation*}
which finishes the proof of the uncountable case.

For the case $\lambda =\omega$, suppose that $\varphi(\bar y)$ generates $\Sigma (t_{1}(\overline{y}),\dots,t_n(\overline{y}))$ over $T$ and $T\cup\{\phi(\bar{y})\ge r\} \models_{\mathcal{L},\mathbf{S}}\Sigma(t_1(\bar y),\dots,t_n(\bar y))$, where $t_{1}(\overline{y}),\dots,t_n(\overline{y})$ are terms and $r\in\mathbb{Q}\cap(0,1)$. Since, by the definition of generator, $\Mod_{\mathbf{S}\cup \{\bar{y}\}}(T\cup\{\phi(\bar{y})\})$ is nonempty, if $r'\in\mathbb{Q}\cap(r,1)$, the class $\Mod_{\mathbf{S}\cup \{\bar{y}\}}(T)\cap\Mod_{\mathbf{S}\cup \{\bar{y}\}}(\varphi (\bar{y})>r^{\prime })$
is a nonempty open subclass of $\Mod_{\mathbf{S}\cup \{\bar{y}\}}(T)$ contained in  $\Mod_{\mathbf{S}\cup \{\bar{y}\}}(\Sigma
(t_{1}(\bar{y}),\dots,t_{n }(\bar{y}))).$

To finish the proof for the countable case, assume that $\Mod_{\mathbf{S}\cup \{\bar{y}\}}(T\cup \Sigma (t_{1}(\bar{y}),\dots,t_{n }(\bar{y})))$ has nonempty interior. Then there is a formula $\phi(\bar y)$ such that
\[
\emptyset\neq\Mod_{\mathbf{S}\cup \{\bar{y}\}}(T)\cap
\Mod_{\mathbf{S}\cup \{\bar{y}\}}(\varphi(\bar{y})>0)\subseteq 
\Mod_{\mathbf{S}\cup \{\bar{y}\}}(T\cup \Sigma (t_{1}(\bar{y}),\dots,t_{n }(\bar{y}))).
\]
As before, choose $r\in \mathbb{Q}\cap(0,1)$ such that $T\cup \{\varphi(\bar{y})\geq r\}$ is satisfiable by an $\mathbf{S}$-structure. Then,
\begin{equation*}
T\cup \{\varphi(\bar{y})\geq r\}\models _{\cl,\mathbf{S}}
T\cup \{\varphi(\bar{y})>0\}\models _{\cl,\mathbf{S}}\Sigma (t_{1}(\bar{y}),\dots,t_{n }(\bar{y})).
\end{equation*}%
This shows that  $\Sigma (\bar{x})$ is $\omega$-principal over $T$. 
\end{proof}

For the next lemma, we follow the notation used in Proposition~\ref{Proposition:ClassOfStructuresWithWitnessesHasBaireProperty}. Let $T$ be an $S$-theory.  For $\textbf{\textup{i}} = i_{1},\dots,i_{n} \in \lambda$, let $R_{T,\textbf{i}}$ be the map
\begin{align*}
	R_{T,\textbf{\textup{i}}} : \mathcal{W} \cap \Mod_{\mathbf{S} \cup C}(T) &\to \Mod_{\mathbf{S} \cup \{c_{i_{1}},\dots,c_{i_{n}}\}}(T) \\
	(M,\vec{a}) &\mapsto (M,a_{i_{1}},\dots,a_{i_{n}}).
\end{align*}

\begin{lemma} \label{Lemma:ProjectionsAreContinuousOpenAndxive}
The map $R_{T,\textbf{\textup{i}}}$ is continuous, open and surjective.
\end{lemma}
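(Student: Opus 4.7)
The plan is to verify the three properties in sequence, using the downward L\"{o}wenheim--Skolem--Tarski theorem (Theorem~\ref{Theorem:DownwardLowenheimSkolemTheoremForContinuousLogic}) as the main tool. For \emph{surjectivity}, given $(N,b_1,\dots,b_n) \in \Mod_{\mathbf{S} \cup \{c_{i_1},\dots,c_{i_n}\}}(T)$, L\"{o}wenheim--Skolem--Tarski (using $|S|\le\lambda$) produces an elementary substructure $N_0 \preceq_{\cl} N$ of cardinality $\le \lambda$ containing $b_1,\dots,b_n$; any enumeration $\vec{a}$ of $N_0$ with $a_{i_j} = b_j$ (padded with repetitions to length $\lambda$) then places $(N,\vec{a})$ in $\mathcal{W} \cap \Mod_{\mathbf{S}\cup C}(T)$ with the correct image under $R_{T,\textbf{i}}$, since $\langle\vec{a}\rangle = N_0$ is an elementary substructure contained in the range of $\vec{a}$. \emph{Continuity} is equally direct: a subbasic closed set in the codomain is $\Mod_{\mathbf{S} \cup \{c_{i_1},\dots,c_{i_n}\}}(\psi)$ for some sentence $\psi$, and its preimage under $R_{T,\textbf{i}}$ is $\mathcal{W} \cap \Mod_{\mathbf{S}\cup C}(T) \cap \Mod_{\mathbf{S}\cup C}(\psi)$, closed in the subspace.

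\emph{Openness} will be the main obstacle. Closure of the logic under finite disjunctions collapses any finite intersection of subbasic opens $\Mod(\varphi_k < 1)$ into $\Mod(\bigvee_k \varphi_k < 1)$, so a base of open sets for $\mathcal{W} \cap \Mod_{\mathbf{S}\cup C}(T)$ is given by the sets
\[
U_\varphi \;=\; \mathcal{W} \cap \Mod_{\mathbf{S}\cup C}(T) \cap \Mod_{\mathbf{S}\cup C}(\varphi < 1),
\]
where $\varphi$ ranges over single $(\mathbf{S} \cup C)$-sentences. By the finite occurrence property, $\varphi$ involves only finitely many constants from $C$, so after padding with vacuous occurrences I may write $\varphi = \varphi(c_{i_1},\dots,c_{i_n},c_{k_1},\dots,c_{k_p})$ with the $c_{k_j}$ disjoint from the $c_{i_j}$. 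My key claim will be
\[
R_{T,\textbf{i}}(U_\varphi) \;=\; \Mod_{\mathbf{S} \cup \{c_{i_1},\dots,c_{i_n}\}}(T) \cap \Mod_{\mathbf{S} \cup \{c_{i_1},\dots,c_{i_n}\}}\bigl(\exists \bar y \, \neg\varphi(c_{i_1},\dots,c_{i_n},\bar y) > 0\bigr),
\]
whose right-hand side is visibly open. The inclusion $\subseteq$ is immediate: if $(N,\vec{a}) \in U_\varphi$ and $\bar b = (a_{i_1},\dots,a_{i_n})$, then the tuple $\bar d := (a_{k_1},\dots,a_{k_p})$ satisfies $\neg\varphi^{(N,\bar b,\bar d)} > 0$, forcing $(\exists \bar y \, \neg\varphi)^{(N,\bar b)} = \sup_{\bar y \in N^p} \neg\varphi^{(N,\bar b,\bar y)} > 0$.

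The heart of the proof lies in the reverse inclusion $\supseteq$, where I must manufacture the Tarski--Vaught condition built into $\mathcal{W}$ from a single existential witness. Given $(N,\bar b)$ in the right-hand side, I first extract an actual tuple $\bar d \in N^p$ with $\varphi^{(N,\bar b, \bar d)} < 1$; a second application of L\"{o}wenheim--Skolem--Tarski then yields $N_0 \preceq_{\cl} N$ of cardinality $\le \lambda$ containing both $\bar b$ and $\bar d$, which I enumerate as $\vec{a}$ with $a_{i_j} = b_j$ and $a_{k_j} = d_j$. This puts $(N,\vec{a})$ in $\mathcal{W} \cap \Mod_{\mathbf{S}\cup C}(T)$, and since $\varphi$ depends only on the finitely many coordinates $c_{i_j},c_{k_j}$ of $\vec{a}$, we have $\varphi^{(N,\vec{a})} = \varphi^{(N,\bar b,\bar d)} < 1$, so $(N,\vec{a}) \in U_\varphi$ with $R_{T,\textbf{i}}(N,\vec{a}) = (N,\bar b)$. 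The delicate point is precisely the interplay between the existential quantifier over finitely many coordinates in the codomain formula and the L\"{o}wenheim--Skolem padding producing an element of $\mathcal{W}$ in the domain.
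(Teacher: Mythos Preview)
Your proof is correct and takes essentially the same approach as the paper. The paper combines openness and surjectivity into a single claim (that $R_{T,\textbf{i}}$ maps $\Mod(\varphi)^c\cap\mathcal{W}\cap\Mod(T)$ onto $\Mod(\theta)^c\cap\Mod(T)$ where $\theta=\forall\bar y\,\varphi$) while you separate surjectivity out, and the paper phrases the target open set as $\Mod(\forall\bar y\,\varphi<1)$ rather than your equivalent $\Mod(\exists\bar y\,\neg\varphi>0)$; but the substance---quantifying out the extra constants and invoking downward L\"owenheim--Skolem to land back in $\mathcal{W}$---is identical.
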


\begin{proof}
For notational simplicity, we will consider the case when $n = 1$.  Continuity follows directly from the fact that any $(S \cup \{c_{i_{1}}\})$-sentence is also an $(S \cup C)$-sentence.

Now, with each $(S \cup C)$-sentence $\varphi = \varphi(c_{1},\dots,c_{i_{1}},\dots,c_{m})$, with all the constants in $C$ exhibited, let us associate the $(S \cup \{c_{i_{1}}\})$-sentence $\theta(c_{i_{1}})$ defined as
\[
	\forall x_{1} \cdots \forall x_{m} \, \varphi(x_{1},\dots,c_{i_{1}},\dots,x_{m}).
\]
To show that $R_{T,\textbf{i}}$ is open and surjective, it suffices to show that $R_{T,\textbf{i}}$ maps $\Mod_{\mathbf{S}\cup C}(\varphi)^{c} \cap \mathcal{W}\cap\Mod_{\mathbf{S} \cup C}(T)$ onto $\Mod_{\mathbf{S}\cup\{c_{i_1}\}} \big( \theta(c_{i_{1}}) \big)^{c}\cap\Mod_{\mathbf{S} \cup \{c_{i_{1}}\}}(T)$.

Suppose $(M,\vec{a}) \in\Mod_{\mathbf{S}\cup C}(\varphi)^{c} \cap \mathcal{W}\cap\Mod_{\mathbf{S} \cup C}(T)$, and let $r \in \mathbb{Q}\cap(0,1)$ be such that $(M,\vec{a}) \models_{\cl} \varphi \leq r$.  We certainly have $(M,a_{i_{1}}) \models_{\cl} \theta(c_{i_{1}}) \leq r$, so $(M,a_{i_{1}}) \in \Mod_{\mathbf{S}\cup\{c_{i_1}\}} \big( \theta(c_{i_{1}}) \big)^{c}\cap\Mod_{\mathbf{S} \cup \{c_{i_{1}}\}}(T)$.  

Suppose, conversely, that $(M,a_{i_{1}}) \in \Mod_{\mathbf{S}\cup\{c_{i_1}\}} \big( \theta(c_{i_{1}}) \big)^{c}\cap\Mod_{\mathbf{S} \cup \{c_{i_{1}}\}}(T)$, and let $r \in \mathbb{Q}\cap(0,1)$ be such that $(M,a_{i_{1}}) \models_{\cl} \theta(c_{i_{1}}) \leq r$.  Pick $r'\in\mathbb{Q}\cap(r,1)$. Then there are elements $a_{k} \in M$, for $k \leq m$ and $k \neq i_{1}$, such that $(M,a_{1},\dots,a_{m}) \models_{\cl} \varphi \leq r'$.  Since $|T|\le \lambda$, the downward L\"{o}wenheim-Skolem Theorem (Theorem~\ref{Theorem:DownwardLowenheimSkolemTheoremForContinuousLogic}) guarantees that there is an elementary substructure $M_{0}$ of $M$ of cardinality~$\le\lambda$ containing $a_{1},\dots,a_{m}$.  Using the constants  $c_j$ with $j\notin\{i_1,\dots,i_n\}$ to name the remaining elements of $M_{0}$, we see that $(M,\vec{a}) \in \Mod_{\mathbf{S}\cup C}(\varphi)^{c} \cap \mathcal{W}\cap\Mod_{\mathbf{S} \cup C}(T)$.
\end{proof}

We now have the material we need to prove Theorem~\ref{Theorem:GeneralOmittingTypesTheorem}:

\begin{proof}[Proof of the omitting types theorem]
Let $T$ be theory of cardinality $\leq \lambda$ that is satisfied by an $\mathbf{S}$-structure and let $\{\Sigma_{j}(x_1,\dots,x_n)\}_{j < \lambda}$ be a set of types that are not $\lambda$-principal over $T$. By hypothesis in the countable case, and by Lemma~\ref{L:principal type term substitution} in the uncountable case, the types $\Sigma_{j}(t_{1}(\bar{y}),\dots,t_n(\bar{y}))$ are not principal over $T$, for any choice of terms $t_{1}(\bar{y}),\dots,t_n(\bar{y})$; hence,  without loss of generality, we may assume that  
\[\tag{*}
\text{$\Sigma(t_{1}(\bar{y}),\dots,t_n(\bar{y}))$ is on the list whenever $\Sigma(x_1,\dots,x_n)$ is.} 
\]
Let $C$ be as before. By Lemma~\ref{Lemma:EquivalenceOfPrincipalAndNonEmptyInterior}, for any $\textbf{i} = i_{1},\dots,i_{n}$ and any $j < \lambda$, the class
\[
	\Mod_{\mathbf{S} \cup \{c_{i_{1}},\dots,c_{i_{n}}\}} \big( T \cup \Sigma_{j}(c_{i_{1}},\dots,c_{i_{n}}) \big)
\]
is closed with empty $\lambda$-interior.  Therefore, by Lemma~\ref{Lemma:ProjectionsAreContinuousOpenAndxive}, so is
\[
	\mathcal{C}_{T,\textbf{i}} = R_{T,\textbf{i}}^{-1} \big( \Mod_{\mathbf{S} \cup \{c_{i_{1}},\dots,c_{i_{n}}\}} \big( T \cup \Sigma_{j}(c_{i_{1}},\dots,c_{i_{n}}) \big) \big).
\]
Since $\mathcal{W} \cap \Mod_{S \cup C}(T)$ is nonempty and has the $\lambda$-Baire Property, there is 
\[
(M,\vec{a}) \in \mathcal{W} \cap \Mod_{\mathbf{S} \cup C}(T) \setminus \bigcup_{\textbf{i}} \mathcal{C}_{T,\textbf{i}}.
\]
Thus, for any $j < \lambda$,  no subset of $\vec{a}$ realizes $\Sigma_{j}(\bar{x})$ in $M$. Furthermore, by our assumption~$(*)$ above, no subset of $\langle \vec{a}\rangle$ realizes $\Sigma_{j}(\bar{x})$ in $M$. This means that $M\upharpoonright \langle \vec{a}\rangle $ omits each $\Sigma _{j}(\bar{x})$. The structure  $M\upharpoonright \langle \vec{a}\rangle $ is a model of $T$ because $M\upharpoonright \langle \vec{a}\rangle \prec _{\cl}M$, since $M\in\mathcal{W}$.
\end{proof}

\subsection{Omitting types in complete structures}
\label{S:Omitting types in complete structures}

Here, the background logic is basic  continuous logic, and $\mathbf{S}$ denotes a fixed signature with vocabulary $S$.

If $\Sigma(x_1,\dots,x_n)$ is a type and $\delta\in\mathbb{Q}\cap[0,1]$, we denote by $\Sigma^\delta(x_1,\dots,x_n)$ the type consisting of all the formulas of the form
\[
\exists y_1\dots\exists y_n\,
\big(\, \bigwedge_{k\le n} d(x_k,y_k)\le \delta \land \sigma(y_1,\dots,y_n) \,\big),
\]
where $\sigma$ ranges over all finite conjunctions of formulas in $\Sigma$.

Note that if $\bar a=a_1,\dots,a_n$ realizes $\Sigma$ in a structure $M$, then every point in the closed $\delta$-ball of $\bar a$ realizes $\Sigma^\delta$, that is, if $\bar b=b_1\dots,b_n\in M$ is such that $\max_{k\le n} d(a_k,b_k)\le \delta$, then $\bar b$ realizes $\Sigma^\delta$. 

\begin{definition} 
\label{D:metrically principal types}
Let $T$ be a consistent $S$-theory and let $\Sigma(\bar{x})$ be an $\mathbf{S}$-type of $T$.
\begin{enumerate}
\item
If $\lambda$ is an uncountable cardinal, we will say that  $\Sigma(\bar{x})$ is \emph{metrically $\lambda$-principal} over $T$ if  $\Sigma^\delta$ is $\lambda$-principal over $T$ for every $\delta>0$.
\item
We will say that a $\Sigma$ is \emph{metrically $\omega$-principal} over $T$ if for every $\delta>0$ there is a formula $\phi(\bar x)$ such that
\begin{enumerate}
\item
$\phi(\bar x)$ generates  $\Sigma^\delta(\bar x)$ over $T$, and
\item
$T\cup\{\phi(\bar{y})\ge r\} \models_{\mathcal{L},\mathbf{S}}\Sigma^\delta(\bar x)$, for some $r\in\mathbb{Q}\cap(0,1)$.
\end{enumerate}
\end{enumerate}
\end{definition}

\begin{proposition}
 \label{P:metrically principal types}
Let $T$ be a consistent $S$-theory and let $\Sigma(\bar x)$ be an $\mathbf{S}$-type of $T$.
Then, for every infinite cardinal $\lambda$, the type $\Sigma$ is metrically $\lambda$-principal over $T$ if and only if $\Sigma^\delta$ is $\lambda$-principal over $T$ for every $\delta>0$.
\end{proposition}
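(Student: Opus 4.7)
For uncountable $\lambda$, the statement is immediate from Definition~\ref{D:metrically principal types}(1), which defines ``metrically $\lambda$-principal'' precisely as ``$\Sigma^\delta$ is $\lambda$-principal over $T$ for every $\delta>0$.'' The substantive content is therefore the case $\lambda=\omega$, on which I focus. The forward direction there is also easy: if $\Sigma$ is metrically $\omega$-principal, witnessed by formulas $\phi_\delta(\bar x)$, then for each $\delta>0$ the trivial terms $t_k(\bar y)=y_k$, together with $\phi_\delta$ rewritten in variables $\bar y$, show that $\Sigma^\delta$ is $\omega$-principal over $T$ in the sense of Definition~\ref{Definition:PrincipalType}(2).

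For the backward direction with $\lambda=\omega$, fix $\delta''>0$ and pick $\delta,\delta'>0$ with $\delta+\delta'<\delta''$. By hypothesis, $\Sigma^\delta$ is $\omega$-principal, so there exist terms $t_1(\bar y),\dots,t_n(\bar y)$, a formula $\phi(\bar y)$, and a rational $r\in\mathbb{Q}\cap(0,1)$ such that $\phi(\bar y)$ generates $\Sigma^\delta(t_1(\bar y),\dots,t_n(\bar y))$ over $T$ and
\[
T\cup\{\phi(\bar y)\ge r\}\ \models_{\cl,\mathbf{S}}\ \Sigma^\delta(t_1(\bar y),\dots,t_n(\bar y)).
\]
The plan is to absorb the term substitution into the metric slack via the formula
\[
\psi(\bar x)\;:=\;\exists\bar y\,\Big(\bigwedge_{k\le n} d(x_k,t_k(\bar y))\le\delta'\;\land\;\phi(\bar y)\Big),
\]
and to show that $\psi$ generates $\Sigma^{\delta''}(\bar x)$ over $T$ together with a matching $r'$-condition. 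Satisfiability of $T\cup\{\psi\}$ is routine: any $\mathbf{S}$-model $N$ of $T\cup\{\phi\}$ with realization $\bar b$ yields $(N,t_1^N(\bar b),\dots,t_n^N(\bar b))\models\psi$.

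The main step is to exhibit $r'\in\mathbb{Q}\cap(0,1)$ with $T\cup\{\psi\ge r'\}\models_{\cl,\mathbf{S}}\Sigma^{\delta''}(\bar x)$. The main obstacle here is that the existential quantifier in $\psi$ is a supremum that need not be attained, and that ``realizing $\Sigma^\delta$'' is itself an approximate condition, so two layers of approximation must be controlled simultaneously. Given $\psi^M[\bar a]\ge r'$, for each $\epsilon>0$ I will pick $\bar b$ with $\phi^M(\bar b)\ge r'-\epsilon$ and $d(a_k,t_k^M(\bar b))\le\delta'+(1-r')+\epsilon$ for all $k$; assuming $r'-\epsilon\ge r$, the $r$-condition then supplies, for any finite conjunction $\sigma\in\Sigma$ and any $\epsilon'>0$, a tuple $\bar c$ with $d(t_k^M(\bar b),c_k)\le\delta+\epsilon'$ and $\sigma^M(\bar c)\ge 1-\epsilon'$. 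The triangle inequality yields $d(a_k,c_k)\le(\delta+\delta')+(1-r')+\epsilon+\epsilon'$. Choosing $r'$ close enough to $1$ so that $(\delta+\delta')+(1-r')<\delta''$, and then taking $\epsilon,\epsilon'$ sufficiently small, forces $d(a_k,c_k)\le\delta''$, which makes the corresponding formula of $\Sigma^{\delta''}$ evaluate to at least $1-\epsilon'$ at $\bar a$; letting $\epsilon'\to 0$ shows it equals $1$. This is precisely the point where the strict inequality $\delta+\delta'<\delta''$ is used, and it is the reason the proof goes through despite the lack of attained suprema in continuous logic.
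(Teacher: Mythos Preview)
Your proof is correct and follows essentially the same route as the paper's: for the nontrivial backward direction at $\lambda=\omega$, both arguments build the generator for $\Sigma^{\delta''}(\bar x)$ as an existential formula $\exists\bar y\big(\bigwedge_k d(x_k,t_k(\bar y))\le\delta'\land\phi(\bar y)\big)$ and then chase the approximations through the supremum. The only cosmetic differences are that the paper takes $\delta'=0$ and phrases the estimate by pointing to the proof of Lemma~\ref{L:principal type term substitution} (invoking the uniform continuity moduli), whereas you keep $\delta'>0$ and use the triangle inequality directly; your version is arguably cleaner here, since the metric slack in $\Sigma^\delta$ makes the uniform-continuity appeal unnecessary.
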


\begin{proof}
For uncountable $\lambda$ the proposition is true by definition.  For $\lambda=\omega$, we only need to prove that if $\Sigma(x_1,\dots,x_n)$ is an $\mathbf{S}$-type of  $T$ such that $\Sigma(t_1(\bar y),\dots,t_n(\bar y))$ is metrically $\omega$-principal over $T$, where $t_1(\bar y),\dots,t_n(\bar y)$ are $S$-terms, then $\Sigma(x_1,\dots,x_n)$ is  metrically $\omega$-principal over $T$. Using the uniform continuity moduli of $\mathbf{S}$ as in the proof of Lemma~\ref{L:principal type term substitution} one can see that for every $\epsilon\in\mathbb{Q}\cap(0,1)$ there exist $\delta,\rho\in\mathbb{Q}\cap(0,1)$ such that if $\phi(\bar x)$ generates  $\Sigma^\delta(\bar x)$ over $T$, and
\[
T\cup\{\phi(\bar{y})\ge r\} \models_{\mathcal{L},\mathbf{S}}\Sigma^\delta(t_1(\bar y),\dots,t_n(\bar y)),
\]
then
\[
\exists \bar u \big(\,\bigwedge_{k\le n} d(x_k,t_k(\bar u))\le 0\land \phi(\bar u)\,\big) \ge \rho
 \models_{\mathcal{L},\mathbf{S}}\Sigma^\epsilon(x,\dots,x).
\]

\end{proof}

\begin{proposition}[$\lambda$-Omitting Types Property for Complete Structures]
\label{P:omitting types in complete structures}
 Let $\lambda$ be any infinite cardinal. If $T$ is an $S$-theory of cardinality $\leq \lambda$ that is satisfied by an $\mathbf{S}$-structure and  $\{\Sigma_{j}(\bar{x})\}_{j < \lambda}$ is a set of $\mathbf{S}$-types that are not metrically $\lambda$-principal over $T$ there is a model of $T$ of cardinality~$\le\lambda$ whose metric completion omits each $\Sigma_{j}(\bar{x})$.
\end{proposition}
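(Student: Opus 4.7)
The plan is to apply the general omitting types theorem (Theorem~\ref{Theorem:GeneralOmittingTypesTheorem}) to suitable \emph{metric thickenings} $\Sigma_j^{\delta_j}$ of the given types, produce a model $M$ omitting all the thickenings, and then show that the metric completion $\overline{M}$ omits the original types $\Sigma_j$. The link between the two is the density of $M$ in $\overline{M}$ together with Corollary~\ref{C:StructuresAreElementarySubstructuresOfCompletions}, which tells us that $M\prec_{\cl}\overline{M}$.

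For each $j<\lambda$, since $\Sigma_j$ is not metrically $\lambda$-principal over $T$, Proposition~\ref{P:metrically principal types} yields a rational $\delta_j>0$ such that $\Sigma_j^{\delta_j}(\bar{x})$ is not $\lambda$-principal over $T$. Each $\Sigma_j^{\delta_j}$ is consistent with $T$: any $\mathbf{S}$-realization $\bar a$ of $\Sigma_j$ in a model of $T$ witnesses $\Sigma_j^{\delta_j}$ by taking $\bar y=\bar a$. So I may apply Theorem~\ref{Theorem:GeneralOmittingTypesTheorem} to the family $\{\Sigma_j^{\delta_j}(\bar{x})\}_{j<\lambda}$ to obtain a model $M\models T$ that omits every $\Sigma_j^{\delta_j}$. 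Inspection of that proof (and Theorem~\ref{Theorem:DownwardLowenheimSkolemTheoremForContinuousLogic}) gives $|M|\le\lambda$; if necessary, pass to an elementary substructure of cardinality $\le\lambda$ containing a fixed dense set, since omission is trivially preserved downward under substructure.

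To finish I verify that $\overline{M}$ omits each $\Sigma_j$. Suppose, toward a contradiction, that $\bar b\in\overline{M}^n$ realizes $\Sigma_j$ in $\overline{M}$. By density, choose $\bar a\in M^n$ with $\max_{k\le n}d(a_k,b_k)\le\delta_j$. For every finite conjunction $\sigma(\bar y)$ of formulas from $\Sigma_j$ we have $\sigma^{\overline{M}}(\bar b)=1$, and taking the witness $\bar y=\bar b$ shows that
\[
\exists \bar y\Big(\,\bigwedge_{k\le n}d(x_k,y_k)\le\delta_j\ \land\ \sigma(\bar y)\,\Big)
\]
has value $1$ at $\bar a$ in $\overline{M}$. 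Because $M\prec_{\cl}\overline{M}$, the same formula has value $1$ at $\bar a$ in $M$, so $\bar a$ realizes $\Sigma_j^{\delta_j}$ in $M$, contradicting the choice of $M$. The main conceptual obstacle is precisely this transition: one must recognize that while omission of $\Sigma_j$ in $M$ is the wrong condition (it does not pass to $\overline{M}$), omission of the thickened types $\Sigma_j^{\delta_j}$ in $M$ does, and that ``not metrically $\lambda$-principal'' is exactly what makes these thickened types eligible for Theorem~\ref{Theorem:GeneralOmittingTypesTheorem}.
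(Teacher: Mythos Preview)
Your proof is correct and follows exactly the approach the paper intends: the paper's proof reads in full ``Immediate from Proposition~\ref{P:metrically principal types} and the $\lambda$-Omitting Types Property of $\mathcal{L}$,'' and you have simply written out the implicit argument---choose $\delta_j$ so that $\Sigma_j^{\delta_j}$ is not $\lambda$-principal, omit the thickenings via Theorem~\ref{Theorem:GeneralOmittingTypesTheorem}, and use $M\prec_{\cl}\overline{M}$ to transfer omission to the completion. Your treatment of the cardinality bound and the density step are both accurate; the remark about consistency of $\Sigma_j^{\delta_j}$ with $T$ is harmless but not needed for the application of the omitting types theorem.
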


\begin{proof}
Immediate from Proposition~\ref{P:metrically principal types} an the $\lambda$-Omitting Types Property of $\mathcal{L}$.
\end{proof}

An $S$-theory $T$ is \emph{complete} if for every $S$-sentence $\phi$ and every $r\in\mathbb{Q}\cap(0,1)$, either $\phi\le r$ or $\phi\ge r$ is in $T$. Note that this yields a concept of complete type, since, in our context, types are theories.

 If $T$ is a complete $S$-theory, we can define a topology on the set of all complete $\mathbf S$-types of $T$, as follows. If $p(\bar x),q(\bar x)$ are such types, where $\bar x=x_1\dots, x_n$, we define $d(p,q)$ as the infimum the set of real numbers $r$ such that there exist a model $M$ of $T$ and tuples $\bar{a},\bar{b}\in M$ satisfying $\max_{k\le n} d(a_k, b_k)\le r$. A compactness argument shows that $d$ is a metric. Note that $d(p,q)\le \delta$ if and only if $p^\delta\subseteq q$, where $p^\delta$ is defined as above. Hence, if $p(\bar x)$ is metrically principal, $\delta>0$, and $M$ is an $\mathbf{S}$-structure such that $M\models_{\cl} T$, then there exists $q(\bar x)$ such that $d(p,q)\le \delta$ and $q$ is realized in $M$. We use this to prove the following observation, due Henson:

\begin{proposition}
\label{P:principal types realized}
If $T$ is a complete $S$-theory and $M$ is a complete $\mathbf S$-structure such that $M\models_{\cl} T$, then every complete $\mathbf S$-type of $T$ that is metrically principal is realized in $M$.
\end{proposition}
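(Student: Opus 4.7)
The plan is to construct, by induction on $n$, a Cauchy sequence $(\bar c_n)_{n<\omega}$ of tuples in $M$ such that $\bar c_n$ realizes $p^{\delta_n}$ for $\delta_n=2^{-n}$. Completeness of $M$ then supplies a limit $\bar c$, and continuity of each formula's interpretation on $M$ yields realization of $p$ at $\bar c$: for each finite conjunction $\sigma$ of formulas from $p$, realization of $p^{\delta_n}$ at $\bar c_n$ furnishes $\bar y_n\in M$ with $d(\bar c_n,\bar y_n)\le\delta_n$ and $\sigma^M[\bar y_n]=1$; hence $\bar y_n\to\bar c$, and continuity of $\sigma^M$ on the bounded space $M^{\ell(\bar x)}$ forces $\sigma^M[\bar c]=1$.

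The base step takes $\bar c_0$ from the observation immediately preceding the proposition, applied to $T$ and $p$. For the inductive step, given $\bar c_n\in M$ realizing $p^{\delta_n}$, pass to the complete expanded theory $T_n:=\operatorname{Th}(M,\bar c_n)$ and form the augmented type
\[
\tilde p_n(\bar y):=p(\bar y)\cup\{\,d(c_{n,k},y_k)\le\delta_n\,\}_k.
\]
Compactness of basic continuous logic (Theorem~\ref{Theorem:CompactnessTheoremForContinuousLogic}), combined with the fact that $\bar c_n$ realizes $p^{\delta_n}$, yields consistency of $\tilde p_n$ with $T_n$. Moreover, if $\Phi_\delta$ is a generator of $p^\delta$ over $T$ (provided by the metric principality of $p$), then a routine triangle-inequality computation shows that $\Phi_\delta\cup\{\,d(c_{n,k},y_k)\le\delta_n-\delta\,\}_k$ generates $\tilde p_n^{2\delta}$ over $T_n$: any $\bar y$ satisfying these is within $\delta_n-\delta$ of $\bar c_n$, and $\delta$-close to a realizer of $\sigma$, hence within $\delta_n$ of it. This shows $\tilde p_n$ is metrically principal over $T_n$. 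Applying the observation to $T_n$ and $\tilde p_n$ with parameter $\delta_{n+1}$ yields $\bar c_{n+1}\in M$ realizing $\tilde p_n^{\delta_{n+1}}$; inspection shows that $\bar c_{n+1}$ realizes $p^{\delta_{n+1}}$ and satisfies $d(\bar c_n,\bar c_{n+1})\le\delta_n+\delta_{n+1}$. Since $\sum_n(\delta_n+\delta_{n+1})<\infty$, the sequence is Cauchy.

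The main obstacle is verifying that $\tilde p_n$ is metrically principal over $T_n$ with the correct quantitative relationship between the $\delta$-parameters: the triangle-inequality step converting a realizer of the shifted generator into an approximate realizer of $\tilde p_n$ doubles the approximation parameter, and consistency of the shifted generator $\Phi_\delta\cup\{d(c_{n,k},y_k)\le\delta_n-\delta\}_k$ with $T_n$ requires the strict slack afforded by $\bar c_n$ realizing $p^{\delta_n}$, which must be witnessed in an elementary extension of $(M,\bar c_n)$ via compactness. Careful bookkeeping handles this, at the cost of only the slightly weaker Cauchy bound $\delta_n+\delta_{n+1}$, which still sums.
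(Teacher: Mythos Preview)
Your overall strategy---build a Cauchy sequence $(\bar c_n)$ in $M$ with $\bar c_n$ realizing $p^{2^{-n}}$, then use completeness of $M$ and continuity of formulas to conclude the limit realizes $p$---is sound and matches the paper's in spirit. The paper, however, organizes the induction differently: it builds an elementary chain $M=M_0\prec M_1\prec\cdots$ together with sequences $\bar a_n\in M$ and $\bar b_n\in M_{n+1}$, where each $\bar b_n$ realizes $p$ \emph{exactly}. The interleaving distance bounds $d(\bar a_n,\bar b_n)\le 2^{-n}$ and $d(\bar a_{n+1},\bar b_n)\le 2^{-(n+1)}$ make both sequences Cauchy with common limit in $M$, and the limit realizes $p$ because the $\bar b_n$ do. This avoids having to re-establish metric principality of any auxiliary type.

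Your inductive step, by contrast, hinges on showing that the augmented type $\tilde p_n=p\cup\{d(c_{n,k},y_k)\le\delta_n\}_k$ is metrically principal over $T_n=\operatorname{Th}(M,\bar c_n)$, and here there is a genuine gap. You propose $\Phi_\delta\cup\{d(c_{n,k},y_k)\le\delta_n-\delta\}_k$ as a generator of $\tilde p_n^{2\delta}$, where $\Phi_\delta$ is a given generator of $p^\delta$ over $T$. The entailment part of the generator condition is fine (your triangle-inequality argument works), but \emph{consistency with $T_n$} is not established. Your justification invokes an elementary extension of $(M,\bar c_n)$ containing some $\bar y$ that realizes $p$ with $d(\bar c_n,\bar y)\le\delta_n$; however, such $\bar y$ need not satisfy $\Phi_\delta$. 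The generator condition only says $T\cup\Phi_\delta$ is consistent and $T\cup\Phi_\delta\models p^\delta$; it does \emph{not} say $p\models\Phi_\delta$, so a realizer of $p$ is no help. Nor does the witness give the tightened bound $\delta_n-\delta$. Without consistency, $\Phi_\delta\cup\{d\le\delta_n-\delta\}$ is not a generator, $\tilde p_n$ is not shown to be metrically principal, and the appeal to the preceding observation to produce $\bar c_{n+1}\in M$ fails.

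The paper's chain-of-extensions device is precisely what sidesteps this: since $\bar b_n$ realizes $p$ itself, every formula that $p$ decides is available, and the next approximant $\bar a_{n+1}\in M$ is obtained by combining compactness with the observation, never requiring a generator to lie in $p$.
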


\begin{proof}
Fix $T$ and $M$ as in the statement of the proposition, and let $p$ be a complete  $\mathbf S$-type of $T$ that is metrically principal. Using compactness and the preceding observations, we find, inductively, a sequence $(q_n)_{n<\omega}$ of complete $\mathbf S$-types for $T$, a chain $M=M_0\prec_{\cl} M_1\prec_{\cl}M_2\prec_{\cl}\dots$ of models of $T$, and  sequences $(\bar a_n)_{n<\omega}$, $(\bar b_n)_{n<\omega}$ such that for every $n<\omega$,
\begin{enumerate}
\item
$q_n\supseteq p^{2^{-n}}$ (so $d(p,q_n)\le 2^{-n}$),
\item
$\bar a_n$ realizes $q_n$ in $M$,
\item
$\bar b_n$ realizes $p$ in $M_{n+1}$,
\item
$d(a_n,b_n)\le 2^{-n}$,
\item
$d(a_{n+1}, b_n)\le 2^{-(n+1)}$.
\end{enumerate}
By (4) and (5) the sequences $(\bar a_n)_{n<\omega}$ and $(\bar b_n)_{n<\omega}$ are Cauchy and asymptotically equivalent (in $\bigcup_{n<\omega} M_n$). Since $M$ is complete, their unique limit is in $M$, by (2). By (3), this limit realizes $p$.
\end{proof}

\begin{remark}
\label{R:omitting types in complete structures}
By Propositions~\ref{P:omitting types in complete structures} and ~\ref{P:principal types realized}, if $T$ is a complete $S$-theory and $p$ is a complete $\mathbf{S}$-type of $T$, then $p$ is metrically principal if and only if $p$ is realized in every complete $\mathbf S$-structure that is a model of $T$. Hence,  the special case of Proposition~\ref{P:omitting types in complete structures} when $\lambda=\omega$  is Henson's omitting types theorem for complete metric structures~\cite[Section 12]{Ben-Yaacov-Berenstein-Henson-Usvyatsov:2008}, \cite[Section 1]{Ben-Yaacov-Usvyatsov:2007}.
\end{remark}

\section{The Main Theorem} \label{Section:TheMainTheorem}

Let $\cl$ denote basic continuous logic. We have proved  (Theorem~\ref{Theorem:GeneralOmittingTypesTheorem}) that $\cl$ has the $\kappa$-Omitting Types Property for every infinite cardinal~$\kappa$. It then follows that continuous logic and {\L}ukasziewicz-Pavelka logic have this property as well. In this section we show that the $\kappa$-Omitting Types Property characterizes $\cl$. Analogous characterizations for  continuous logic and {\L}ukasziewicz-Pavelka logic follow as corollaries.

We prove that any $[0,1]$-valued logic $\mathcal{L}$ for continuous metric structures that extends $\cl$ and has the $\kappa$-Omitting Types Property for some uncountable regular cardinal $\kappa$ is equivalent to $\cl$, as long as only vocabularies of cardinality less than $\kappa$ are considered. We will make some natural assumptions about $\mathcal{L}$, namely, 
\begin{enumerate}\label{L:conditions on the logic L}
\item 
Closure under the basic connectives, and under the existential quantifier (Definition~\ref{D:closed under classical quantifiers}),
\item 
The finite occurrence property (Definition~\ref{D:FiniteOccurrenceProperty}),
\item
Relativization to definable families of predicates (Definition~\ref{D:logic permits relativization}),
\item
Every structure is equivalent in $\mathcal L$ to its metric completion  (see Corollary~\ref{C:StructuresAreElementarySubstructuresOfCompletions}).
\end{enumerate}

The following is the main result of the paper:

\begin{theorem}[Main Theorem] \label{Theorem:TheMainTheorem}
Let $\mathcal{L}$ be a $[0,1]$-valued logic that satisfies properties \textup{(1)--(5)} above and has the $\kappa$-Omitting Types Property for some uncountable regular cardinal $\kappa$. If $\mathcal L$ extends $\cl$,  then $\mathcal{L}$ is equivalent to  $\cl$  for signatures of cardinality less than $\kappa$.
\end{theorem}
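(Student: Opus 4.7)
The plan is to show, for every signature $\mathbf{S}$ with $|S|<\kappa$, that $\tau_{\cl}(\mathbf{S})=\tau_{\mathcal{L}}(\mathbf{S})$ on $\Str(\mathbf{S})$; this is the meaning of ``$\mathcal{L}$ equivalent to $\cl$'' for such signatures. Since $\mathcal{L}$ extends $\cl$, one inclusion is automatic, and both topologies are regular by Proposition~\ref{P:real-valued logic continuity regularity} (using closure under the basic connectives). Setting $\lambda=|S|+\aleph_0<\kappa$, the weight of $\tau_{\cl}(\mathbf{S})$ is at most $\lambda$. I would then invoke Proposition~\ref{Proposition:ApproximabilityIndistinguishabilityAndCompactnessImpliesReverseApproximability}, which reduces the task to (a)~proving $\tau_{\mathcal{L}}(\mathbf{S})$ is $\lambda$-compact, and (b)~showing $M\equiv_{\cl}N\Rightarrow M\equiv_{\mathcal{L}}N$ for $\mathbf{S}$-structures.

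Step (a) would be extracted from the $\kappa$-OTP as follows. Given a finitely $\mathcal{L}$-satisfiable theory $T$ of cardinality $\leq\lambda$, pass to a Henkin expansion by $\lambda$ new constants (using closure under the existential quantifier, property~(1)) and consider types forbidding these constants from witnessing a failure of $T$; finite satisfiability of $T$ prevents them from being $\kappa$-principal, and the $\kappa$-OTP then produces a model of $T$.

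Step (b) is the main step, to be proved by a back-and-forth implemented via the $\kappa$-OTP. Assume for contradiction $M\equiv_{\cl}N$ but $\phi^{M}+\epsilon\leq\phi^{N}$ for some $\mathcal{L}$-sentence $\phi$ and rational $\epsilon>0$; by the finite occurrence property~(2) the vocabulary of $\phi$ may be taken finite. Expand $S$ by two discrete unary predicates $P,Q$ (using relativization, property~(3)) to play the role of sorts, and adjoin $\lambda$ fresh constants enumerating both. Form an $\mathcal{L}$-theory $T^{*}$ asserting that the relativized $P$- and $Q$-parts satisfy the common complete $\cl$-theory of $M$ and $N$ and that $\phi^{P}\leq\phi^{M}+\epsilon/4<\phi^{N}-\epsilon/4\leq\phi^{Q}$. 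For each finite tuple on one side add a type saying ``no tuple on the opposite side has the same complete $\cl$-type and distance data.'' Since $M\equiv_{\cl}N$ and $\cl$ is compact (Theorem~\ref{Theorem:CompactnessTheoremForContinuousLogic}), every $\cl$-model of $T^{*}$ realizes a matching opposite tuple, so these types fail to be $\kappa$-principal. Apply the $\kappa$-OTP to obtain a model $K$ of $T^{*}$ omitting them all: the $P$- and $Q$-parts then realize the same $\cl$-types with matching distance data. A standard approximate back-and-forth, together with the downward L\"owenheim--Skolem theorem (Theorem~\ref{Theorem:DownwardLowenheimSkolemTheoremForContinuousLogic}) and property~(4), produces an isometric isomorphism between the completions of the two sorts. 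The Isomorphism Property of $\mathcal{L}$ then forces the relativized $\phi$-values to agree, contradicting the strict separation.

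The main obstacle will be the back-and-forth in step (b). Since $\cl$-equivalent metric structures need not be isometrically isomorphic in general, the types must encode both the $\cl$-type of each tuple and its full distance data, and the final isomorphism is assembled as the completion of a limit of approximate partial maps. Verifying that the omitted types are non-$\kappa$-principal \emph{in the $\mathcal{L}$-sense} (not merely the $\cl$-sense) requires care and rests on compactness of $\cl$ together with closure of $\mathcal{L}$ under the basic connectives, the existential quantifier and relativization to discrete predicates (properties~(1) and~(3)), which make ``$\cl$-type'' internally expressible within $\mathcal{L}$-formulas of admissible cardinality.
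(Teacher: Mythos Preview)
Your overall architecture matches the paper: reduce via Proposition~\ref{Proposition:ApproximabilityIndistinguishabilityAndCompactnessImpliesReverseApproximability} to (a) $\lambda$-compactness of $\mathcal{L}$ for $\lambda<\kappa$ and (b) the implication $\equiv_{\cl}\Rightarrow\equiv_{\mathcal{L}}$, and attack both using the $\kappa$-OTP. But both steps, as you have them, contain real gaps.

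\textbf{Step (a) is the main gap.} The $\kappa$-OTP presupposes a theory that already has a model; it does not manufacture consistency from finite consistency. Your sentence ``consider types forbidding these constants from witnessing a failure of $T$'' does not name a consistent base theory to which OTP is applied, nor does it explain why such types would be non-$\kappa$-principal. The paper's argument is a genuinely different idea you are missing: one proceeds by induction on $\lambda<\kappa$. Given $T=\{\varphi_i\}_{i<\lambda}$ finitely satisfiable, the inductive hypothesis provides models $M_j\models\{\varphi_i\}_{i<j}$ for each $j<\lambda$; these are assembled (using relativization, property~(3)) into a single structure carrying a discrete linear order $(P,\triangleleft)$ indexed by the $c_j$, together with a theory $T^+$ saying ``the slice above $c_i$ satisfies $\varphi_i$''. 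A separate lemma then uses the $\kappa$-OTP to produce a model of $T^+$ in which $(P,\triangleleft)$ has cofinality $\kappa$; any point above all $c_i$ gives, by relativization, a model of $T$. No Henkin construction is involved.

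\textbf{Step (b) differs from the paper and has a cardinality error.} The paper adjoins $\kappa$ (not $\lambda$) constants $c_i^0,c_i^1$ and builds the $\cl$-type matching into the \emph{theory} via the implications $\psi^0(c_{\mathbf i}^0)\to_L(\psi^1(c_{\mathbf i}^1)\ge s)$; the only types to omit are the density types $\Sigma_\epsilon(x)$ asserting $x$ is $\epsilon$-far from every $c_i^0$ or every $c_j^1$. Non-$\kappa$-principality is then a clean unused-constant argument: any purported generator $\Phi$ of size $<\kappa$ misses some index $j_1$, and one reinterprets $c_{j_1}^0$ (resp.\ $c_{j_2}^1$) as the witness itself. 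With only $\lambda<\kappa$ constants, as in your sketch, a generator of size $\lambda$ can mention them all and this argument collapses. Finally, no back-and-forth is needed: once the constants are dense and their $\cl$-types match by construction, the map $c_i^0\mapsto c_i^1$ is already an isometric isomorphism of dense substructures, and property~(4) plus the Isomorphism Property give the contradiction directly.
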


Fix a cardinal $\kappa$ as given by the statement of Theorem~\ref{Theorem:TheMainTheorem}, let $\mathbf S$ be a signature of cardinality less than $\kappa$, and let us view $S$-sentences a $[0,1]$-valued functions on the class of $S$-structures, by identifying an $S$-sentence $\phi$ with the function $M\mapsto \phi^M$. The equivalence stated by Theorem~\ref{Theorem:TheMainTheorem} means that the logical topologies $\tau_{\mathcal L}$ and $\tau_{\cl}$ on the class of $\mathbf{S}$-structures coincide, so, for every $S$-sentence $\phi$ of $\mathcal{L}$, the class $\Mod_{\mathbf S}(\phi)$ is $\tau_{\cl}$-closed. This implies that every $S$-sentence $\mathcal L$ (viewed as a $[0,1]$ valued function) is  $\tau_{\cl}$-continuous, because if $\phi$ is such a sentence and $[r,s]$ is a subinterval of $[0,1]$ with rational endpoints, then $\phi^{-1}[r,s]=\Mod_{\mathbf S}(\phi\ge r)\cap \Mod_{\mathbf S}(\phi\le s)$. Since  $\cl$ is compact, by the Stone-Weierstrass Theorem for lattices, every sentence of $\mathcal L$ can be approximated,  uniformly over the class of $\mathbf S$-structures, by sentences of $\cl$. 

The rest of this section is devoted to the proof of the Main Theorem. The strategy of the proof  is to show that if $\mathcal{L}$ extends $\cl$ strictly, then there exist structures that are metrically isomorphic, but nonequivalent in $\mathcal{L}$; this contradicts the Isomorphism Property of $\mathcal L$ (see Definition~\ref{Definition:LogicalSystem}).

The section is divided into two parts. In the first one we use the $\kappa$-Omitting Types Property of the logic  $\mathcal L$ to prove that $\mathcal{L}$ is $\lambda$-compact for every  $\lambda < \kappa$ (Proposition~\ref{Proposition:LambdaCompactness}), and the second part of the section is devoted to the proof of the Main Theorem.

\subsection{Obtaining compactness from the Omitting Types Property} 
\label{Subsection:UsingOmittingTypesToObtainCompactness}

Here, $\mathcal{L}$ denotes a  $[0,1]$-valued logic that satisfies the hypotheses of the Main Theorem (Theorem~\ref{Theorem:TheMainTheorem}) and $\kappa$ denotes an uncountable regular cardinal such that $\mathcal{L}$ has the $\kappa$-Omitting Types Property.  

Recall that if $M$ is an $S$-structure, an $[0,1]$-valued predicate $R^{M}(\bar{x})$ is discrete if $R^{M}$ only takes on values in $\{0,1\}$.  As we observed in Definition~\ref{Definition:DiscretePredicate}, if $R$ is a predicate in $S$, then the interpretation $R^{M}$ is discrete if and only if
\[
	M \models_{\mathcal{L}} \forall \bar{x} \, \Discrete \big( R(\bar{x}) \big),
\]
where $\Discrete \big( R(\bar{x}) \big)$ is an abbreviation of the  sentence $R(\bar{x}) \vee \neg R(\bar{x})$.

\begin{definition} \label{D:DescriptionOfDiscreteLinearOrdering}
Let $M$ be a structure, $P$ a new monadic predicate symbol, and $\triangleleft$ a new binary predicate symbol.  We will say that $(P^{M},\triangleleft^{M})$ is a \emph{discrete linear ordering} if $M \models_{\mathcal{L}} \theta$, where $\theta$ is the conjunction of the following sentences:
\begin{itemize}
\item $\forall x \, \Discrete \big( P(x) \big)$

\item $\forall x,y \, \big( \neg P(x) \lor \neg P(y) \lor \Discrete(x \triangleleft y)\big)$

\item $\forall x,y \, \big( \neg P(x) \lor \neg P(y) \lor \Discrete(d(x, y)\big)$

\item $\forall x \, \big[ \neg P(x) \vee \neg (x \triangleleft x) \big]$

\item $\forall x,y \, \big[ \neg \big( P(x) \wedge P(y) \wedge (x \triangleleft y) \big) \vee \neg (y \triangleleft x) \big]$

\item $\forall x,y,z \, \big[ \neg \big( P(x) \wedge P(y) \wedge (x \triangleleft y) \wedge (y \triangleleft z) \big) \vee (x \triangleleft z) \big]$

\item $\forall x,y \, \big[ \neg \big( P(x) \wedge P(y) \big) \vee \big( (x \triangleleft y) \vee (y \triangleleft x) \vee\neg d (x , y) \big]$.
\end{itemize}
\end{definition}

Notice that if $M$ be a structure, $P$ is monadic predicate symbol, and $(P^{M},\triangleleft^{M})$ is a discrete linear ordering, then $(P^{M},\triangleleft^{M})$ is a linear ordering in the usual sense.

\begin{lemma} \label{Lemma:DiscreteLinearOrderingExtension}
%Let $\kappa$ be a regular cardinal such that $\mathcal{L}$ has the $\kappa$-Omitting Types Property, and let 
Let  $T$ be a consistent $\mathcal{L}$-theory of cardinality $\leq \kappa$.  If $T$ has a model $M$ such that $(P^{M}, \triangleleft^{M})$ is a discrete linear ordering with no right endpoint, then $T$ has a model $N$ such that $(P^{N}, \triangleleft^{N})$ is a discrete linear ordering of cofinality $\kappa$.
\end{lemma}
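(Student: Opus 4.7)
The plan is to add $\kappa$ fresh constants $(c_\alpha)_{\alpha<\kappa}$ to the signature, expand $T$ to the theory
\[
T^+ \;:=\; T \;\cup\; \{P(c_\alpha) : \alpha < \kappa\} \;\cup\; \{c_\alpha \triangleleft c_\beta : \alpha < \beta < \kappa\},
\]
and apply the $\kappa$-Omitting Types Property of $\mathcal{L}$ to eliminate the single type
\[
\Sigma(x) \;:=\; \{P(x)\} \cup \{c_\alpha \triangleleft x : \alpha < \kappa\},
\]
which asserts that $x\in P$ lies strictly above every $c_\alpha$. If $N$ is an $\mathcal{L}$-model of $T^+$ that omits $\Sigma$, then $(c_\alpha^N)_{\alpha<\kappa}$ is a strictly increasing $\kappa$-chain in $P^N$; the omission of $\Sigma$, combined with the discrete linear ordering axioms of Definition~\ref{D:DescriptionOfDiscreteLinearOrdering}, forces every $y\in P^N$ to satisfy $y\trianglelefteq c_\alpha^N$ for some $\alpha<\kappa$, so the chain is cofinal in $P^N$. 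Regularity of $\kappa$ then upgrades ``cofinality $\leq \kappa$'' to ``cofinality $= \kappa$'', which is the conclusion of the lemma.

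To apply the $\kappa$-OTP I must verify that $T^+$ is $\mathcal{L}$-consistent of cardinality $\leq \kappa$ and that $\Sigma$ is not $\kappa$-principal over $T^+$. Cardinality is immediate. For non-principality, suppose some $\Phi(x)$ of cardinality $<\kappa$ generated $\Sigma$ over $T^+$. Let $J\subseteq\kappa$ be the ${<}\kappa$-sized set of indices of the $c_\alpha$'s occurring in $\Phi$, and fix $\alpha^* > \sup J$; regularity of $\kappa$ ensures $\alpha^*<\kappa$. In a model $M^\dagger$ of $T^+\cup\Phi$ I would take a realizer $a$ of $\Phi$ and produce a new model by keeping the interpretation of $c_\alpha$ for $\alpha<\alpha^*$ intact and reinterpreting the tail $(c_\alpha)_{\alpha\geq\alpha^*}$ as a strictly increasing chain in $P$ whose first term strictly dominates $a$. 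Because ``$P$ has no right endpoint'' is a $\cl$-sentence satisfied by $M$ and may therefore WLOG be placed in $T$, and because $\cl$ is compact (Theorem~\ref{Theorem:CompactnessTheoremForContinuousLogic}), such a tail is available in an appropriate $\cl$-elementary extension of $M^\dagger$. The resulting model satisfies $T^+$, and $a$ still realizes $\Phi$ (since $\Phi$ uses only constants indexed by $J$), but violates $c_{\alpha^*}\triangleleft x$, contradicting the assumed generation.

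The more delicate step is the $\mathcal{L}$-consistency of $T^+$ itself, and this is the step I expect to be the main obstacle: the theory $T$ may genuinely use $\mathcal{L}$-sentences outside $\cl$, and $\mathcal{L}$-compactness is precisely what we are trying to establish in this subsection. Finite $\mathcal{L}$-satisfiability of $T^+$ is clear, since any finite subset mentions only finitely many $c_\alpha$'s and can be realized inside $M$ by a finite ascending chain in $P^M$, supplied by the no-right-endpoint hypothesis. I would upgrade finite to full satisfiability by applying $\cl$-compactness to the $\cl$-elementary diagram of $M$ enriched with the $\cl$-sentences declaring $(c_\alpha)_{\alpha<\kappa}$ a chain in $P$, producing a $\cl$-elementary extension $M'$ of $M$ carrying the required $\kappa$-chain; the central claim, namely that $M'$ is still an $\mathcal{L}$-model of $T$, must then be pushed through using the architectural hypotheses on $\mathcal{L}$ listed in the Main Theorem (closure under the basic connectives and the existential quantifier, the finite occurrence property, and the completion-invariance hypothesis). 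Once the $\mathcal{L}$-consistency of $T^+$ is secured, the $\kappa$-OTP supplies the desired model $N$, and the cofinality calculation of the first paragraph finishes the proof.
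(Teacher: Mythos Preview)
Your overall plan (add $\kappa$ constants, apply the $\kappa$-OTP to the ``no element above all constants'' type) is exactly the paper's strategy, but your execution has a genuine gap at the consistency step, and the gap is not fixable with the tools available at this point in the argument.

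You require the $c_\alpha$ to form a \emph{strictly} increasing chain, so a model of $T^+$ must contain a genuine $\kappa$-chain in $P$. Since $M$ itself need not contain one, you propose to manufacture a $\cl$-elementary extension $M'$ of $M$ carrying such a chain (via $\cl$-compactness) and then argue that $M'$ is still an $\mathcal{L}$-model of $T$ ``using the architectural hypotheses on $\mathcal{L}$.'' But none of those hypotheses (closure under the basic connectives and $\exists$, finite occurrence, relativization, completion-invariance) yields that $\cl$-elementary extensions preserve $\mathcal{L}$-truth. That implication is essentially $\equiv_{\cl}\Rightarrow\equiv_{\mathcal{L}}$, which is precisely the content of Proposition~\ref{P:EquivalenceToContinuousLogic} and the goal of the Main Theorem; invoking it here is circular. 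The same circularity reappears in your non-principality argument, where you again pass to a $\cl$-elementary extension to reinterpret the tail of constants above $a$.

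The paper sidesteps this entirely with a small trick you are missing: it uses the \emph{weak} ordering $(c_i\triangleleft c_j)\lor\neg d(c_i,c_j)$ in place of your strict $c_\alpha\triangleleft c_\beta$. Consistency of the expanded theory is then immediate inside the given model $M$: interpret every $c_i$ as a single fixed element of $P^M$. No compactness, no elementary extensions, no appeal to properties of $\mathcal{L}$ beyond what is already assumed. The non-principality argument likewise stays inside the model at hand: given a putative generator $\Phi$ realized by $a\in P^M$, one simply picks $b\in P^M$ with $a\triangleleft b$ (from the no-right-endpoint axiom, which is included in $T'$) and reinterprets a constant $c_j$ not occurring in $\Phi$ as $b$; the weak inequalities among the constants are easy to restore by adjusting the other unused constants. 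Once you switch to weak inequalities the rest of your outline goes through, and the cofinality-$\kappa$ conclusion follows from regularity exactly as you describe.
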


\begin{proof}
Let $(c_{i})_{i < \kappa}$ be a family of new  constants.  Let $\theta$ be the sentence given in Definition~\ref{D:DescriptionOfDiscreteLinearOrdering}.  Define
\begin{align*}
T'=T \cup \{ \theta\} &\cup\{\, \forall x \,( \neg P(x) \vee \exists y \, ( P(y) \wedge (x \triangleleft y)\,)\,)\,\}\\ 
&\cup\{P(c_{i})\}_{i < \kappa} \cup \{(c_{i} \triangleleft c_{j})\lor \neg d(c_{i},c_{j})\}_{i < j < \kappa}.
\end{align*}
We first claim that $T'$ is satisfiable.  To see this, notice that if $M$ is a model of $T$ such that $(P^{M}, \triangleleft^{M})$ is a discrete linear ordering with no right endpoint, $a \in P^{M}$, and $c_{i}^{M} = a$ for all $i < \kappa$, then $(M,c_{i}^M)_{i < \kappa} \models_{\mathcal{L}} T'$.

Next, we claim that the type
\[
	\Sigma(x) = \{P(x)\} \cup \{\,c_{i} \triangleleft x\lor \neg d(c_i,x)\,\}_{i < \kappa}
\]
is not $\kappa$-principal over $T'$.  Suppose the contrary, and assume that $\Phi(x)$ generates of $\Sigma(x)$ over $T$.   Take a structure $M$ and an element $a$ of $M$ such that $M \models_{\mathcal{L}} T' \cup\Phi[a]$.  If $M \models_{\mathcal{L}} \neg P[a]$, then $M \not\models_{\mathcal{L}} \Sigma[a]$,  so $\Phi(x)$ cannot generate $\Sigma(x)$ over $T$.  Otherwise, since $|\Phi(x)|<\kappa$ and $\mathcal{L}$ has the finite occurrence property, there is $j < \kappa$ such that $c_{j}$ does not occur in $\Phi(x)$. Since $M \models_{\mathcal{L}} \theta' \wedge P[a]$, we can find an interpretation $c_{j}^{M}$ of $c_{j}$ in $M$ such that $a \triangleleft c_{j}^{M}$.  But then $M \not\models_{\mathcal{L}} \Sigma[a]$, so $\Phi(x)$ cannot generate $\Sigma(x)$ over $T$.

Thus $\Sigma(x)$ is not $\kappa$-principal over $T'$.  Since $\mathcal{L}$ has the $\kappa$-Omitting Types Property, there is a model $N$ of $T'$ that omits $\Sigma(x)$.  This means that no element of $P^{N}$ is an upper bound of $\{c_{i}^{N}\}_{i < \kappa}$, i.e., the sequence $(c_{i}^{N})_{i < \kappa}$ is cofinal in $P^{N}$. The result then  follows since $\kappa$ is regular.
\end{proof}

\begin{proposition} \label{Proposition:LambdaCompactness}
The logic $\mathcal{L}$ is $\lambda$-compact for every  $\lambda < \kappa$.
\end{proposition}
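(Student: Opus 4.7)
The plan is to proceed by transfinite induction on the infinite cardinal $\lambda<\kappa$, with Lemma~\ref{Lemma:DiscreteLinearOrderingExtension} doing the decisive work at each stage. Fix $\lambda<\kappa$ and assume $\mathcal{L}$ is $\mu$-compact for every infinite $\mu<\lambda$ (vacuous when $\lambda=\aleph_0$). Let $T=\{\varphi_\alpha:\alpha<\lambda\}$ be an $S$-theory each of whose finite subsets is satisfied by an $\mathbf{S}$-structure; I want to produce a single $\mathbf{S}$-structure satisfying all of $T$.

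First I would expand $\mathbf{S}$ to $\mathbf{S}^{+}$ by adjoining a monadic predicate $P$, two binary predicates $\triangleleft$ and $R$, and fresh constants $c_\alpha$ for $\alpha<\lambda$, and let $T^{+}$ be the $(S\cup\{P,\triangleleft,R\}\cup\{c_\alpha\}_{\alpha<\lambda})$-theory consisting of: the sentence $\theta$ of Definition~\ref{D:DescriptionOfDiscreteLinearOrdering} together with an axiom saying $P$ has no right endpoint; the sentences $P(c_\alpha)$ for $\alpha<\lambda$ and $c_\alpha\triangleleft c_\beta$ for $\alpha<\beta<\lambda$; axioms forcing $\{x\mid R(x,y)\}$ to be closed under the $S$-operations for every $y\in P$; and, for each $\alpha<\lambda$, the sentence
\[
\forall y\,\bigl(\,(P(y)\wedge c_\alpha\triangleleft y)\to \varphi_\alpha^{\{x\mid R(x,y)\}}(y)\,\bigr),
\]
where $\varphi_\alpha^{\{x\mid R(x,y)\}}$ is the relativization to the definable family $\{x\mid R(x,y)\}$ guaranteed by hypothesis~(3) of the Main Theorem. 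Observe that $|T^{+}|\le\lambda<\kappa$.

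Next I would construct an explicit model $N_0\models T^{+}$ by taking $P^{N_0}$ to be the ordinal $\lambda$ (a limit ordinal, hence without a right endpoint) endowed with its usual order and the discrete metric, interpreting $c_\alpha$ as $\alpha$, and, for each $\beta<\lambda$, interpreting $R(\cdot,\beta)$ as an isomorphic copy of a model of the subtheory $\{\varphi_\alpha:\alpha<\beta\}$. This subtheory has cardinality $|\beta|<\lambda$ and is finitely satisfiable as a subtheory of $T$, so it is satisfiable either because $\beta$ is finite (by the hypothesis on $T$) or by the inductive hypothesis ($|\beta|$-compactness). The disjoint fibers $\{R(\cdot,\beta)\}_{\beta<\lambda}$ and the set $P^{N_0}$ are packaged into a single $\mathbf{S}^{+}$-structure, with operations defined arbitrarily outside the fibers and with uniform continuity moduli for the new symbols chosen compatibly with $\mathbf{S}$ (this is where hypothesis~(5) and the remarks about completions are invoked if needed).

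Since $|T^{+}|\le\kappa$ and $N_0$ witnesses a model of $T^{+}$ in which $(P,\triangleleft)$ is a discrete linear order with no right endpoint, Lemma~\ref{Lemma:DiscreteLinearOrderingExtension} produces a model $N$ of $T^{+}$ in which $(P^{N},\triangleleft^{N})$ has cofinality $\kappa$. Because $(c_\alpha^{N})_{\alpha<\lambda}$ is a $\lambda$-sequence in $P^{N}$ and $\lambda<\kappa$, it is not cofinal, so there exists $y^{*}\in P^{N}$ with $c_\alpha^{N}\triangleleft y^{*}$ for every $\alpha<\lambda$. The relativization axioms then force $R(\cdot,y^{*})\models_{\mathcal{L}}\varphi_\alpha$ for each $\alpha<\lambda$, and the closure axioms make $R(\cdot,y^{*})$ into a genuine $\mathbf{S}$-substructure of $N$; thus $R(\cdot,y^{*})$ is the desired $\mathbf{S}$-model of $T$. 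The main technical obstacle will be the bookkeeping needed to assemble the auxiliary substructures $R(\cdot,\beta)$ into one $\mathbf{S}^{+}$-structure $N_0$ while respecting the uniform continuity moduli of $\mathbf{S}$ and ensuring that the relativization machinery of Definition~\ref{D:logic permits relativization} delivers the intended semantics inside $N$.
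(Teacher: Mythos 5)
Your proposal is correct and follows essentially the same route as the paper's proof: induction on $\lambda$, an auxiliary theory in which a discrete linear order $(P,\triangleleft)$ indexes fibers of $R$ that relativize the $\varphi_\alpha$, an explicit model assembled from models of the initial segments $\{\varphi_\alpha\}_{\alpha<\beta}$ (available by the inductive hypothesis or finite satisfiability), and Lemma~\ref{Lemma:DiscreteLinearOrderingExtension} to force cofinality $\kappa$ so that some element of $P$ lies above all the $c_\alpha$ and its fiber models $T$. The differences are cosmetic: you add explicit ordering, no-endpoint, and closure axioms that the paper leaves implicit, while the paper additionally restricts to the subvocabulary $S_0$ of symbols occurring in $T$ (so $|S_0|\le\lambda$ by finite occurrence) and encodes the uniform continuity moduli as sentences of $T$ so that the final fiber is automatically an $\mathbf{S}$-structure --- the bookkeeping you flag at the end.
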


\begin{proof}
We prove the proposition by induction on all $\lambda < \kappa$.  Fix $\lambda < \kappa$ and suppose $\mathcal{L}$ is $\mu$-compact for every $\mu < \lambda$.  Let $\mathbf{S}=(S,\mathcal{U})$ be a signature, and let $T = \{\varphi_{i}\}_{i < \lambda}$ be an $\mathcal{L}$-theory such that every finite subset of $T$ is satisfied by an $\mathbf{S}$-structure. We wish to show that $T$is satisfied by an $\mathbf{S}$-structure. 

Let $\mathbf{S_0}$ be the signature $(S_0,\mathcal{U}_0)$, where $S_0$ is the subvocabulary of $S$ formed by the symbols of $S$ that occur in $T$ and $\mathcal{U}_0$ is the restriction of $\mathcal{U}$ to $S_0$.  By the Reduct Property of logics (Definition~\ref{Definition:LogicalSystem}), it suffices to show that $T$is satisfied by an $\mathbf{S}_0$-structure.  By the assumption that $\mathcal{L}$ has the finite occurrence property, we have $|S_0|\le\lambda$, so without loss of generality we can assume that the uniform continuity moduli specified by $\mathcal{U}_0$ are made explicit by sentences in $T$, and hence every $S_0$-structure that satisfies $T$ is bound to be $\mathbf{S}_0$-structure. 

Let $S^+$ be a vocabulary that results from adding to $S$ a unary predicate symbol $P$, two new binary predicate symbols, $R$ and $\triangleleft$, and a family $(c_{i})_{i < \lambda}$ of new constant symbols.  

At this point we invoke the assumption  (given on page~\pageref{L:conditions on the logic L}) that the logic $\mathcal L$ permits relativization to definable families of predicates: for each $S$-sentence $\phi$ and each $i<\lambda$, let $\phi_i^{\{y\mid R(x,y)\}}(x)$ be a relativization of $\phi_i$ to  $\{y\mid R(x,y)\}$ (see Definition~\ref{D:logic permits relativization}).
Define an $S^+$-theory $T^+$ by letting
\begin{align*}
	T^{+} = \{\theta\}&\cup\{P(c_{i})\}_{i < \lambda} \\
	&\cup\big\{\, \forall x \, \big(  \forall y\, ( R(x,y)\lor\neg R(x,y)) \land (\neg (c_{i} \triangleleft x) \vee \varphi_{i}^{\{y\mid R(x,y)\}}(x)) \big) \,\big\}_{i < \lambda},
\end{align*}
where $\theta$ is as in  Definition~\ref{D:DescriptionOfDiscreteLinearOrdering}. We first claim that $T^{+}$ has a model $M$ such that $(P^{M},\triangleleft^{M})$ is a discrete linear ordering with no right endpoint.  By the induction hypothesis, for each $j < \lambda$, the theory $\{\varphi_{i}\}_{i < j}$ has a model $M_{j}$.  Let $M$ be the $S^{+}$-structure defined as follows:
\begin{itemize}
\item
We have $c_i^M\neq c_j^M$ if $i<j<\lambda$, and the universe of $M$ is the disjoint union of $\bigsqcup_{i<\lambda}M_i$  and $\{c^M_{i}\}_{i < \lambda}$.
\item
The distance between pairs of elements of $M$ in the same $M_i$ is as given by the metric of $M_i$, and between pairs of elements of $M$ not in the same $M_i$ it is 1.
\item
If $Q$ is a $n$-ary predicate symbol of $S$, the interpretation $Q^M$ is  $\bigsqcup_{i<\lambda}T^{M_i}$ in $\bigsqcup_{i<\lambda}M_i^n$ and 0 in $M^n\setminus \bigsqcup_{i<\lambda}M_i^n$.
\item
If $f$ is a $n$-ary operation symbol of $S$, and $\bar a \in M^n$, then $f^M(\bar a)$ is  $f^{M_i}(\bar a)$ if $a\in M_i^n$ and $c_0$ if $\bar a\in M^n\setminus \bigsqcup_{i<\lambda}M_i^n$.
\item 
$P^{M}$ is the characteristic function of $\{c^M_{i}\}_{i < \lambda}$.
\item 
$\triangleleft^M$ is characteristic function of $\{\,(c_i^M,c_j^M)\mid i<j\,\}$.
\item
$R^M$ is the characteristic function of $\bigcup_{i<\lambda}\{c_i\}\times M_i$.
\end{itemize}

By the renaming property of $\mathcal L$ (see Definition~\ref{Definition:LogicalSystem}), $M$ is a model of $T^{+}$. Note that $(P^{M},\triangleleft^{M})$ is a discrete linear ordering with no right endpoint. Thus, by Lemma~\ref{Lemma:DiscreteLinearOrderingExtension}, there is a model $N$ of $T^{+}$ such that $(P^{N},\triangleleft^{N})$ is a discrete linear ordering of cofinality $\kappa$.  Since $\lambda < \kappa$, there is $a \in N$ such that $c_{i}^N \trianglelefteq a^N$ for every $i < \lambda$.  Thus, $N \upharpoonright \{\,b\mid N\models_{\mathcal L} R[a,b]\,\}$ is a model of $T$.
\end{proof}

\subsection{Proof of the Main Theorem}
 \label{TheoremProofSection}
Recall that  $\mathcal{L}$ denotes a  $[0,1]$-valued logic that satisfies the hypotheses of the Main Theorem (Theorem~\ref{Theorem:TheMainTheorem}) and $\kappa$ denotes an uncountable regular cardinal such that $\mathcal{L}$ has the $\kappa$-Omitting Types Property.  

Since, by  Proposition~\ref{Proposition:LambdaCompactness},  $\mathcal L$ is $\lambda$-compact for every $\lambda<\kappa$, Proposition~\ref{Proposition:ApproximabilityIndistinguishabilityAndCompactnessImpliesReverseApproximability} provides the following result:

\begin{proposition} \label{P:EquivalenceToContinuousLogic}
Let $\mathbf S$ be a signature with $|S|< \kappa$.  If  $\equiv_{\cl} \To \equiv_{\mathcal{L}}$ for $\mathbf S$-structures, then $\mathcal{L}$ is equivalent to $\cl$ for $\mathbf S$-structures. \hfill $\qed$
\end{proposition}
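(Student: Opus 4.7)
The plan is to recognize this as an immediate application of Proposition~\ref{Proposition:ApproximabilityIndistinguishabilityAndCompactnessImpliesReverseApproximability} with $\tau = \tau_{\cl}(\mathbf{S})$ and $\tau' = \tau_{\mathcal{L}}(\mathbf{S})$ on the class $\Str_{\mathcal{L}}(\mathbf{S}) = \Str_{\cl}(\mathbf{S})$; once the hypotheses of that proposition are verified, the conclusion $\tau_{\cl}(\mathbf{S}) = \tau_{\mathcal{L}}(\mathbf{S})$ is exactly the equivalence of the two logics on $\mathbf{S}$-structures.

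First I would check the structural hypotheses. Regularity of both topologies is Proposition~\ref{P:real-valued logic continuity regularity}-\ref{I:real-valued logic regularity}, applied to $\cl$ and to $\mathcal{L}$ (both are closed under the basic connectives). The inclusion $\tau_{\cl}(\mathbf{S}) \subseteq \tau_{\mathcal{L}}(\mathbf{S})$ follows from the assumption that $\mathcal{L}$ extends $\cl$ (Definition~\ref{Definition:ComparingLogicsAndEquivalentLogics}). The hypothesis $\overset{\tau_{\cl}(\mathbf{S})}{\equiv} \To \overset{\tau_{\mathcal{L}}(\mathbf{S})}{\equiv}$ is just a restatement of $\equiv_{\cl} \To \equiv_{\mathcal{L}}$.

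Next I would pin down the weight of $\tau_{\cl}(\mathbf{S})$. A base of closed classes is given by $\{\Mod_{\mathbf{S}}(\varphi) : \varphi \in \Sent_{\cl}(S)\}$, and complementing, a subbase for the open classes is given by the classes $\Mod_{\mathbf{S}}(\varphi \ge r)^{c}$ with $\varphi \in \Sent_{\cl}(S)$ and $r \in \mathbb{Q} \cap (0,1)$. Since $\cl$-formulas are built inductively from the (countably many) connectives and the existential quantifier over $S$-terms, one has $|\Sent_{\cl}(S)| \le \max(|S|, \aleph_0)$. Setting $\lambda = \max(|S|, \aleph_0)$, we get $\lambda < \kappa$ because $|S| < \kappa$ and $\kappa$ is uncountable, and the weight of $\tau_{\cl}(\mathbf{S})$ is at most~$\lambda$.

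Finally, $\mathcal{L}$ is $\lambda$-compact by Proposition~\ref{Proposition:LambdaCompactness}, so $(\Str_{\mathcal{L}}(\mathbf{S}), \tau_{\mathcal{L}}(\mathbf{S}))$ is $\lambda$-compact by Proposition~\ref{Proposition:EquivalenceOfLogicalAndTopologicalCompactness}. All hypotheses of Proposition~\ref{Proposition:ApproximabilityIndistinguishabilityAndCompactnessImpliesReverseApproximability} are now in place, so $\tau_{\mathcal{L}}(\mathbf{S}) = \tau_{\cl}(\mathbf{S})$, which is the desired equivalence. There is no real obstacle here: the content of the proposition has been entirely absorbed into the earlier topological lemma and into the $\lambda$-compactness statement obtained from the Omitting Types Property; the only thing to verify is that the weight bound is strictly below~$\kappa$, which is immediate from the finitary syntax of $\cl$ and the uncountability of~$\kappa$.
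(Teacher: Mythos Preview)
Your proposal is correct and follows exactly the approach the paper takes: the proposition is stated as an immediate consequence of Proposition~\ref{Proposition:ApproximabilityIndistinguishabilityAndCompactnessImpliesReverseApproximability} together with the $\lambda$-compactness of $\mathcal{L}$ for $\lambda<\kappa$ obtained in Proposition~\ref{Proposition:LambdaCompactness}. You have simply spelled out the verification of the hypotheses (regularity, $\tau_{\cl}\subseteq\tau_{\mathcal L}$, the weight bound $\lambda=\max(|S|,\aleph_0)<\kappa$) that the paper leaves implicit.
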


Thus, all that remains in order to prove the Main Theorem is to show that $\equiv_{\cl} \To \equiv_{\mathcal{L}}$ for signatures of cardinality less than~$\kappa$.

\label{P:amalgamation}
If $S$ is a vocabulary and $M_{0},M_{1}$ are $S$-structures, we form the combined structure $[M_{0},M_{1}]$ in the following way.  For each $n$-ary predicate symbol $R$ of $S$ let $R^0,R^1$ be two distinct $n$-ary predicate symbols and for each $n$-ary operation symbol $f$ of $S$ let $f^0,f^1$ be two distinct $n$-ary operation symbols. Let $P_0,P_1$ be new monadic predicates. For $k=0,1$ let
\[
S^k =\{\, R^k \mid R \text{ in } S\,\}\cup\{\,f^k \mid f\text{ in } S\,\}\cup \{P_k\}.
\]
Then $[M_{0},M_{1}]$ is the $(\{S^0\}\cup\{S^1\})$-structure whose universe is the disjoint union of  the universes $M_{0}$ and $M_{1}$ (with the distance between elements of $M_0$ and elements of $M_1$ being~1) and such that
\begin{itemize}
\item
$P_k^M$ is the characteristic function of $M_k$ for $k=0,1$.
\item
For every $n$-ary predicate symbol $R$ of $S$ and every $\bar a\in M^n$,
\[
(R^{k})^{[M_{0},M_{1}]}(\bar a) =
\begin{cases}
 R^{M_{k}}(\bar a), \quad\text{if 
 $\bar a\in M_k^n$}\\
0, \quad\text{otherwise}.
\end{cases}
\]
\item
For every $n$-ary operation symbol $f$ of $S$ and every $\bar a\in M^n$,
\[
(f^{i})^{[M_{0},M_{1}]}(\bar a) =
\begin{cases}
 f^{M_{k}}(\bar a), \quad&\text{if 
 $\bar a\in M_k^n$}\\
a, &\text{otherwise},
\end{cases}
\]
where $a$ is a fixed element of $M$.
\end{itemize}

Now, the assumption that the logic $\mathcal{L}$ permits relativization to discrete predicates (see page~\pageref{L:conditions on the logic L}) allows us to fix for every $S$-sentence $\phi$ an $S^k$-sentence $\varphi^{k}$ such that $[M_{0},M_{1}] \models_{\mathcal{L}} \varphi^{k}$ if and only if $M_{k} \models_{\mathcal{L}} \varphi$.

\begin{proof}[Proof of the Main Theorem]
As observed above, we only have to show $\equiv_{\cl} \To \equiv_{\mathcal{L}}$ for signatures of cardinality less than $\kappa$.  Suppose that this is not the case, and fix a signature $\mathbf S$ of cardinality less than $\kappa$, and $\mathbf S$-structures $M_{0},M_{1}$ such that $M_{0}{\equiv}_{\cl} M_{1}$ and
\begin{equation*}
	M_{0} \models_{\mathcal{L}} \gamma \quad \text{but} \quad M_{1} \models_{\mathcal{L}} \gamma \le r \tag{\dag}
\end{equation*}
for some $\mathcal{L}$-sentence $\gamma$ and some $r\in\mathbb{Q}\cap(0,1)$.   
Our goal is to show that these structures can be taken to be metrically isomorphic; by (\dag), this would contradict property (3) of Definition~\ref{Definition:RealValuedLogic}.

Since $\mathcal{L}$ has the finite occurrence property, we may assume that the vocabulary $S$ is finite. Let $\{c_{i}\}_{i < \kappa}$ be a set of new constants and for $\textbf{i} = i_{1},\dots,i_{n} \in \kappa$, denote $c_{i_{1}},\dots,c_{i_{n}}$ by $c_{\textbf{i}}$.  For each $X \subseteq \kappa$, let $S_{X} = S^{0} \cup S^{1} \cup \{c_{i}^{0},c_{i}^{1}\}_{i \in X}$.  Define an $S_{\kappa}$-theory $T$ as follows:
\begin{multline*}
	T= \{\gamma^{0}\} \cup \{\gamma^{1} \le r\} \cup \{P_0(c_{i}^{0})\}_{i < \kappa} \cup \{P_1(c_{i}^{1})\}_{i < \kappa}\cup \\
	\big\{\, \psi^{0}(c_{\textbf{i}}^{0}) \lukimp (\psi^{1}(c_{\textbf{i}}^{1}) \geq s)  \ \big| \  
	\text{$\psi(\bar{x})$ an $S$-formula of $\cl$,}\\
	\text{$\textbf{i}$ in $\kappa$ with $\ell(\textbf{i})=\ell(\bar{x})$, $s\in\mathbb{Q}\cap(0,1)$} \,\big\}.
\end{multline*}
Our initial goal is to show that $T$ is consistent.  In order to do so, it is sufficient to show that the $S_{1}$-theory
\begin{multline*}
	T_{1} = \{\gamma^{0}\} \cup \{\gamma^{1} \le r\} \cup 
	\{P_0(c_{0}^{0})\} \cup \{P_1(c_{0}^{1})\}\cup \\
	\big\{\, \psi^{0}(c_{0}^{0}) \lukimp (\psi^{1}(c_{0}^{1}) \geq s) \ \big| \ 
	\text{$\psi(x)$ an $S$-formula of $\cl$, $s\in\mathbb{Q}\cap(0,1)$} \,\big\}.
\end{multline*}
is consistent, since any model $N$ of $T_1$ can be expanded to a model of $T$ by defining $(c_{i}^{0})^{N} = (c_{0}^{0})^{N}$ and  $(c_{i}^{1})^{N} = (c_{0}^{1})^{N}$ for $i < \kappa$. 

Now, $T_{1}$ is countable since $S$ is finite, so by the $\omega$-compactness of $\mathcal{L}$, we need only show that every finite subset of $T_{1}$ has a model. 
\begin{claim} 
 Let $\{\psi_{k}(x)\}_{k \leq m}$ be a finite set of $S$-formulas of $\cl$ and let $s\in\mathbb{Q}\cap(0,1)$ be given.  Then for every $a\in M_0$ there is $b\in M_1$ such that
 \[
[M_{0},M_{1}] \models_{\mathcal{L}} 
P_0[a] \land P_1[b]\land
\bigwedge_{k \leq m} ( \psi_{k}^{0}[a] \lukimp (\psi_{k}^{1}[b] \geq s) ).
\]
\end{claim}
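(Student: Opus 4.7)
The plan is to unwind the Łukasiewicz implication so that the desired statement reduces to an approximate existential in $\cl$, and then transfer a suitable $\cl$-sentence from $M_0$ to $M_1$ using $M_0\equiv_\cl M_1$. Write $r_k=\psi_k^{M_0}(a)$. Using $x\lukimp y=\min\{1-x+y,1\}$ and the abbreviation $\phi\ge s$ for $s\lukimp\phi$, a direct computation shows that the conjunct $\psi_k^0[a]\lukimp(\psi_k^1[b]\ge s)$ has truth value $1$ in $[M_0,M_1]$ precisely when $\psi_k^{M_1}(b)\ge r_k-(1-s)$, once we identify $\psi_k^0[a]^{[M_0,M_1]}$ with $\psi_k^{M_0}(a)$ and $\psi_k^1[b]^{[M_0,M_1]}$ with $\psi_k^{M_1}(b)$ via the relativization property. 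The conjuncts $P_0[a]$ and $P_1[b]$ hold automatically for $a\in M_0$ and $b\in M_1$. Hence the task reduces to producing a $b\in M_1$ with $\psi_k^{M_1}(b)\ge r_k-(1-s)$ for every $k\le m$.

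For each $k$, I would pick a rational $q_k$ in the interval $(r_k-(1-s),r_k]\cap[0,1]$, which is nonempty because $1-s>0$ and $r_k\ge 0$. Since $q_k\le r_k$, one has $M_0\models_\cl(\psi_k(a)\ge q_k)$ for every $k$, hence
\[
M_0\models_\cl\exists x\bigwedge_{k\le m}(\psi_k(x)\ge q_k).
\]
This is a $\cl$-sentence (built from the $\psi_k$, Pavelka constants, $\lukimp$, finite conjunction, and $\exists$), so by $M_0\equiv_\cl M_1$ it also holds in $M_1$, which unwinds to
\[
\sup_{b\in M_1}\min_{k\le m}\min\{1-q_k+\psi_k^{M_1}(b),1\}=1.
\]

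To finish, choose $\epsilon>0$ strictly smaller than $\min_k\bigl(q_k-(r_k-(1-s))\bigr)$; this minimum is positive by the choice of the $q_k$. The sup-equals-$1$ assertion then yields a $b\in M_1$ with $\min\{1-q_k+\psi_k^{M_1}(b),1\}>1-\epsilon$ for every $k\le m$, whence $\psi_k^{M_1}(b)>q_k-\epsilon>r_k-(1-s)$, as required.

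The step that I expect to require the most care is the identification $\psi_k^j[\cdot]^{[M_0,M_1]}=\psi_k^{M_j}(\cdot)$, because quantifiers appearing inside $\psi_k^j$ naturally range over the whole universe of $[M_0,M_1]$ rather than over $M_j$. This is handled exactly by the closure of $\mathcal{L}$ under relativization to definable families of predicates, applied to the discrete monadic predicate $P_j$; the paper has already invoked this for sentences, and the same construction carries one free variable through. Once that bookkeeping is done, the back-and-forth step above goes through without further difficulty.
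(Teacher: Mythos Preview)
Your proof is correct and follows essentially the same approach as the paper's: you reduce the implication $\psi_k^0[a]\lukimp(\psi_k^1[b]\ge s)$ to the inequality $\psi_k^{M_1}(b)\ge \psi_k^{M_0}(a)-(1-s)$, choose rationals $q_k$ just below $\psi_k^{M_0}(a)$, transfer the $\cl$-sentence $\exists x\bigwedge_k(\psi_k(x)\ge q_k)$ from $M_0$ to $M_1$ via $M_0\equiv_{\cl}M_1$, and then approximate the supremum to find $b$. The paper does exactly this, using two rationals $r_k<t_k$ in the interval $[(\psi_k[a])^{M_0}-(1-s),(\psi_k[a])^{M_0}]$ where your single $q_k$ (together with $q_k-\epsilon$) plays the same role, and invoking Proposition~\ref{Proposition:InequalitiesAndTheLukasiewiczImplication} in place of your explicit unwinding of $\lukimp$.
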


\medskip\noindent\emph{Proof of the claim.}
Fix $a\in M_{0}$ and $s\in\mathbb{Q}\cap(0,1)$.
For each $k\le m$ choose $r_{k},t_{k}\in\mathbb{Q}\cap(0,1)$ such that 
\[
(\psi _{k}[a])^{M_0}-(1-s)\leq r_{k}<t_{k}\leq (\psi _{k}[a])^{M_0}
\]
and set $\epsilon =\min_{k\le m}t_{k}-r_{k}$. Since $M_{0}\models_{\mathcal{L}}  \exists
x(\bigwedge _{k\le m}(\psi _{k}(x)\geq t_{k}))$, the same sentence holds in $M_{1}$,
thus there is $b$ in $M_{1}$ such that $(\bigwedge _{k\le m}(\psi _{k}[b]\geq
t_{k}))^{M_{1}}\geq 1-\epsilon$. For each $k$,
\[
(\psi _{k}[b]\geq t_{k})^{M_{1}}\geq 1-\epsilon \geq 1-(t_{k}-r_{k}).
\]
Hence, by Proposition~\ref{Proposition:InequalitiesAndTheLukasiewiczImplication},
\[
(\psi _{k}[b])^{M_{1}}\geq t_{k}+1-(t_{k}-r_{k})-1=r_{k}\geq (\psi
_{k}[a])^{M_0}-(1-s)=s+(\psi _{k}[a])^{M_0}-1,
\]
which by the same proposition yields $\ (\psi _{k}[b]\geq s)^{M_{1}}\geq
(\psi _{k}[a])^{M_0}$. Thus, 
\[
[M_{0},M_{1}]\models_{\mathcal{L}}  \bigwedge _{k\le m}(\psi _{k}^{0}[a]\to_{L}(\psi _{k}[b]\geq s)).
\]

By the claim, every  finite subset of $T_{1}$ is satisfied by an expansion by constants of $[M_{0},M_{1}]$; hence, by the $\omega$-compactness of $\mathcal{L}$, the theory $T_{1}$ has a model. As observed above, every such model yields a model of $T$.

Our next (and final) goal is to show that $T$ has a model $[\hat{M}_{0},\hat{M}_{1}$] such that the set $\big\{ (c_{i}^{0})^{\hat{M}_{0}} \big\}_{i < \kappa}$ is dense in $\hat{M}_{0}$, and the set $\big\{ (c_{i}^{1})^{\hat{M}_{1}} \big\}_{i < \kappa}$ is dense in $\hat{M}_{1}$.  Once this is accomplished, the definition of $T$ ensures that the map
\[
	(c_{i}^{0})^{\hat{M}_{0}} \mapsto (c_{i}^{1})^{\hat{M}_{1}}
\]
is  a metric isomorphism between a dense subset of $\hat{M}_{0}$ and a dense subset of $\hat{M}_{1}$.  Since all the predicates in $S$ are uniformly continuous with respect to the distinguished metric, our isomorphism can be uniquely extended to a metric isomorphism between the completion of $\hat{M}_{0}$ and the completion of $\hat{M}_{1}$.  But since, by assumption, every structure is equivalent in $\mathcal L$ to its completion, the completion of  $\hat{M}_{0}$ satisfies $\gamma$, whereas the completion of $\hat{M}_{1}$ satisfies $\gamma \le r$, which contradicts the isomorphism property of $\mathcal L$ (see Definition~\ref{Definition:LogicalSystem}).

By the preceding remark, all that remains to show is that there is a model of $T$ that omits all the types
\[
	\Sigma_{\epsilon}(x) =
	 \big\{\, (P_0(x)\land d(x,c_{i}^{0}) \geq \epsilon) \, \vee \, 
	 (P_1(x)\land d(x,c_{j}^{1}) \geq  \epsilon )\, \big\}_{i,j < \kappa} \qquad (\epsilon\in\mathbb{Q}\cap(0,1)).
\]
Since $\mathcal{L}$ has the $\kappa$-Omitting Types Property, it suffices to show that $\Sigma_{\epsilon}(x)$ is not $\kappa$-principal over $T$, for each $\epsilon\in\mathbb{Q}\cap(0,1)$. Suppose that $\Sigma_{\epsilon}(x)$ is $\kappa$-principal for some $\epsilon$, and let $\Phi(x)$ generate $\Sigma_{\epsilon}(x)$ over $T$.

Fix $\delta\in\mathbb{Q}\cap(0,\epsilon)$. For $X \subseteq \kappa$ let us denote $T \upharpoonright S_{X}$ as $T_{X}$, and let
\[
	X_{0} = \{0\} \cup \big\{\, j \mid \text{$c_{j}^{0}$ or $c_{j}^{1}$ occurs in $\Phi(x)$} \,\big\}.
\]
Since $|\Phi(x)|< \kappa$ and $\mathcal{L}$ has the finite occurrence property, there is $j_{1} \in \kappa \setminus X_{0}$.  Let $X_{1} = X_{0} \cup \{j_{1}\}$.  We now use the $|\Phi|$-compactness of $\mathcal{L}$ to show that the set
\begin{equation*}
\tag{$\ddagger$}
	T_{X_{1}} \cup \Phi(x) \cup 
	\{\neg P_0(x)\lor d(x,c_{j_{1}}^{0}) \le \delta\} 
\end{equation*}
is satisfiable.  Since $\Phi(x)$ generates $\Sigma_{\epsilon}(x)$, there is an $S_{\kappa}$-structure $[M_{0}',M_{1}']$ and $a \in M_{0}'$ such that $[M_{0}',M_{1}'] \models_{\mathcal{L}} T \cup \Phi[a]$.  If 
$a\in M_1'$, the satisfiablity of ($\ddagger$) is immediate (since $\neg P_0^{[M_{0}',M_{1}']}(a)=1$), so suppose $a\in M_0'$.  Since $M_{0}{\equiv}_{\cl} M_{1}$, the argument used to prove our claim shows that
\[
	\begin{matrix}
		\vspace{\stretch{1}}  (\diamond)\vspace{\stretch{1}}

		&

		\begin{minipage}{10cm}
		for every finite set $\{\psi_{k}(\bar{x},y)\}_{k \leq m}$ of $S$-formulas of $\cl$ and every $s\in\mathbb{Q}\cap(0,1)$ there is $b$ in $M_{1}'$ such that whenever $\ell(\textbf{i}) = \ell(\bar{x})$, the structure $[M_{0}',M_{1}']$ satisfies
\[
	\bigwedge_{k \leq m} ( \psi_{k}^{0}(c_{\textbf{i}}^{0},y)[a] \lukimp ( \psi_{k}^{1}(c_{\textbf{i}}^{1},y)[b] \ge s)).
\]
		\end{minipage}
	\end{matrix}
\]

Let $\Gamma(c_{\textbf{i}}^{0},c_{\textbf{i}}^{1},c_{j_{1}}^{0},c_{j_{1}}^{1})$ be a finite subset of $T_{X_{1}}$, where $\textbf{i} \in X_{0}$ and all the new constants are being displayed, and let $S(\Gamma)$ denote the finite part of $S_{X_{0}}$ that occurs in $\Gamma$.  Notice that the reduct $[M_{0}',M_{1}'] \upharpoonright S(\Gamma)$ satisfies $\Gamma \upharpoonright S_{X_{0}}$.  Let
\[
\psi_{0}(\bar{x},y),\dots, \psi_{m}(\bar{x},y)
\]
be a list of all the $S$-formulas such that the implications
\[
\psi_{0}^{0}(c_{\textbf{i}}^{0},c_{j_{1}}^{0}) \lukimp (\psi_{0}^{1}(c_{\textbf{i}}^{1},c_{j_{1}}^{1})\ge s_0),\dots,
\psi_{m}^{0}(c_{\textbf{i}}^{0},c_{j_{1}}^{0}) \lukimp (\psi_{m}^{1}(c_{\textbf{i}}^{1},c_{j_{1}}^{1})\ge s_m)
\]
occur in $\Gamma$, let $s=\min_{k\le m}s_k$, and  fix $b\in M_{1}'$ corresponding to  $\{\psi_{k}(\bar{x},y)\}_{k \leq m}$ and $s$ as given by $(\diamond)$. Now let
\[
( [M_{0}',M_{1}'] \upharpoonright S(\Gamma), a, b)
\]
denote the expansion of $[M_{0}',M_{1}'] \upharpoonright S(\Gamma)$ to $S_{X_1}$ where $a$ is the interpretation of $c_{j_{1}}^{0}$ and $b$ is interpretation of $c_{j_{1}}^{1}$. Then, by $(\diamond)$, we have
\[
	\big( [M_{0}',M_{1}'] \upharpoonright S(\Gamma), a, b \big) \models_{\mathcal{L}} \Gamma(c_{\textbf{i}}^{0},c_{\textbf{i}}^{1},c_{j_{1}}^{0},c_{j_{1}}^{1}). 
\]
By the choice of $a$,  $[M_{0}',M_{1}'] \models_{\mathcal{L}} T \cup \Phi[a]$, and trivially, we also have
\[
( [M_{0}',M_{1}'] \upharpoonright S(\Gamma), a, b)\models_{\mathcal{L}} d(x,c_{j_{1}}^{0})[a]\le\delta.
\]
Therefore $a$ realizes
\[
\Gamma(c_{\textbf{i}}^{0},c_{\textbf{i}}^{1},c_{j_{1}}^{0},c_{j_{1}}^{1}) \cup \Phi[a] \cup 
	\{\neg P_0(x)\lor d(x,c_{j_{1}}^{0}) \le \delta\}
\]
in the structure $( [M_{0}',M_{1}'] \upharpoonright S(\Gamma), a, b \big)$. Since $\mathcal{L}$ is $|\Phi|$-compact and $\Gamma$ is an arbitrary finite subset of $T_{X_{1}}$, this shows that ($\ddagger$) is satisfiable.

Fix now $j_{2} \in \kappa \setminus X_{1}$, and let $X_{2} = X_{1} \cup \{j_{2}\}$.  An argument symmetric to that which produced a model of ($\ddagger$) shows that the theory
\begin{equation*}
	T_{X_{2}} \cup \Phi(x) \cup
	 \{\, (\neg P_0(x)\lor d(x,c_{j_{1}}^{0}) \le  \delta)
	 \land  (\neg P_1(x)\lor d(x,c_{j_{1}}^{1}) \le \delta) \,\}
\end{equation*}
is satisfied by an $S_{X_{2}}$-structure.  To conclude the proof, we only need to expand this model to an $S_{\kappa}$-structure, i.e., we need to find interpretations for the constants $c_{i}^{0},c_{i}^{1}$ with $i \in \kappa \setminus X_{2}$ in such a way that
\begin{equation*}
	T \cup \Phi(x) \cup 
	\{\, (\neg P_0(x)\lor d(x,c_{j_{1}}^{0}) \le \delta)
	 \land  (\neg P_1(x)\lor d(x,c_{j_{1}}^{1}) \le \delta) \,\}
\end{equation*}
is still satisfied; but this can be done by simply giving $c_{i}^{0},c_{i}^{1}$, for $i \in \kappa \setminus X_{2}$, the same interpretation as $c_{0}^{0},c_{0}^{1}$. Since $\delta<\epsilon$, 
\[
T \cup \Phi(x) \not\models_{\mathcal L}\Sigma_{\epsilon}(x),
\]
so $\Phi(x)$ does not generate $\Sigma_{\epsilon}(x)$, as presumed. This concludes the proof that $\Sigma_{\epsilon}(x)$ is not $\kappa$-principal over $T$, and thus the proof of the Main Theorem.
\end{proof}

The preceding proof is a refinement of the proof of the main theorem in \cite{Lindstrom:1978}.

\begin{remark}
The $\kappa$-Omitting Types Property for a theory $T$ states that for every set of at most $\kappa$ types that are not $\kappa$-principal over $T$ there is a model of $T$ that omits all the types in the set.  In the proof of Theorem~\ref{Theorem:TheMainTheorem}, we needed only a weak version of this property, namely, we need the existence of a model of $T$ that omits countable sets of types that are not $\kappa$-principal.  Thus, in basic continuous logic, the $\kappa$-Omitting Types Property is equivalent to this apparently weaker version of it.
\end{remark}

\begin{corollary} \label{C: maximality}
Let $\mathcal{L}$ be a $[0,1]$-valued logic. 
\begin{enumerate}
\renewcommand{\theenumi}{\textup{(\arabic{enumi})}}
\renewcommand{\labelenumi}{\theenumi}
\item
If $\mathcal L$  satisfies properties  \textup{(1)--(5)} of page~\pageref{L:conditions on the logic L},  $\mathcal L$ extends  {\L}ukasziewicz-Pavelka logic, and there exists an uncountable regular cardinal $\kappa$ such that $\mathcal L$ satisfies the $\kappa$-Omitting Types Property, then $\mathcal{L}$ is equivalent to {\L}ukasziewicz-Pavelka logic for  signatures of cardinality less than $ \kappa$.
\item
If $\mathcal L$  satisfies properties  \textup{(1)--(4)} of page~\pageref{L:conditions on the logic L},  $\mathcal L$ extends continuous logic, and there exists an uncountable regular cardinal $\kappa$ such that $\mathcal L$ satisfies the $\kappa$-Omitting Types Property for complete structures, then $\mathcal{L}$ is equivalent to continuous logic for  signatures of cardinality less than $\kappa$.
\end{enumerate}
\end{corollary}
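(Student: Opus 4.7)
The plan is to derive both parts of Corollary~\ref{C: maximality} by restricting the Main Theorem (Theorem~\ref{Theorem:TheMainTheorem}) to the appropriate subclass of structures: 1-Lipschitz structures for part (1) and complete structures for part (2). Recall from Subsection~\ref{subection:ContinuousLogic} that Łukasziewicz-Pavelka logic is $\cl$ restricted to 1-Lipschitz structures, and continuous logic is $\cl$ restricted to complete structures; in each case the target logic is the appropriate restriction of $\cl$. So it suffices to verify that the strategy of Theorem~\ref{Theorem:TheMainTheorem} goes through unchanged within the restricted class.

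For part (1), let $\mathcal{L}$ extend Łukasziewicz-Pavelka logic with the stated properties. The key point is that the class of 1-Lipschitz structures is closed under the constructions used in the proof of the Main Theorem: substructures, completions, and in particular the combined structure $[M_0,M_1]$ and the disjoint-union construction $\bigsqcup_{i<\lambda} M_i$ used in Proposition~\ref{Proposition:LambdaCompactness}. In all these constructions the auxiliary predicates $P$, $\triangleleft$, $R$ are discrete and hence trivially 1-Lipschitz, and the inherited predicates and operations remain 1-Lipschitz because we set them to be constant outside the relevant union. With this preservation in hand, every appeal to $\lambda$-compactness of $\mathcal{L}$ (via Proposition~\ref{Proposition:LambdaCompactness}) and to the $\kappa$-Omitting Types Property takes place inside the 1-Lipschitz class, and the argument produces a metric isomorphism between the completions of two structures $\hat M_0$ and $\hat M_1$ that are themselves 1-Lipschitz. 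This contradicts the Isomorphism Property, exactly as in the proof of the Main Theorem.

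For part (2), let $\mathcal{L}$ extend continuous logic. Here we replace Theorem~\ref{Theorem:GeneralOmittingTypesTheorem} throughout by the completion-aware Proposition~\ref{P:omitting types in complete structures}, using the notion of metrically $\kappa$-principal type of Definition~\ref{D:metrically principal types}. The two places where omitting types is invoked — the cofinality argument in Lemma~\ref{Lemma:DiscreteLinearOrderingExtension} and the omission of the types $\Sigma_\epsilon(x)$ in the endgame of Theorem~\ref{Theorem:TheMainTheorem} — both survive this replacement: in the first, the monadic predicate $P$ is discrete so that metric principality and principality coincide on the relevant type; in the second, $\Sigma_\epsilon(x)$ is already defined via strict distance bounds $d(x,c_i^k)\ge\epsilon$, so the same non-principality argument shows non-\emph{metric}-principality (picking the threshold slightly below $\epsilon$ as is already done via the parameter $\delta$ in the proof). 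The omitting-types theorem for complete structures then yields models whose completions satisfy $T$ and omit each $\Sigma_\epsilon(x)$; these complete structures play the role of $\hat{M}_0, \hat{M}_1$ directly, so property~(4) (equivalence to the completion) is not needed in this setting.

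The main obstacle in carrying out this plan is bookkeeping rather than ideas: one must check, line by line, that every auxiliary construction in the proofs of Proposition~\ref{Proposition:LambdaCompactness} and of the Main Theorem remains inside the chosen subclass (1-Lipschitz, respectively complete), and that the non-principality arguments for the types $\Sigma_\epsilon(x)$ can be phrased in the stronger metric form of Definition~\ref{D:metrically principal types} when restricting to complete structures. Once this verification is done, both parts of the corollary are direct consequences of the Main Theorem's argument scheme applied within the restricted class.
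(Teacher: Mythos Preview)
Your proposal is correct and follows essentially the same approach as the paper: observe that the two auxiliary constructions used in the proof of the Main Theorem (the disjoint union in Proposition~\ref{Proposition:LambdaCompactness} and the combined structure $[M_0,M_1]$) preserve both completeness and the 1-Lipschitz property, so the entire argument can be run within the restricted class. The paper's proof says exactly this and nothing more; your treatment of part~(2) is more careful than the paper's, since you explicitly address the passage from $\kappa$-principality to metric $\kappa$-principality (via the discreteness of $P$ in Lemma~\ref{Lemma:DiscreteLinearOrderingExtension} and the $\delta<\epsilon$ slack already present in the proof of the Main Theorem), whereas the paper leaves this verification implicit.
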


\begin{proof}
We note first that the methods used in the proof of Theorem~\ref{Theorem:TheMainTheorem} to produce new structures from old ones (i.e., the construction of $M$ from $\{M_j\}_{j<\lambda}$ in the proof of~Proposition~\ref{Proposition:LambdaCompactness} and the construction of $[M_{0},M_{1}]$ from $M_{0}$ and $M_{1}$ on page~\pageref{P:amalgamation}) yield complete structures from complete structures and 1-Lipschitz structures from 1-Lipschitz structures. Hence,  (1) and (2) above follow by assuming throughout the proof  of Theorem~\ref{Theorem:TheMainTheorem} that all the structures involved are complete, or 1-Lipschitz, accordingly.
\end{proof}

%\bibliographystyle{amsalpha}
%\bibliography{iovino}

\providecommand{\bysame}{\leavevmode\hbox to3em{\hrulefill}\thinspace}
\providecommand{\MR}{\relax\ifhmode\unskip\space\fi MR }
% \MRhref is called by the amsart/book/proc definition of \MR.
\providecommand{\MRhref}[2]{%
  \href{http://www.ams.org/mathscinet-getitem?mr=#1}{#2}
}
\providecommand{\href}[2]{#2}

\end{document}